\newcommand{\Fut}{{\rm DF}}
\newcommand{\bX}{{\bar{\mathcal{X}}}}
\newcommand{\bL}{{\bar{\mathcal{L}}}}
\newcommand{\mH}{{\mathcal{H}}}
\newcommand{\mY}{{\mathcal{Y}}}
\newcommand{\mL}{{\mathcal{L}}}
\newcommand{\mX}{{\mathcal{X}}}
\newcommand{\bM}{{\bar{\mathcal{M}}}}
\newcommand{\bPi}{{\bar{\pi}}}
\newcommand{\mop}{{\mathcal{O}_{\mathbb{P}^1}(1)}}
\newcommand{\mot}{{\mathcal{O}_{\mathbb{P}^1}(-1)}}
\newtheorem{thm}{Theorem}
\newtheorem{prop}{Proposition}
\newtheorem{defn}{Definition}
\newtheorem{cor}{Corollary}
\newtheorem{rem}{Remark}
\newtheorem{conj}{Conjecture}
\newtheorem{exmp}{Example}
\newtheorem{lem}{Lemma}
\newtheorem{claim}{Claim}
\begin{document}

\title{Special test configuration and K-stability of Fano varieties}
\author{Chi Li, Chenyang Xu}
\date{\today}
\maketitle{}

\begin{abstract}

For any flat projective family $(\mX,\mL)\rightarrow C$ such that
the generic fibre $\mX_\eta$ is a klt $\mathbb{Q}$-Fano variety and
$\mL|_{\mX_\eta}\sim_{\mathbb{Q}}-K_{X_{\eta}}$, we use the techniques from the minimal model program
(MMP) to modify the total family. The end
product is a family such that {\it every} fiber is a klt
$\mathbb{Q}$-Fano variety. Moreover, we can prove that the
Donaldson-Futaki invariants of the appearing models  {\it decrease}. When the family is a test configuration of a fixed Fano
variety $(X,-K_X)$, this implies Tian's conjecture: given $X$ a
Fano manifold, to test its K-(semi, poly)stability, we only need to test on the special
test configurations. 

\end{abstract}

\begingroup
\hypersetup{linkcolor=blue}
\tableofcontents
\endgroup

\bigskip

The paper is motivated by studying Tian's conjecture which says that to test K-(semi, poly)stability we only need to consider the test configurations whose special fibers are $ \mathbb{Q}$-Fano varieties. It  consists of two parts. In the first part, inspired by Tian's conjecture we obtain our main result on the existence of special degenerations of Fano varieties. In the second part, we apply the result from the first part to study K-stability of Fano varieties. In particular, we give an affirmative answer to Tian's conjecture. 
\part{Family of Fano Varieties}\label{part1}

Through out this part, we work over an algebraically closed field which is of characteristic 0.

\section{Introduction: main results}

\subsection{Degenerations of Fano varieties}
For various questions, especially for compactifying the moduli spaces, people are interested in the degenerations of varieties. When the varieties  have positive canonical classes, i.e., they are {\it  canonically polarized}, this question has attracted many interests. The one dimensional case, namely, the degeneration of smooth curves of genus at least 2, has been understood well after Deligne-Mumford's work \cite{DM69}.
The study of higher dimensional case by an analogous strategy  was initiated more than two decades ago ( see \cite{KSB}, \cite{Ale94}). Using the recent monumental progress on the minimal model program of \cite{BCHM} and many other work, the fundamental aspects of this theory are close to be completely settled. See Koll\'ar's survey paper \cite{Kol} for more details.  One essential point of such varieties having a nice moduli theory is that they carry  natural polarizations, namely, their canonical classes.

Another class of varieties carrying natural polarization is the
class of Fano varieties, whose canonical classes are negative. However,
such varieties behave quite differently with the canonically
polarized varieties. For example, there exists a family of Fano
manifolds, whose general  fibers are isomorphic to a given Fano
manifold, but the complex structure  jumps at the special fibers
(see e.g. \cite[Section 7]{Tian97}, \cite[2.3]{PP10}). This means that  the functor of Fano
manifolds in general is not separated. Nevertheless, even without
knowing the uniqueness we can still ask generally whether a `nice'
degeneration exists. Of course, this depends on the meaning of `nice'. In this paper, we are looking for  degenerations satisfying two properties: 

First, the degenerate fibers should be mildly singular and still with negative canonical classes. Recall that a
variety $X$ is called a {\it $\mathbb{Q}$-Fano variety} if it only
has klt singularities (see \cite{KM} for the meaning of the
terminology) and $-K_X$ is ample.   In particular, a
$\mathbb{Q}$-Fano variety is normal. This class of varieties plays a
central role in birational geometry. From many viewpoints, it has a
similar behavior as Fano manifolds. 

\begin{defn}Let $f:(\mX,\mL)\rightarrow C$ be a flat projective morphism over a smooth curve, where $\mX$ is normal and $\mL$  is an $f$-ample $\mathbb{Q}$-line bundle. We call $(\mX,\mL)\to C$ a polarized generic  $\mathbb{Q}$-Fano family if there exists a non-empty open set $C^*\subset C$, such that  for any $t\in C^*$, $ \mX_t $ is klt and $\mL|_{\mX^*}\sim_{\mathbb{Q}, C^*}-K_{\mX^*}$.  In this case, we say $C^*$ parametrizes non-degenerate fibers. If we can choose $C^*=C$, then we  call $f:(\mX,-K_{\mX/C})\rightarrow C$ a $\mathbb{Q}$-Fano family,

\end{defn}

We remark that a family being $\mathbb{Q}$-Fano is a more restrictive condition than being flat with every fiber $\mathbb{Q}$-Fano, because we also need that the canonical divisor of the global family is $\mathbb{Q}$-Cartier. 
Given a polarized generic $\mathbb{Q}$-Fano family $(\mX,\mL)\rightarrow C$, there exists a maximal open set $C^*\subset C$ parametrizing non-degenerate fibers. Conversely, given a $\mathbb{Q}$-Fano family $\mX^*$ over $C^*$ and $C^*\subset C$ where $C$ is a smooth curve,  using the properness of the Hilbert scheme, we easily see $\mX^*$ can be extended to be a generic $\mathbb{Q}$-Fano family over $C$. 

Second, we want our total family to minimize the following invariant which is motivated by the intersection number interpretation (See \cite{Wang, Oda} or Section \ref{int}) of the original DF invariant defined for a test configuration (see Definition \ref{defineTC} and Definition \ref{defineDFtc}). We refer to \cite{Fut, FS90, DT, Tian97, Dol, PauTi, PauTi2} and Remark \ref{history} in Section \ref{ss-KE} for background and history about this important invariant.

\begin{defn}[Donaldson-Futaki invariant]\label{defint} Let $\mL$ be a relative nef
$\mathbb{R}$-line bundle on $\mX$ over  a proper smooth curve $C$. We assume that there is  a non-empty open set $C^*\subset C$, such that  for any $t\in C^*$, $ \mX_t $ is klt and $\mL|_{\mX^*}\sim_{\mathbb{R}, C^*}-rK_{\mX^*}$ which is ample over $C^*$.  We
define the Donaldson-Futaki invariant (or DF invariant) to be
\begin{equation}\label{intform}
\Fut(\mX/C,\mL)=\frac{1}{2(n+1)(-K_{\mX_t})^n
}\left((n+1)K_{\mX/C}\cdot(\frac{1}{r}\mL)^n+(\frac{1}{r}\mL)^{n+1}\right).
\end{equation}
\end{defn}
 
This is proportional to the CM degree which is the degree of the CM line bundle (see e.g. \cite{PauTi2}). It is a very important invariant for studying family of Fano varieties whose positivity has not been fully understood. We refer Section \ref{ss-KE} for more background, especially its relation with the existence of K\"{a}hler-Einstein metric. 

Now we can state our main theorem, which roughly says certain `nice' degeneration exists. 

 \begin{thm}\label{main}
Assume that $(\mX, \mL)\rightarrow C$ is a polarized generic
$\mathbb{Q}$-Fano family over a smooth proper curve $C$. Let
$C^*\subset C$ parametrize non-degenerate fibers. We can construct 
a finite morphism $\phi: C'\rightarrow C$ from a non-singular curve
$C'$,  a $\mathbb{Q}$-Fano family $(\mX^{{\rm s}},\mL^{{\rm s}}:=-K_{\mX^{{\rm s}}})\to C'$
and a birational map $ \mX^{{\rm s}}\dasharrow
\mX\times_C C'$ which induces an isomorphism between the
restrictions
$$(\mX^{{\rm s}},\mL^{{\rm s}})|_{\phi^{-1} ( C^*)} \cong (\mX, \mL)|_{C^*}\times_C C' .$$
such that
\[
\Fut(\mX^{{\rm s}}/C',-K_{\mX^{{\rm s}}})\le \deg (\phi)\cdot \Fut(\mX/C,\mL)
\]
and the equality holds for the construction only if $(\mX,\mL)\to C$ is a $\mathbb{Q}$-Fano family.

\end{thm}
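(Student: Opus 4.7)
The plan is to realize $(\mX^{{\rm s}}, -K_{\mX^{{\rm s}}}) \to C'$ as the output of a relative minimal model program performed on a suitable modification of a base change of $(\mX, \mL)$, and then to track the behavior of the intersection-theoretic DF invariant step by step along each modification.

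Concretely, I would proceed as follows. First, take a finite morphism $\phi: C' \to C$ from a smooth curve, of high enough degree to admit a semistable reduction, so that the pulled back family has reduced degenerate fibers; because the intersection formula \eqref{intform} is homogeneous of degree one in the base curve, this base change multiplies $\Fut$ by exactly $\deg(\phi)$ and thus accounts for the $\deg(\phi)$ factor in the inequality. Next, pass to a $\mathbb{Q}$-factorial dlt modification $\pi: \mY \to \mX \times_C C'$ over $C'$ that is an isomorphism over $\phi^{-1}(C^*)$; this puts us in a setting where BCHM applies. Finally, run the $K_{\mY/C'}$-MMP over $C'$, scaling by $\pi^*\mL$ (or an ample divisor): by \cite{BCHM} this terminates at a relative log canonical model, and then passing to its anticanonical model produces a family $\mX^{{\rm s}} \to C'$ whose generic fiber is unchanged and whose every closed fiber is klt $\mathbb{Q}$-Fano. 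Klt-ness of the special fibers is obtained from inversion of adjunction along the central fiber of the dlt model, and the relative ampleness of $-K_{\mX^{{\rm s}}/C'}$ is guaranteed by the canonical-model step at the end.

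The technical heart of the argument is to show that $\Fut$ does not increase along any single step of this program, with equality only when that step is the identity. I would argue this using the intersection-number expression in Definition~\ref{defint}: each divisorial contraction or flip extracts an effective cycle supported on the degenerate fibers, and by the negativity lemma this cycle contributes non-positively to both $K_{\mY/C'}\cdot \mL^n$ and $\mL^{n+1}$. Telescoping over all MMP steps together with the dlt modification yields $\Fut(\mX^{{\rm s}}/C',-K_{\mX^{{\rm s}}}) \le \deg(\phi)\cdot \Fut(\mX/C,\mL)$. In the equality case every MMP step must be trivial and no nontrivial dlt modification or base change occurs, which forces $\mL \sim_{\mathbb{Q}} -K_{\mX/C}$ and klt-ness of every fiber $\mX_t$, i.e., $(\mX, \mL) \to C$ was already a $\mathbb{Q}$-Fano family.

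The main obstacle I anticipate is coordinating the two demands on the output simultaneously: klt-ness on \emph{every} closed fiber (not merely the generic one) on the one hand, and the step-by-step monotonicity of $\Fut$ with the correct signs on the other. The choice of boundary used for the dlt modification, the ordering of divisorial contractions and flips, and the correct use of the intersection formula on non-$\mathbb{Q}$-factorial models will all need to be carefully choreographed; in particular, one needs preparatory lemmas that express the change in $K_{\mX/C}\cdot\mL^n$ and $\mL^{n+1}$ under each elementary MMP operation and that identify the sign of the resulting error term with the extracted exceptional cycle.
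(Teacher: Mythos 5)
Your outline captures the general strategy (base change, modification, relative MMP, tracking the intersection-theoretic $\Fut$), but two of your key mechanisms are not the ones that actually make the argument work, and as stated they have gaps.

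First, the monotonicity claim. At a divisorial contraction or flip of the MMP with scaling, the paper shows that $\Fut$ is \emph{unchanged}, not decreased: at the critical scaling factor the relevant polarization is a pull-back from the contraction, so the projection formula gives equality. The actual decrease happens on each \emph{fixed} model, by deforming the polarization along the direction of the relative canonical class and computing the derivative of $\Fut$; the sign comes from the higher-dimensional Zariski lemma applied to $E^{2}\cdot(\text{nef})^{n-1}\le 0$ for a divisor $E$ supported in a fiber. Your proposed argument -- pushing forward a fixed $\pi^{*}\mL$ and asserting via the negativity lemma that each extracted exceptional cycle "contributes non-positively to both $K\cdot\mL^{n}$ and $\mL^{n+1}$" -- does not go through: the negativity lemma gives no such sign control on the two terms separately, and with the polarization held fixed there is no reason for $\Fut$ to drop at an MMP step. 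The whole point of the scaling construction $\mL_{\lambda}=\frac{1}{\lambda-1}(K+\lambda\mH)$ is to tie the varying polarization to the canonical class so that the derivative computation applies.

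Second, the production of a klt $\mathbb{Q}$-Fano family. Running the $K$-MMP with scaling and then taking the relative anticanonical model only yields a model $\mX^{{\rm ac}}$ with $-K_{\mX^{{\rm ac}}}\sim_{\mathbb{Q},C}\mL^{{\rm ac}}$ ample and every fiber pair \emph{log canonical}; the central fiber can still be reducible or non-klt, and no inversion of adjunction from a dlt model will upgrade it, nor will a further MMP with scaling of $-K$ produce anything new (the paper says so explicitly). The missing ingredient is the $\mathbb{Q}$-Fano extension step (Theorem 5 in the paper): after a further base change one resolves again and runs an MMP with a carefully \emph{perturbed} boundary so that exactly one component $E_{1}$ of the central fiber, chosen with $a(E_{1},\mX^{{\rm ac}})=0$, survives; dlt plus irreducibility gives plt, and adjunction gives a klt central fiber. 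The $\Fut$-comparison at this last stage is also not an MMP-step telescoping: one extracts $\mX^{{\rm s}}_{0}$ crepantly to get $\mX'$ with $\Fut(\mX'/C')=\deg(\phi)\Fut(\mX^{{\rm ac}}/C)$, and then compares $\mX'$ with $\mX^{{\rm s}}$ by showing $f(\lambda)=(-p^{*}K_{\mX'/C}+\lambda E)^{n+1}$ is nondecreasing in $\lambda$. Finally, your equality analysis ("no nontrivial base change occurs") is too strong: equality is compatible with a nontrivial base change provided the fibers are reduced; the paper's equality case is assembled from the separate equality characterizations of the three steps.
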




\subsection{Outline of the proof}

In this subsection, we explain our strategy. 
The main idea of showing Theorem \ref{main} is to modify a given a polarized generic $\mathbb{Q}$-Fano family, and then to use the intersection formula to analyze the
variation of Donaldson-Futaki invariants under modifications of the
test configurations. 

When the polarized generic $\mathbb{Q}$-Fano family arises from a test configuration, the authors in \cite{RT} and \cite{ALV} have also studied how to
simplify a given test configuration. In particular, by using the
(equivariant) semistable reduction theorem from \cite{KKMS}, it was
shown (cf. \cite{ALV}) that any K-unstable polarized variety $(X,L)$
can be destablized by a test configuration whose central fibre
$\mX^{{\rm tc}}_0$ is (reduced) simple normal crossing. In our paper, we will apply the machinery of minimal model program with scaling to modify this semistable test configuraton.
As far as we know, the idea of applying the modern birational geometry to study K-stability algebraically is first contained in Odaka's work  (see \cite{Oda1}). Here
we carry out a more refined study of the change of Donaldson-Futaki invariants   under various
birational modifications coming from MMP when $X$ is Fano.

Our first observation is that the DF invariants of the
polarizations always {\it decrease} along the direction of the
canonical class of the test configuration. Of course, when we deform
the polarization along the canonical class for enough long time, we
may hit the boundary of ample cone. Then MMP allows us to change the model and
enable us to continue the deformation. So as long as it is
before the pseudo-effective threshold,  we still get
polarizations but on some new models. In fact in birational geometry, this
process is precisely the minimal model program with scaling which is
a central theme in many recent work, e.g., \cite{BCHM}.

On the other hand, we can show if we run an MMP from the semi-stable model with the  scaling of a suitably perturbed  polarization, at the pseudo-effective threshold point we contract all but one components and end with a special test configuration. Since by our observation, the Donaldson-Futaki invariants decrease along the  sequence of MMP with scaling, this indeed finishes the proof of the $K$-semistable case.

However, to prove the $K$-stable case, we have to eliminate  the perturbation term
 appearing when we perturb the polarization.
This is more involved and therefore we have to take a more
delicate process.  In fact we have to  divide the modification process into three steps. 
\vspace{5mm}

\noindent{\bf Step 1.}
We first apply semi-stable reduction and run a relative MMP of this semi-stable family over $\mX$. Then we achieve a model $\mX^{{\rm lc}}$ which is the {\it log canonical modification}
of the base change of $(\mathcal{X},\mathcal{X}_0)$. We define the polarization $\mL^{{\rm lc}}$ on $\mX^{{\rm lc}}$ to be the sum of the pull back of the original polarization and a small positive multiple of $K_{\mX^{{\rm lc}}}$, i.e.,  the new polarization on $K_{\mX^{{\rm lc}}}$ is obtained by deforming the original one along the direction of the canonical class. Thus by our observation, we can check that the DF-invariants decrease.
We note that the idea of running the MMP by passing through
the log canonical modification is inspired by Odaka's work (see
\cite{Oda}). But here we only need to compute the derivative of the DF invariants. In Odaka's work \cite{Oda}, as he was studying the K-stability problem for general polarized varieties, a subtler estimate involving in terms of higher order was needed.

\begin{thm}\label{t-step1}
Let $(\mX,\mL)\to C$ be a polarized generic $\mathbb{Q}$-Fano family over a proper smooth curve, with $C^*\subset C$ parametrizing the non-degenerate fibers. Then we can construct  a finite morphism $\phi:C'\to C$ and a polarized generic $\mathbb{Q}$-Fano family $(\mX^{{\rm lc}},\mL^{{\rm lc}})\to {C}'$
 with the
following properties:
\begin{enumerate}
\item There is a birational morphism $\mX^{{\rm lc}}\rightarrow \mX\times_CC'$,
 which is isomorphic over $\phi^{-1}(C^*)$.
\item For every $t\in C'$,  $(\mX^{{\rm lc}},\mX^{{\rm lc}}_t)$ is log canonical.
\item There is an inequality
$$\Fut(\mX^{{\rm lc}}/C',\mL^{{\rm lc}})\le \deg (\phi) \cdot \Fut(\mX/C,\mL).$$
Moreover, the equality holds for our construction if and only if $(\mX,\mX_t)$ is log canonical for
every $t\in C$, in which case $\mX^{{\rm lc}}$ is isomorphic to $
\mX\times_{C}C'$.
\end{enumerate}
\end{thm}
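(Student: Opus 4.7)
\medskip
\noindent\textbf{Proof plan.} The plan is to implement the three-step construction outlined above: a semistable base change, the log canonical modification, and a first-order perturbation of the polarization along the canonical direction. First, by the (equivariant) semistable reduction of KKMS, choose a finite morphism $\phi: C' \to C$ and a log resolution $\tilde{\mX} \to \mX\times_C C'$ whose central fibers are reduced simple normal crossing divisors. Set $\mX' := \mX\times_C C'$ and $\mL' := \pi^*\mL$; the projection formula for DF gives $\Fut(\mX'/C', \mL') = \deg(\phi)\cdot \Fut(\mX/C, \mL)$, so the target inequality reduces to $\Fut(\mX^{{\rm lc}}/C', \mL^{{\rm lc}}) \le \Fut(\mX'/C', \mL')$.

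Next, apply the relative MMP on $(\tilde{\mX}, \tilde{\mX}_{0,{\rm red}})$ over $\mX'$ to construct the log canonical modification $\psi: \mX^{{\rm lc}} \to \mX'$; its existence is a standard consequence of finite-generation results from the MMP (compare Odaka and BCHM). By construction, $\psi$ is an isomorphism over $\phi^{-1}(C^*)$, every central fiber $\mX^{{\rm lc}}_t$ is reduced, the pair $(\mX^{{\rm lc}}, \mX^{{\rm lc}}_0)$ is log canonical, and
\[
K_{\mX^{{\rm lc}}}+\mX^{{\rm lc}}_0 \;=\; \psi^*(K_{\mX'}+\mX'_{0,{\rm red}}) - F,
\]
where $F$ is effective and $\psi$-exceptional while the left-hand side is $\psi$-ample. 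Items (1) and (2) of the theorem follow immediately.

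Now set $\mL^{{\rm lc}}(\epsilon):=\psi^*\mL' + \epsilon(K_{\mX^{{\rm lc}}}+\mX^{{\rm lc}}_0)$ for small $\epsilon>0$. The $\psi$-ampleness combined with the relative ampleness of $\mL'$ makes $\mL^{{\rm lc}}(\epsilon)$ ample over $C'$; since the correction is vertical, $\mL^{{\rm lc}}(\epsilon)$ stays proportional to $-K$ on the generic fiber. At $\epsilon = 0$, the projection formula gives $\Fut(\mX^{{\rm lc}}/C', \psi^*\mL') = \Fut(\mX'/C', \mL')$, so it suffices to verify
\[
\left.\frac{d}{d\epsilon}\right|_{\epsilon=0}\Fut(\mX^{{\rm lc}}/C', \mL^{{\rm lc}}(\epsilon)) \;\le\; 0,
\]
with strict inequality unless $F = 0$. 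Expanding (\ref{intform}), using $\mL'|_{\mX'_\eta}\sim_{\mathbb{Q}}-K$ (after normalization) to identify $\mL' + K_{\mX'} + \mX'_{0,{\rm red}}$ as a vertical divisor on $\mX'$, and substituting the displayed relation for $K_{\mX^{{\rm lc}}}+\mX^{{\rm lc}}_0$, the derivative splits into pulled-back contributions (which push down cleanly to $\mX'$ and cancel against $\Fut(\mX'/C', \mL')$) and genuinely $\psi$-exceptional terms involving $F$. The negativity lemma applied to the effective $\psi$-exceptional $F$, whose intersection pattern is pinned down by the $\psi$-ampleness of $-F$, yields the required sign; equality forces $F = 0$, which in turn forces $(\mX, \mX_t)$ to have been LC for every $t$.

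The main obstacle will be this sign analysis: one must carefully separate the intersection numbers that push down cleanly from $\mX^{{\rm lc}}$ to $\mX'$ (and thus cancel on both sides of the desired inequality) from the genuine $\psi$-exceptional contributions, and then invoke the negativity lemma on the latter. A secondary technical point is the projective existence of $\psi$, which rests on finite generation of the relative log canonical ring in the generic Fano setting.
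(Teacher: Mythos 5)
Your overall architecture (semistable base change, log canonical modification, first-order perturbation of the polarization along the canonical direction) matches the paper's, but two of your key steps have genuine gaps. First, the claim that the base change costs nothing, $\Fut(\mX'/C',\mL')=\deg(\phi)\cdot\Fut(\mX/C,\mL)$ for $\mX'=\mX\times_C C'$, is not available in the form you use it: where $\mX_0$ is non-reduced and $\phi$ ramifies, the fiber product is non-normal, and on its normalization $\tilde\mX$ one has $K_{\tilde\mX/C'}=f^*\bigl(K_{\mX/C}+({\rm red}(\mX_0)-\mX_0)\bigr)$, so the correct statement (and the one the paper proves as a Claim) is the inequality $\Fut(\tilde\mX/C',\tilde\mL)\le\deg(\phi)\cdot\Fut(\mX/C,\mL)$, strict unless $\mX_0$ is reduced. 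This strict drop at the base-change step is exactly what makes the ``equality only if $(\mX,\mX_t)$ is lc'' direction work: in your scheme a family with $(\mX,{\rm red}\,\mX_0)$ log canonical but $\mX_0$ non-reduced would have trivial log canonical modification upstairs (your $F=0$), and you would wrongly read off equality, even though $(\mX,\mX_0)$ is not lc. Your equality analysis, which rests solely on $F=0$, cannot recover the reducedness of $\mX_0$.

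Second, your displayed identity $K_{\mX^{{\rm lc}}}+\mX^{{\rm lc}}_0=\psi^*(K_{\mX'}+\mX'_{0,{\rm red}})-F$ presupposes that $K_{\mX'}+\mX'_{0,{\rm red}}$ is $\mathbb{Q}$-Cartier, which is not part of the hypotheses: the paper explicitly remarks that a polarized generic $\mathbb{Q}$-Fano family need not have $\mathbb{Q}$-Cartier canonical divisor, and moreover along the non-normal locus of $\mX'$ the relevant correction is a conductor-type divisor that is not $\psi$-exceptional, so the negativity lemma does not apply to it. The paper's Proposition on decreasing DF avoids both problems: it perturbs $\pi^{{\rm lc}*}\mL$ in the direction $E=K_{\mX^{{\rm lc}}}+\pi^{{\rm lc}*}\mL$, which is $\mathbb{Q}$-Cartier on $\mX^{{\rm lc}}$ and $\mathbb{Q}$-linearly trivial on the generic fiber (hence vertical up to $\sim_{\mathbb{Q},C}$), computes $\frac{d}{dt}\Fut=C\,(\mL_t^{{\rm lc}})^{n-1}\cdot E^2$, and gets the sign from the higher-dimensional Zariski lemma for divisors supported in fibers, with equality forcing $E\sim_{\mathbb{Q}} a\,\mX^{{\rm lc}}_0$ and hence $\mX^{{\rm lc}}\cong\tilde\mX$. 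Note also that your perturbation direction $K_{\mX^{{\rm lc}}}+\mX^{{\rm lc}}_0$ is not trivial on the generic fiber (it changes the proportionality constant $r$), and the DF invariant of Definition \ref{defint} is not invariant under adding multiples of fibers to the polarization, so the ``clean cancellation of pulled-back contributions'' you invoke would in any case require an explicit computation rather than a projection-formula appeal.
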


\vspace{5mm} \noindent{\bf Step 2.} Next, we will run the minimal
model program with scaling. 
By letting $l>0$ be a sufficiently large integer, we can assume that $\mathcal{H}^{{\rm lc}}=\mL^{{\rm lc}}-(l+1)^{-1}(K_{\mX}^{{\rm lc}}+\mL^{{\rm lc}})$ is ample.
Thus we can run $K_{\mX^{{\rm lc}}}$-MMP with scaling of
$\mathcal{H}^{{\rm lc}}$ over $C$ starting from the polarization $K_{\mX^{{\rm lc}}}+(l+1){\mathcal{H}^{{\rm lc}}}=l\mL^{{\rm lc}}$. It follows from \cite{BCHM} that the
sequence of $K_{\mathcal{X}^{{\rm lc}}}$-MMP with scaling of
$\mathcal{H}^{{\rm lc}}$ over $C$ will decrease the scaling factor until the pseudo-effective
threshold is reached. We denote its anti-canonical model by
$\mX^{{\rm ac}}$. Since this is again deforming the polarization along the
direction of the canonical class, we can also check the DF
invariants are continuously decreasing when we scale down
the scaling factor. So we have the following theorem.

\begin{thm}\label{t-step2}
Let $(\mX^{{\rm lc}},\mL^{{\rm lc}})\to C$ be a polarized generic $\mathbb{Q}$-Fano family over a proper smooth curve, with $C^*\subset C$ parametrizing the non-degenerate fibers. We assume that $(\mX^{{\rm lc}},\mX^{{\rm lc}}_t )$ is log canonical for any $t\in C$, then we can construct a polarized generic $\mathbb{Q}$-Fano family $(\mX^{{\rm ac}},\mL^{{\rm ac}})\to C$ which is isomorphic to  $(\mX^{{\rm lc}},\mL^{{\rm lc}})$ over $C^*$, such that $-K_{\mX^{{\rm ac}}}\sim_{\mathbb{Q},C}\mL^{{\rm ac}}$, $(\mX^{{\rm ac}},\mX^{{\rm ac}}_t )$ is log canonical for any $t\in C$ and
$$ \Fut(\mX^{{\rm ac}}/C,\mL^{{\rm ac}})\le \Fut(\mX^{{\rm lc}}/C,\mL^{{\rm lc}}),$$
with the equality holding for our construction if and only if $(\mX^{{\rm ac}},\mL^{{\rm ac}})\cong (\mX^{{\rm lc}},\mL^{{\rm lc}})$.
\end{thm}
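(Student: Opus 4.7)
The plan is to construct $(\mX^{{\rm ac}},\mL^{{\rm ac}})$ as the terminal model of a relative $K_{\mX^{{\rm lc}}}$-MMP with scaling of $\mH^{{\rm lc}}$ over $C$, and then to establish the DF inequality by differentiating (\ref{intform}) along the sequence. For $l \gg 0$ the class $\mH^{{\rm lc}}=\mL^{{\rm lc}}-(l+1)^{-1}(K_{\mX^{{\rm lc}}}+\mL^{{\rm lc}})$ is a small perturbation of the $C$-ample $\mL^{{\rm lc}}$ and hence itself $C$-ample, and the identity $K_{\mX^{{\rm lc}}}+(l+1)\mH^{{\rm lc}}=l\mL^{{\rm lc}}$ shows the starting polarization of the scaling MMP is $l\mL^{{\rm lc}}$. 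BCHM then provides a finite $K_{\mX^{{\rm lc}}}$-MMP with scaling of $\mH^{{\rm lc}}$ over $C$,
\[
\mX^{{\rm lc}}=\mX_0\dashrightarrow\mX_1\dashrightarrow\cdots\dashrightarrow\mX_N,
\]
with nef thresholds $l+1=\lambda_0>\lambda_1>\cdots>\lambda_N$ such that on each $\mX_i$ the divisor $K_{\mX_i}+\lambda\mH_i$ is $C$-nef exactly for $\lambda\in[\lambda_{i+1},\lambda_i]$, where $\mH_i$ is the birational transform of $\mH^{{\rm lc}}$.

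The MMP is an isomorphism over $C^*$, and on the generic fibre $(K_{\mX_i}+\lambda\mH_i)|_{\mX_\eta}=(1-\lambda)K_{\mX_\eta}$ has pseudo-effective threshold $\lambda=1$; hence $\lambda_N=1$ and the MMP terminates in a relative Mori fibre space. Since $\mX^{{\rm lc}}_\eta$ is the irreducible Fano variety, the MFS base $Y\to C$ is birational, and we take $\mX^{{\rm ac}}:=\mX_N$ with its structure morphism to $C$. Then $K_{\mX^{{\rm ac}}}+\mH_N\equiv_C 0$, so $\mL^{{\rm ac}}:=-K_{\mX^{{\rm ac}}/C}$ is a positive rational multiple of the $C$-nef $\mH_N$ and hence $C$-ample, making $(\mX^{{\rm ac}},\mL^{{\rm ac}})$ the required $\mathbb{Q}$-Fano family. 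Log canonicity of $(\mX_i,\mX_{i,t})$ at every fibre persists through flips and divisorial contractions of a $K$-MMP on an lc pair, and $\mX^{{\rm lc}}\dashrightarrow\mX^{{\rm ac}}$ is an isomorphism over $C^*$ by construction.

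For the DF inequality, on each stratum $\mX_i$ and $\lambda\in(\lambda_{i+1},\lambda_i]$ set $M_i(\lambda):=(K_{\mX_i}+\lambda\mH_i)/(\lambda-1)$, a $C$-nef class with $M_i|_{\mX_\eta}=-K_{\mX_\eta}$ (so $r=\lambda-1$ in (\ref{intform})). Differentiating (\ref{intform}) with $\mL=(\lambda-1)M_i$, and then using $K_{\mX_i}=(\lambda-1)M_i-\lambda\mH_i$ to eliminate $\mH_i$, yields the clean form
\[
\frac{d\,\Fut(\mX_i/C,(\lambda-1)M_i(\lambda))}{d\lambda}=-\frac{M_i^{n-1}\cdot(M_i+K_{\mX_i/C})\cdot(M_i+nK_{\mX_i/C})}{2\lambda(\lambda-1)(-K_{\mX_\eta})^n}.
\]
Here $(M_i+K_{\mX_i/C})|_{\mX_\eta}=0$ identifies $M_i+K_{\mX_i/C}=\tfrac{\lambda}{\lambda-1}(K_{\mX_i}+\mH_i)$ as a $C$-vertical class, while $(M_i+nK_{\mX_i/C})|_{\mX_\eta}=(n-1)K_{\mX_\eta}$. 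Combining nefness of $M_i$ with the MMP-positivity of the current step (its extremal ray is $K$-negative and $\mH$-positive) and log canonicity of $(\mX_i,\mX_{i,t})$, the three-fold intersection is $\le 0$ throughout the interval, so $d\Fut/d\lambda \ge 0$ with strict inequality unless the step is trivial. Across each transition $\lambda=\lambda_{i+1}$ the semiample divisors $K_{\mX_i}+\lambda_{i+1}\mH_i$ and $K_{\mX_{i+1}}+\lambda_{i+1}\mH_{i+1}$ pull back from the common contraction target, so the intersection numbers entering (\ref{intform}) agree and $\Fut$ is continuous. Concatenating gives $\Fut(\mX^{{\rm ac}}/C,\mL^{{\rm ac}})\le\Fut(\mX^{{\rm lc}}/C,\mL^{{\rm lc}})$, with equality only if every MMP step is trivial, i.e.\ $(\mX^{{\rm ac}},\mL^{{\rm ac}})\cong(\mX^{{\rm lc}},\mL^{{\rm lc}})$.

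The main technical obstacle is the sign analysis of $M_i^{n-1}(M_i+K_{\mX_i/C})(M_i+nK_{\mX_i/C})$ throughout each nef interval, not merely at its endpoints: one must decompose the $C$-vertical class $M_i+K_{\mX_i/C}$ along components of special fibres and use the lc-fibre hypothesis, together with nefness of $M_i$ and the extremal-ray positivity of the current MMP step, to bound pairings against $M_i+nK_{\mX_i/C}$. A secondary subtlety is verifying continuity at a flip, where the two neighbouring models are isomorphic only in codimension one; this is handled by observing that both sides pull back the same semiample divisor from the common target of the transition contraction, so the $(n+1)$-fold self-intersections and intersections with $K_{\mX/C}$ match.
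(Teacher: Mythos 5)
Your overall strategy (run the $K_{\mX^{{\rm lc}}}$-MMP with scaling of $\mH^{{\rm lc}}$, show the DF invariant is monotone in the scaling parameter on each model, and show it is unchanged at divisorial contractions and flips via pullback from the common target) is the same as the paper's, and your treatment of the transition points and of log canonicity of the fibres matches the paper's. However, there are two genuine gaps, both at points where the paper has to do real work.

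First, the construction of $(\mX^{{\rm ac}},\mL^{{\rm ac}})$. The MMP with scaling is stopped at the pseudo-effective threshold $\lambda=1$; it does \emph{not} terminate in a relative Mori fibre space, and even if one ran it below the threshold the claim that irreducibility of the Fano generic fibre forces the Mori fibration base to be birational over $C$ is false (a Fano of Picard number $>1$ admits fibrations over positive-dimensional bases, and below $\lambda=1$ the contracted rays need no longer lie over $C\setminus C^*$). Consequently your assertion that $-K_{\mX_N}$ is ``a positive rational multiple of the $C$-nef $\mH_N$ and hence $C$-ample'' is a non sequitur: at the final model one only knows $K_{\mX^k}+\lambda\mH^k$ is nef for $\lambda\in[1,\lambda_k]$. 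The paper first proves $K_{\mX^k}+\mH^k\sim_{\mathbb{Q},C}0$ by applying the Zariski-type Lemma \ref{Zariski} to this nef, fibre-supported class, deduces that $-K_{\mX^k}\sim_{\mathbb{Q},C}\mL^k_{\lambda_k}$ is big and semi-ample (relative base-point-free theorem), and then \emph{defines} $\mX^{{\rm ac}}={\rm Proj}\,R(\mX^k/C,-K_{\mX^k/C})$, transferring the DF invariant by the projection formula. Without this passage to the relative anticanonical model you do not obtain the ample polarization the statement requires.

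Second, and more seriously, the heart of the theorem---the sign of $d\Fut/d\lambda$ on each model---is exactly the step you leave open (``the main technical obstacle''), and the ingredients you propose (lc fibres, extremal-ray positivity, a component-by-component decomposition of the vertical class) are not how it is, or can easily be, done. The paper's point is that, after substituting $\frac{d}{d\lambda}\mL^i_\lambda=-\frac{1}{\lambda(\lambda-1)}(K_{\mX^i}+\mL^i_\lambda)$, the derivative becomes a \emph{negative multiple of} $(\mL^i_\lambda)^{n-1}\cdot(K_{\mX^i}+\mL^i_\lambda)^2$, i.e.\ a square of a class that is vertical (trivial on the generic fibre), so the higher-dimensional Zariski lemma (Lemma \ref{Zariski}) gives nonpositivity at once, with no use of the lc hypothesis or of the extremal ray; your three-factor expression $M^{n-1}(M+K_{\mX/C})(M+nK_{\mX/C})$ lacks this square structure, which is precisely why you cannot conclude. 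The same lemma's equality criterion is also what drives the equality case, which you assert rather than prove: the paper shows that if $\mX^0\dashrightarrow\mX^k$ is not an isomorphism then the last step is a divisorial contraction (as $-K_{\mX^k}$ is relatively nef), so $K_{\mX^0}+\mH^0\sim_{\mathbb{Q},C}E>0$ with ${\rm Supp}(E)$ a proper subset of the central fibre, and then the equality condition in Lemma \ref{FUT}/Lemma \ref{Zariski} already forces strict decrease on the first interval.
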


\vspace{5mm}
\noindent {\bf Step 3.}
At this point,  MMP of $\mX^{{\rm ac}}$ with scaling of $\mL^{{\rm ac}}\sim_{\mathbb{Q},C}-K_{\mX^{{\rm ac}}}$ will not produce any new model. 
Instead of running MMP, now we apply the technique of extending $\mathbb{Q}$-Fano 
varieties. So after possibly a base change we obtain a $\mathbb{Q}$-Fano family  $\mX^{{\rm s}}$ which is isomorphic over the base which parametrizes nondegenerate fibers. 
Furthermore, from our construction of $\mathbb{Q}$-Fano extension, we know that $\mX^{{\rm s}}_t$ is a divisor whose discrepancy with respect to $\mX^{{\rm ac}}$ is 0.

Thus we can extract $\mX^{\rm s}_t$ on $\mX^{{\rm ac}}$ to get a model ${\mX}'$ such that $-K_{{\mX}'}$ is relatively big and nef, and ${\mX}'\dasharrow \mX^{{\rm s}}$ is a birational contraction.
Since over those models whose anti-canonical class is proportional to the polarization over the base, the DF invariant is just a linear function on  the volume of the anti-canonical class whose linear term has a negative coefficient. Thus  we easily see it  decreases from ${\mX}'$ to $\mX^{{\rm s}}$. 

\begin{thm}\label{t-step3}
Let $(\mX^{{\rm ac}},\mL^{{\rm ac}})\to C$ be a polarized generic
$\mathbb{Q}$-Fano family over a proper smooth curve, where $C^{*}\subset C$ parametrizes
non-degenerate fibers. We assume that $(\mX^{{\rm ac}},\mX^{{\rm ac}}_t)$ is log canonical for any $t\in C$ and $-
K_{\mX^{{\rm ac}}}\sim_{\mathbb{Q},C}\mL^{{\rm ac}}$. Then after a finite base change
$\phi:C'\to C$, we can construct a $\mathbb{Q}$-Fano family, $(\mX^{{\rm s}},
-K_{\mX^{{\rm s}}})\to C'$ which is isomorphic to $(\mX^{{\rm ac}},\mL^{{\rm ac}})\times_C C'$ over $\phi^{-1}(C^*)$, such that
$$ \Fut(\mX^{{\rm s}}/C',-K_{\mX^{{\rm s}}})\le
\deg(\phi) \cdot \Fut(\mX^{{\rm ac}}/C,-K_{\mX^{{\rm ac}}}),$$ and the equality holds for our construction if and only if
$(\mX^{{\rm s}},-K_{\mX^{{\rm s}}})$ is a $\mathbb{Q}$-Fano family.
\end{thm}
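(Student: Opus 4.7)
The plan is dictated by the outline: since $-K_{\mX^{\rm ac}} \sim_{\mathbb{Q},C} \mL^{\rm ac}$ already, further $K$-MMP with scaling on $\mX^{\rm ac}$ produces no new model, so the DF-decreasing surgery of Steps~1--2 is unavailable. Instead I would separately produce a $\mathbb{Q}$-Fano family $(\mX^{\rm s}, -K_{\mX^{\rm s}}) \to C'$ by a $\mathbb{Q}$-Fano extension procedure, relate it to $\mX^{\rm ac}_{C'}$ via an intermediate birational model $\mX'$, and read off the DF inequality from the explicit intersection formula in the proportional case combined with a volume comparison.

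After a suitable finite base change $\phi:C'\to C$, I extend the family over $\phi^{-1}(C^*)$ to a $\mathbb{Q}$-Fano family $(\mX^{\rm s},-K_{\mX^{\rm s}})\to C'$ by invoking the paper's $\mathbb{Q}$-Fano extension theorem. The key structural output I read off is that for each degenerate $t\in C'$ the new central fibre $\mX^{\rm s}_t$ is realized by a prime divisorial valuation $E_t$ over $\mX^{\rm ac}_{C'}$ whose discrepancy with respect to $\mX^{\rm ac}$ vanishes. I then extract these divisors (a standard application of \cite{BCHM}) to produce a $\mathbb{Q}$-factorial birational model $g:\mX'\to\mX^{\rm ac}_{C'}$ whose only $g$-exceptional divisors are the $E_t$. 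Since each $E_t$ has discrepancy zero, $K_{\mX'}=g^*K_{\mX^{\rm ac}_{C'}}$, so $-K_{\mX'}$ is big and nef and
\[
(-K_{\mX'})^{n+1}\;=\;\deg(\phi)\cdot(-K_{\mX^{\rm ac}})^{n+1}.
\]
Moreover the induced rational map $\mX'\dashrightarrow\mX^{\rm s}$ is a birational contraction, collapsing the proper transforms of the old fibre components while preserving each $E_t$ as the new central fibre $\mX^{\rm s}_t$.

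To compare DF invariants, I plug the proportionality $-K\sim_{\mathbb{Q},C}\mL$ into Definition~\ref{defint}; after cancellation the formula simplifies to
\[
\Fut(\mY/B,-K_{\mY})\;=\;-\,\frac{n}{2(n+1)(-K_{\mY_b})^n}\,(-K_{\mY/B})^{n+1},
\]
in which the generic-fibre volume $(-K_{\mY_b})^n$ is unchanged by our modifications and the coefficient of the top self-intersection is negative. So the claimed DF inequality reduces to the volume comparison $(-K_{\mX^{\rm s}})^{n+1}\ge (-K_{\mX'})^{n+1}$, with equality iff $\mX'\dashrightarrow\mX^{\rm s}$ is an isomorphism. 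I would verify this on a common resolution $p:W\to\mX'$, $q:W\to\mX^{\rm s}$: expanding $q^*(-K_{\mX^{\rm s}})$ and $p^*(-K_{\mX'})$ in terms of discrepancies on $W$ and using that $-K_{\mX^{\rm s}}$ is ample while $-K_{\mX'}$ is only big and nef, the negativity lemma produces an effective $q$-exceptional divisor $F$ on $W$ with $q^*(-K_{\mX^{\rm s}})=p^*(-K_{\mX'})+F$, which yields the (strict) inequality after a binomial expansion.

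The essential obstacle is the $\mathbb{Q}$-Fano extension: producing, after finite base change, a klt $\mathbb{Q}$-Fano filling-in whose new central fibre is moreover realized as a discrepancy-zero divisor over $\mX^{\rm ac}_{C'}$. This is the one input that goes beyond the local MMP technology used in Steps~1--2 and relies on a substantive properness/boundedness result for $\mathbb{Q}$-Fano moduli together with a careful choice of the divisor to extract. By contrast, the DF/volume comparison is a routine intersection-theoretic calculation once the birational geometry is in place; the equality case then forces $F=0$, hence $\mX'\cong\mX^{\rm s}\cong\mX^{\rm ac}_{C'}$ and so $(\mX^{\rm s},-K_{\mX^{\rm s}})$ coincides with the base change of the original family.
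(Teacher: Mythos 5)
Your overall route coincides with the paper's: invoke the $\mathbb{Q}$-Fano extension theorem (Theorem \ref{t-dfano}(2)) after a finite base change, extract the discrepancy-zero divisor $\mX^{{\rm s}}_0$ to get a crepant model $\pi':\mX'\to\tilde{\mX}^{{\rm ac}}$ with $K_{\mX'}=\pi'^*K_{\tilde{\mX}^{{\rm ac}}}$, rewrite $\Fut$ in the anticanonically proportional case as a negative constant times $(-K_{/C})^{n+1}$, and compare volumes on a common resolution via $q^*(-K_{\mX^{{\rm s}}})=p^*(-K_{\mX'})+E$ with $E\ge 0$ from negativity. (Your side remark that the extension theorem rests on properness of $\mathbb{Q}$-Fano moduli is inaccurate --- the paper proves it by MMP with scaling, Proposition \ref{p-lai} --- but since you use it as a black box this is harmless; likewise only the negativity of your constant matters, the paper's own usage, consistent with Proposition \ref{Intfor}, being $-\tfrac{1}{2(n+1)(-K_{\mX_t})^n}(-K_{\mX/C})^{n+1}$.) One repair is needed in the inequality step: a literal ``binomial expansion'' in powers of $E$ does not work, since the terms $p^*(-K_{\mX'})^{n+1-k}\cdot E^k$ for $k\ge 2$ have no sign (by the Zariski-type Lemma \ref{Zariski} they tend to be nonpositive). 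The correct computation is the telescoping identity $(A+E)^{n+1}-A^{n+1}=E\cdot\sum_{j=0}^{n}(A+E)^{j}A^{n-j}$ with $A=p^*(-K_{\mX'/C})$, $A+E=q^*(-K_{\mX^{{\rm s}}/C})$, each summand being an effective divisor against a product of nef classes, hence nonnegative --- this is exactly the paper's derivative computation for $f(\lambda)=(-p^*K_{\mX'/C}+\lambda E)^{n+1}$.

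The genuine gap is the equality case. Equality of the two volumes only tells you that all the mixed numbers $E\cdot\bigl(q^*(-K_{\mX^{{\rm s}}})\bigr)^{j}\cdot\bigl(p^*(-K_{\mX'})\bigr)^{n-j}$ vanish; since $p^*(-K_{\mX'})$ is merely big and nef, this does \emph{not} force $E=0$ (an effective divisor supported in the null locus of a big and nef class produces exactly this vanishing), so your step ``the equality case then forces $F=0$'' is unjustified as stated. The paper argues the converse direction differently: if $\tilde{\mX}^{{\rm ac}}\dasharrow\mX^{{\rm s}}$ is not an isomorphism, then either it is an isomorphism in codimension one --- in which case the relative anticanonical algebras agree and taking relative ${\rm Proj}$ shows it is an isomorphism after all --- or some component $E_1\subset\mX^{{\rm ac}}_0$ is contracted; in the latter case the ampleness of $-K_{\mX^{{\rm ac}}}$ along $E_1$ forces the transform $\hat{E}_1$ to lie in ${\rm Supp}(E)$, and then $f'(0)\ge (n+1)a\,E_1\cdot(-K_{\mX^{{\rm ac}}})^{n}>0$, giving a \emph{strict} inequality. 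Some such argument (strict positivity on contracted fiber components using ampleness of $-K_{\mX^{{\rm ac}}}$, together with the ${\rm Proj}$ comparison in the small case) is indispensable, and the equality criterion is not decorative: it is what feeds into the equality analysis in the proof of Theorem \ref{main} and hence into the K-polystability statement.
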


Finally, we outline the organization of Part \ref{part1}. In Section
\ref{birational}, we review the results which we need from birational
geometry and MMP. At the end we prove Theorem \ref{t-dfano} which is
a weaker form of Theorem \ref{main} and will be needed for the proof of the general case. 
In Section \ref{lcm}, \ref{bfpsef} and
\ref{atpsef}, we prove Theorem \ref{t-step1}, \ref{t-step2} and
\ref{t-step3} respectively. At Section \ref{PET}, we put them
together to obtain Theorem \ref{main} the main theorem.

\section{MMP and birational models}\label{birational}

In this section, we give a few definitions as well as briefly introduce some preliminary results, especially the results in the minimal model program. Then we prove Theorem \ref{t-dfano}, which is can be considered as a weaker version of Theorem \ref{main}. Later this form is needed for the purpose of calculating the variance of Donaldson-Futaki invariants and thus to address the main Theorem.

\subsection{Notation and Conventions}\label{notation}

We follow \cite{KM} for the standard terminologies of birational
geometry.  In particular, see \cite[0.4]{KM} for the definitions of
general concepts and \cite[2.34, 2.37]{KM} for the definitions of
{\it klt, lc} and {\it dlt} singularities.

A smooth variety $Y$ which is flat over a smooth curve $C$ is called {\it a semi-stable family} if for any $t\in C$, $(Y,Y_t)$ is simple normal crossing.

Assume that $X$ is
proper over $S$. For any $\mathbb{Q}$-divisor $D$ on $X$, we denote $\oplus_m
f_*(\mathcal{O}_X(\lfloor m D \rfloor) )$ by $R(X/S,D)$. A $\mathbb{Q}$-Cartier $\mathbb{Q}$-divisor $L$ on
$X$ is called {\it relatively semi-ample} if for sufficiently divisible $m>0$,
$f^*f_*\mathcal{O}_X(mL)\to \mathcal{O}_X(mL)$ is surjective. 

Assume that $f:(X,\Delta)\to S$ is a log canonical pair projective over $S$, where
$K_X+\Delta$ is big and semi-ample over $S$, then we know  $ R(X/S,K_X+\Delta)$ is a finitely generated $\mathcal{O}_S$-algebra, and we define
$$Y={\rm Proj}\ R(X/S,K_X+\Delta) $$
{\it the relative log canonical model} of $(X,\Delta)$ over $S$. 
We say $X^{\rm m}$ is {\it a good minimal model} of $(X,\Delta)$ over $S$ if $h:X \dasharrow X^{\rm m}$ is a minimal model of $(X,\Delta)$ over $S$ and $K_{X^{\rm m}}+h_*\Delta$ is relatively semi-ample.

Let $(X,\Delta)$ be a normal pair,
i.e., $X$ is a normal variety and $\Delta=\sum a_i\Delta_i$ is a $\mathbb{Q}$-divisor
with distinct prime divisors $\Delta_i$ and rational numbers $a_i$.
Assume $0\le a_i \le 1$.
We say that a birational projective morphism $f: Y\to (X,\Delta)$  gives {\it the  log canonical modification} of $(X,\Delta)$  if with  the divisor $\Delta_Y=f^{-1}_*(\Delta)+E^{{\rm lc}}$ on $Y$, the pair $(Y,\Delta_Y)$ satisfies
\begin{enumerate}
\item[(1)] $(Y,\Delta_Y)$ is log canonical, and
\item[(2)] $K_Y+\Delta_Y$ is ample over $X$.
\end{enumerate}
Here $E^{{\rm lc}}$ denotes the sum of $f$-exceptional prime divisors with coefficients $1$.

Let $X\to Y$ be a dominant morphism between normal varieties. A prime divisor  $E$ of $X$ is called $horizontal$ if $E$ dominates $Y$, otherwise it is called $vertical$. Given any divisor $E$,  we can uniquely write $E=E^v+E^h$ where the horizontal part $E^h$ consists of all the horizontal components and  the vertical part $E^v$ consists of all the vertical components. 

For a model $\bullet$ (e.g., $\mX$) over $C$ and $0\in C$ a point, we denote by $\bullet_0$ its fiber over $0$.

\subsection{MMP with scaling}\label{ss-MMP}
In this subsection, we briefly introduce the concept of MMP with scaling and summarize the results which we will need later. For more details, see e.g. \cite{BCHM}.

Let $(X,\Delta)/S$ be a klt pair, which is projective over $S$. Let $H$ be an ample class on $X$.
Let $\lambda\ge 0$ be a positive number such that $K_X+\Delta+\lambda H$ is nef over $S$. For instance, we can take $\lambda \gg 0$.
Denote $(X^0,\Delta^0):=(X,\Delta)$ and $\lambda_0=\lambda$.

Suppose we have defined a klt pair $(X^{i}, \Delta^i)$ which is
projective over $S$, a $\mathbb{Q}$-divisor $H^i$ on $X^i$,  and a
positive number $\lambda_i$ such that
$K_{X^{i}}+\Delta^i+\lambda_iH^i$ is nef over $S$.  We first define
\begin{equation}\label{jumplam}
\lambda_{i+1}=\min\{\lambda | K_{X^{i}}+\Delta^i+\lambda H^{i} \mbox{ is nef over } S \}
\end{equation}
If $\lambda_{i+1}=0$, then we stop. Otherwise, suppose
we can choose a $(K_{X^i}+\Delta^i)$-negative extremal ray $R\subset {\rm NE}(X^i/S)$, with
$R\cdot  ( K_{X^{i}}+\Delta^i+\lambda_{i+1}H^{i})=0$. Assume that the
extremal contraction induced by $R$ is birational. Then we let
$X^{i+1}$ be the variety obtained by the $(K_{X^{i}}+\Delta^i)$-flip or
divisorial contraction over $S$ with respect to the curve class $R$
(cf. \cite[Section 3.7]{KM}), and $\Delta^{i+1}$ (resp. $H^{i+1}$)
the push-forward of $\Delta^i$ (resp. $H^i$) to $X^{i+1}$. It is
obvious that $\lambda_0\ge\lambda_1\ge \cdots \ge
\lambda_i\ge\cdots$. We call this sequence
$$(X^0,\Delta^0)\dashrightarrow(X^1,\Delta^1)\dashrightarrow\cdots \dashrightarrow(X^i,\Delta^i)\dashrightarrow\cdots,$$
 {\it a sequence of $(K_X+\Delta)$-MMP with scaling of $H$}, $\lambda$ {\it the scaling factor} and $H$  {\it the scaling divisor class} (or {\it the scaling divisor} for abbreviation). 

 From the definition, we know that for any $t\in[\lambda_{i+1},\lambda_{i}]$, $K_{X^i}+\Delta^i+tH^i$ is nef over $S$. Moreover, if $t\in[\lambda_{i+1},\lambda_{i})$, then $X^i$ is a relatively minimal model of  $(X, \Delta+tH)$ over $S$ because for $j\le i$, the step $X^{j-1}\dasharrow X^{j}$ of $(K_X+\Delta)$-MMP is also a step for $(K_{X}+\Delta+tH)$-MMP.

We need the following results.
\begin{thm}\label{t-mmp}With the above notations. 
Let $(X,\Delta)$ be a klt pair which is projective over $S$. There exists a finite number $i$ such that
\begin{enumerate}
\item if $\Delta$ is big and $K_X+\Delta$ is pseudo-effective  over $S$, then after  $i$ steps, $\lambda_i=0$ and $K_{X^i}+\Delta^i$ is  semi-ample over $S$;
\item if $ K_X+\Delta$ is not pseudo-effective, then after $i$ steps, $\lambda_i>0$ is equal to the pseudo-effective threshold (see \cite[1.1.6]{BCHM} for the definition) of $K_X+\Delta$ with respect to $H$, which is a rational number. Furthermore,   if we let $i$ be the smallest index such that $\lambda_i$ is equal to the pseudo-effective threshold, then $X^{i-1}$ is a good minimal model of $(X,\Delta+\lambda_iH)$ over $C$.
\end{enumerate}
\end{thm}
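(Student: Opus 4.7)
The plan is to deduce both parts from \cite{BCHM} together with the base-point-free theorem. For part (1), the triple $(X,\Delta,H)$ satisfies the hypotheses of \cite[Cor.~1.4.2]{BCHM}: $(X,\Delta)$ is klt, $\Delta$ is big, and $K_X+\Delta$ is pseudo-effective, so any $(K_X+\Delta)$-MMP with scaling of $H$ terminates after finitely many steps with $\lambda_i=0$. Since bigness of the boundary is preserved under birational contractions, at termination $(X^i,\Delta^i)$ is klt with $\Delta^i$ big and $K_{X^i}+\Delta^i$ nef, so semi-ampleness follows from the base-point-free theorem.

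For part (2), the hypotheses give $0<\mu\le\lambda_0$ immediately. The plan is to establish finite termination at $\lambda_i=\mu$ and rationality of $\mu$ together, by perturbation. Choose an effective $\mathbb{Q}$-divisor $H'\sim_{\mathbb{Q}}H$ in sufficiently general position so that $(X,\Delta+\alpha H')$ remains klt for the rational values $\alpha>0$ needed below. For any rational $\alpha>\mu$, the pair $(X,\Delta+\alpha H')$ has big boundary and pseudo-effective log canonical class, so part (1) gives finite termination of its MMP with scaling of $H$. An elementary extremal-ray comparison shows that each step of the original $(K_X+\Delta)$-MMP taken while $\lambda>\alpha$ is simultaneously a step of this auxiliary MMP, with scaling factor shifted down by $\alpha$. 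Consequently the original MMP drops $\lambda$ to the value $\alpha$ in finitely many steps. Since this is valid for every rational $\alpha>\mu$, and each $\lambda_j$ is a rational nef threshold by the rationality theorem, the sequence $\lambda_0>\lambda_1>\cdots$ must terminate at some finite index $i$ with $\lambda_i=\mu$, forcing $\mu$ itself to be rational.

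It remains to check that $X^{i-1}$ is a good minimal model of $(X,\Delta+\mu H)$ over $S$: the divisor $K_{X^{i-1}}+\Delta^{i-1}+\mu H^{i-1}$ is nef since scaling by $\lambda\in[\lambda_i,\lambda_{i-1}]$ keeps it so, and the pair $(X^{i-1},\Delta^{i-1}+\mu {H'}^{i-1})$ is klt with big boundary, so the base-point-free theorem once more yields semi-ampleness. The principal subtleties are the extremal-ray comparison in part (2) and the careful extraction of finite termination at $\mu$ from the perturbed MMPs; once these are in hand, the remaining assertions are immediate.
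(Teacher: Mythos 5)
Your part (1) is fine and is essentially the paper's own route (quote \cite[1.4.2]{BCHM}, then get semi-ampleness from the base-point-free theorem using the bigness of $\Delta^i$). The genuine problem is in part (2), at the sentence where you pass from ``for every rational $\alpha>\mu$ the original MMP performs only finitely many steps with scaling factor $>\alpha$'' to ``the sequence $\lambda_0\ge\lambda_1\ge\cdots$ must terminate at a finite index $i$ with $\lambda_i=\mu$, forcing $\mu\in\mathbb{Q}$.'' Those two inputs do not give termination: they only show that if the MMP ran forever, the scaling factors $\lambda_j$ would decrease to $\mu$. An infinite, strictly decreasing sequence of rational numbers converging to $\mu$ is perfectly possible, so the rationality of each individual nef threshold $\lambda_j$ (Kawamata's rationality theorem) excludes nothing; to argue that way you would need uniformly bounded denominators, and there is no a priori bound on the Cartier index of $K_{X^j}+\Delta^j$ along the flips. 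Moreover the step is circular as written, since the rationality of $\mu$ is being extracted from the very termination you are trying to establish.

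The fix is to perturb on the other side of the threshold, which is what the paper does: every step of the $(K_X+\Delta)$-MMP with scaling of $H$ has scaling factor $\ge\mu>\mu-\epsilon$, hence is also a step of the $(K_X+\Delta+(\mu-\epsilon)H)$-MMP with scaling of $H$; that auxiliary pair has big boundary (as $\mu>0$ and $H$ is ample) and non-pseudo-effective log canonical divisor, so \cite[1.4.2]{BCHM} asserts that any such MMP with scaling terminates, and therefore the original MMP makes only finitely many birational steps — no knowledge of the rationality of $\mu$ is needed, and that rationality is then supplied directly by \cite[1.1.7]{BCHM}. Alternatively, you can keep your scheme but first quote \cite[1.1.7]{BCHM} to know $\mu\in\mathbb{Q}$ and then take $\alpha=\mu$: the pair $(X,\Delta+\mu H')$ still has big boundary and pseudo-effective log canonical divisor (the pseudo-effective cone is closed), so your part (1) applies and bounds the number of steps with $\lambda_{j+1}>\mu$. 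With either repair, the rest of your argument — the extremal-ray comparison (which is correct, since $H^j\cdot R>0$ forces the perturbed divisor to be negative on $R$), the inequality $\lambda_j\ge\mu$ for all $j$, and the good-minimal-model verification for $X^{i-1}$ — goes through as you wrote it.
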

\begin{proof} In (2), running $(K_X+\Delta)$-MMP with scaling of $H$ is the same as running $(K_X+\Delta+(\lambda_i-\epsilon)H)$-MMP with scaling of $H$ for $0<\epsilon<\lambda_i$, therefore these statements follow from \cite[1.1.7, 1.3.3 and 1.4.2]{BCHM}.
\end{proof}

\begin{prop}\label{p-lai}
Let $(Y,\Delta_Y)$ be a klt pair projective over a smooth curve $C$ with a relative ample class $L$. We assume that
 we can write $K_{Y}+\Delta_Y+L\sim_{\mathbb{Q},C}E=E^{h}+E^{v}\ge 0$ such that  the horizontal part $E^{h}$ is exceptional for a birational morphism $Y\rightarrow X$, and the vertical part $E^{v}$ can be written as $\sum a_iE^v_i$ where $a_i>0$, and ${\rm Supp}(\sum E^{v}_i)$ does not contain any fiber.
 Then we have the following:
 \begin{enumerate}
 \item The $(K_Y+\Delta_Y+L)$-MMP with scaling of $L$ will yield a model $Y^i$ such that $K_{Y^i}+\Delta_{Y^i}+L^i\sim_{\mathbb{Q}}0$ where $\Delta_{Y^i}$ and $L^i$ are the push forward of $\Delta_Y$ and $L$ on $Y^i$.  
 \item The divisors contracted by $Y\dasharrow Y^i$ are precisely ${\rm Supp}(E)$, and $L^i$  on $Y^i$ is relatively  big and nef over $C$.
\end{enumerate}
\end{prop}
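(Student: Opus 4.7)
The plan is to run the $(K_Y+\Delta_Y+L)$-MMP with scaling of $L$ relatively over $C$, show termination at a model $Y^i$ on which $M^i:=K_{Y^i}+\Delta_{Y^i}+L^i$ is semi-ample over $C$, and then argue that the pushforward $E^i$ of $E$ vanishes identically; this yields $(1)$ together with $(2)$, since the contracted divisors then account for all of $\mathrm{Supp}(E)$. After replacing $L$ by a general element of $\frac{1}{m}|mL|$ for large divisible $m$, $(Y,\Delta_Y+L)$ remains klt, $\Delta_Y+L$ is big over $C$ (as $L$ is $C$-ample), and $K_Y+\Delta_Y+L\sim_{\mathbb{Q},C}E\ge 0$ is pseudo-effective over $C$. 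Theorem \ref{t-mmp}(1) then produces a finite MMP with scaling terminating at some $Y^i$ with $\lambda_i=0$ and $M^i$ semi-ample over $C$.

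\textbf{Contracted locus and vanishing of $E^i$.} At each intermediate step the extremal ray $R^j\in\overline{{\rm NE}}(Y^j/C)$ being contracted or flipped is vertical, so $R^j\cdot M^j=R^j\cdot E^j$; since $R^j\cdot M^j<0$ we obtain $R^j\subset\mathrm{Supp}(E^j)$, proving that $Y\dashrightarrow Y^i$ only contracts divisors inside $\mathrm{Supp}(E)$. To conclude $E^i=0$, I split $E^i=(E^h)^i+(E^v)^i$. On the generic fiber $Y^i_\eta$, the class $M^i|_{Y^i_\eta}\sim_{\mathbb{Q}}(E^h)^i|_{Y^i_\eta}$ is nef and $\mathbb{Q}$-linearly equivalent to an effective divisor exceptional for the rational map $Y^i_\eta\dashrightarrow X_\eta$; the negativity lemma \cite[3.39]{KM}, applied on a common resolution, forces $(E^h)^i|_{Y^i_\eta}=0$, hence $(E^h)^i=0$. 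Then $M^i\sim_{\mathbb{Q},C}(E^v)^i$ is $C$-semi-ample with $M^i|_{Y^i_\eta}\sim_{\mathbb{Q}}0$, so by the standard fact that a $\pi$-semi-ample class trivial on the generic fiber is pulled back from the base, $M^i=\pi^*A$ for some $\mathbb{Q}$-divisor $A$ on $C$. Comparing with $M^i\sim_{\mathbb{Q},C}(E^v)^i$ makes $(E^v)^i$ a $\mathbb{Q}$-combination of full fibers of $\pi$; the hypothesis that $\mathrm{Supp}(E^v)$ contains no fiber (which passes to $Y^i$, because any component of a fiber of $Y$ avoiding $\mathrm{Supp}(E^v)$ lies outside $\mathrm{Supp}(E)$ and is therefore not contracted) then forces $(E^v)^i=0$.

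\textbf{Bigness and nefness of $L^i$, and main obstacle.} For relative nefness of $L^i$, the penultimate MMP step yields $M^{i-1}+\lambda_{i-1}L^{i-1}$ nef on $Y^{i-1}$; pushing forward preserves nefness, so $M^i+\lambda_{i-1}L^i$ is nef on $Y^i$, and combining with $M^i\sim_{\mathbb{Q},C}0$ and $\lambda_{i-1}>0$ gives $L^i$ nef over $C$. Relative bigness of $L^i$ is inherited from relative ampleness of $L$ under the birational contraction $Y\dashrightarrow Y^i$. The main obstacle I anticipate is the horizontal-part argument: making precise that $(E^h)^i|_{Y^i_\eta}$ is genuinely exceptional for a birational contraction on the generic fiber, and applying the negativity lemma there, requires a careful simultaneous resolution that tracks exceptional divisors over $Y_\eta$, $Y^i_\eta$, and $X_\eta$, together with checking that the exceptional status survives the flips and divisorial contractions in $Y\dashrightarrow Y^i$.
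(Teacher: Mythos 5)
Your route is sound and genuinely different from the paper's at the crucial point, namely why \emph{all} of ${\rm Supp}(E)$ gets contracted. The paper observes that the relative pseudo-effective threshold of $K_Y+\Delta_Y$ with respect to $L$ is $1$, invokes Theorem \ref{t-mmp}(2) to reach a good minimal model of $K_Y+\Delta_Y+L$ over $C$, and then identifies the contracted divisors with the divisorial part of the relative stable base locus ${\bf B}(K_Y+\Delta_Y+L/C)$: the horizontal components are handled by exceptionality, and the vertical ones by Lai's result that divisors of insufficient fiber type lie in ${\bf B}_-$ (\cite[2.9, 2.10]{Lai}). You instead terminate via Theorem \ref{t-mmp}(1) at scaling factor $0$, prove the inclusion (contracted divisors) $\subseteq{\rm Supp}(E)$ by intersecting the (vertical) contracted rays with $E$, and then show the pushforward $E^i$ vanishes outright: the negativity lemma \cite[3.39]{KM} on the generic fiber for the horizontal part (nef $+$ effective $+$ exceptional over $X_\eta$ forces zero; as you note, this uses that only components of $E^h$ are contracted on the generic fiber, so no divisor of $X_\eta$ is lost on $Y^i_\eta$), and relative semi-ampleness plus triviality on the generic fiber plus the no-full-fiber hypothesis for the vertical part — where the passage from ``$\sim_{\mathbb{Q},C}$ a pullback from $C$'' to ``is an actual combination of fibers'' is exactly Lemma \ref{Zariski}, which you should cite rather than wave at. This buys independence from Lai's theorem, at the price of the fiberwise negativity argument you correctly flag as the delicate point; that argument does go through as outlined.

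The one step that would fail as written is your nefness argument for $L^i$: ``pushing forward preserves nefness'' is false in general — across a flip the pushforward of a nef (even ample) class is typically not nef, and indeed, writing $M^j=K_{Y^j}+\Delta_{Y^j}+L^j$, the class $M^i+\lambda_{i-1}L^i$ need not be nef on $Y^i$ when $\lambda_{i-1}>\lambda_i$ (the flipped ray is $L$-negative, so the class becomes negative on it for any scaling value strictly above $\lambda_i$). What is true, and is all you need, is the built-in property of MMP with scaling recorded in Subsection \ref{ss-MMP}: $M^i+tL^i$ is nef over $C$ for every $t\in[\lambda_{i+1},\lambda_i]$, where $\lambda_i>0$ is the scaling value at which the last step was performed (and if no step occurs, $L^i=L$ is already ample). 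Combined with $M^i\sim_{\mathbb{Q},C}0$, nefness of $M^i+\lambda_iL^i$ with $\lambda_i>0$ gives $L^i$ nef over $C$ — which is exactly how the paper concludes. With that substitution your proof is complete.
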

\begin{proof} From the assumption, we know that the pseudo-effective threshold of $(Y,\Delta_Y)/C$ with respect to $L$ is 1. Then by Theorem \ref{t-mmp}(2), this sequence of MMP yields a good minimal model $Y^i$ of $K_{Y}+\Delta_Y+L$ over $C$. Since $K_{Y^i}+\Delta_{Y^i}+L^i$ is semi-ample, the map $Y\dasharrow Y^i$ contracts precisely the divisorial component in ${\bf B}(K_Y+\Delta_Y+L/C)$ which is ${\rm Supp}(E)$. In fact, it is easy to see this for the components in $E^h$ since they are exceptional for a birational morphism. For $E^v$, by our assumption, it is of the insufficient fiber type (cf. \cite[2.9]{Lai}), so by \cite[2.10]{Lai} it is contained in
$${\bf B}_{-}(K_Y+\Delta_Y+L/C)  \subset{\bf B}(K_Y+\Delta_Y+L/C).$$

From the definition of MMP with scaling, we see that for any $t\in[\lambda_{i+1},\lambda_{i}]$, $K_{Y^i}+\Delta_{Y^i}+tL^i$ is nef. Since $K_{Y^i}+\Delta_{Y^i}+L^i\sim_{\mathbb{Q},C} 0$ and by our assumption $\lambda_{i}>\lambda_{i+1}=1$,  then $L^i$ is nef over $C$.
\end{proof}

\subsection{Log canonical modification and $\mathbb{Q}$-Fano
extension}\label{sub-c}
Let $f^* : \mX^*\to C^*$ be a flat projective
morphism, $\mX^*$ a klt variety, $C^*$ the germ of a smooth curve.
Let $ C$ be a smooth curve such that $C^*= C \setminus \{0\}$. Let
${\mX}$ be a normal compacitifcation of $\mX^*$ which is projective
over $C$ such that $\mX^*=\mX\times_C C^*$.  We first show a general result of the
existence of the log canonical modification for the variety fibered
over a curve. In fact, the log canonical modification is known  to exist under more general assumptions (see e.g., \cite{OX}). Here we just give a proof of the case that we need for the reader's convenience.
\begin{prop}\label{lc}
With the above notations. Assume that $(\mX,\mX_0)$ admits a log resolution
$\mY$, such that $\mY_0$ is reduced simple normal crossing. Then the
log canonical modification $\mX^{{\rm lc}}\to (\mX,\mX_0)$ exists and
satisfies $(\mX^{{\rm lc}},\mX^{{\rm lc}}_0)$ is log canonical.
\end{prop}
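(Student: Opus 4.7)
The plan is to produce $\mX^{{\rm lc}}$ as the relative log canonical (ample) model, over $\mX$, of the dlt pair $(\mY,\Delta_\mY)$, where $\Delta_\mY := g_*^{-1}\mX_0 + E^{{\rm exc}}$ is the strict transform of $\mX_0$ plus the reduced sum $E^{{\rm exc}}$ of all $g$-exceptional prime divisors, and $g:\mY\to \mX$ is the given log resolution. Since $g$ is a log resolution and $\mY_0$ is reduced SNC, the pair $(\mY,\Delta_\mY)$ is log smooth; moreover $\mY_0 \le \Delta_\mY$, because each component of $\mY_0=g^*\mX_0$ is either a component of $g_*^{-1}\mX_0$ or is $g$-exceptional, and in each case appears in $\Delta_\mY$ with coefficient $1$.

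I would now run a relative $(K_\mY+\Delta_\mY)$-MMP over $\mX$ with scaling of some $g$-ample divisor. Since $g$ is birational, $K_\mY+\Delta_\mY$ is automatically big over $\mX$, and writing $K_\mY+\Delta_\mY \sim_{\mathbb{Q},\mX} \sum c_i E_i$ (a sum over $g$-exceptional $E_i$) shows the relative pseudo-effective threshold is attained. By the dlt extension of BCHM — in practice obtained by perturbing each coefficient $1$ of $\Delta_\mY$ to $1-\epsilon$, running the resulting klt MMP by BCHM with scaling, and taking a limit — the MMP terminates at a minimal model over $\mX$ on which $K+\Delta$ is semi-ample, and the resulting relative ample model is
\[
f:\mX^{{\rm lc}}={\rm Proj}_{\mX}\, R(\mY/\mX,\, K_\mY+\Delta_\mY)\longrightarrow \mX.
\]
In a relative MMP over $\mX$, only $g$-exceptional (equivalently $f$-exceptional) divisors can be contracted, so the pushforward $\Delta_{\mX^{{\rm lc}}}$ of $\Delta_\mY$ equals $f_*^{-1}\mX_0 + E^{{\rm lc}}$ as required by the definition; moreover $K_{\mX^{{\rm lc}}}+\Delta_{\mX^{{\rm lc}}}$ is $f$-ample by construction, and the standard fact that $(K+\Delta)$-MMP preserves log canonicity shows $(\mX^{{\rm lc}},\Delta_{\mX^{{\rm lc}}})$ is lc. This verifies the two defining properties of the log canonical modification.

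Finally, for $(\mX^{{\rm lc}},\mX^{{\rm lc}}_0)$ being lc: since $C$ is smooth, $\mX_0$ is Cartier and $\mX^{{\rm lc}}_0=f^*\mX_0$. Because $\mY\dashrightarrow\mX^{{\rm lc}}$ is a sequence of flips and divisorial contractions that never introduces new prime divisors, the coefficient of $f^*\mX_0$ along any prime divisor of $\mX^{{\rm lc}}$ equals the coefficient of $g^*\mX_0=\mY_0$ along its strict transform on $\mY$, which is $1$ by hypothesis. Hence $\mX^{{\rm lc}}_0$ is reduced, and each of its components either belongs to $f_*^{-1}\mX_0$ or is a vertical $f$-exceptional divisor, so appears in $\Delta_{\mX^{{\rm lc}}}$ with coefficient $1$; in particular $\mX^{{\rm lc}}_0\le\Delta_{\mX^{{\rm lc}}}$, and log canonicity of the larger pair descends to $(\mX^{{\rm lc}},\mX^{{\rm lc}}_0)$. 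The main obstacle is the MMP step: one must justify termination and semi-ampleness for the dlt pair $(\mY,\Delta_\mY)$ over the possibly singular base $\mX$, which goes slightly beyond the klt case of BCHM and requires either the established dlt MMP results or the explicit perturbation-and-limit argument indicated above.
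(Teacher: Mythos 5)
Your overall plan is the same as the paper's: realize $\mX^{{\rm lc}}$ as the relative log canonical model over $\mX$ of $(\mY,\pi_*^{-1}\mX_0+E^{{\rm exc}})$. But the one step you flag as "the main obstacle" is precisely the mathematical content of the proposition, and your proposed fix does not close it. BCHM gives termination with scaling and semi-ampleness (equivalently, finite generation of the relative log canonical ring) for \emph{klt} pairs; your pair has coefficient-one boundary, and the "perturb each coefficient $1$ to $1-\epsilon$ and take a limit" device is not an argument: the rings $R(\mY/\mX,K_\mY+\Delta_\epsilon)$ are different rings, the $\epsilon$-ample models need not stabilize as $\epsilon\to 0$, and semi-ampleness for the coefficient-one pair is exactly the non-klt abundance/finite-generation subtlety that does not follow formally from the klt case (the general existence of lc modifications, as in the cited work of Odaka--Xu, needs genuinely more input). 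A second, related error: you write $K_\mY+\Delta_\mY\sim_{\mathbb{Q},\mX}\sum c_iE_i$ with $E_i$ exceptional, but this presupposes that $K_\mX+\mX_0$ is $\mathbb{Q}$-Cartier, which is not assumed --- indeed if it were, and the pair were lc, there would be nothing to do. (Relative bigness over the birational base is automatic, so you do not need that relation for pseudo-effectivity, but you would need something like it, over $C^*$ where $K_{\mX^*}$ \emph{is} $\mathbb{Q}$-Cartier, to see which divisors the ample model contracts; and in your last paragraph the descent from $(\mX^{{\rm lc}},\Delta_{\mX^{{\rm lc}}})$ to $(\mX^{{\rm lc}},\mX^{{\rm lc}}_0)$ silently needs $K_{\mX^{{\rm lc}}}+\mX^{{\rm lc}}_0$ to be $\mathbb{Q}$-Cartier, i.e. needs the exceptional divisors lying over $C^*$ to have been contracted.)

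The paper's proof is built to avoid the non-klt MMP entirely, using the two special features of this situation. First, $\mX^*$ is klt, so writing $\pi^*K_{\mX^*}+F^*=K_{\mY^*}+E^*$ the divisor $E$ (closure of $E^*$) has coefficients strictly less than $1$, and one forms the \emph{klt} auxiliary pair $(\mY,E+\delta G)$, where $G$ is the reduced sum of exceptional divisors with centers over $C^*$ and $0<\delta\ll1$. Second, $\mY_0$ is the pullback of the Cartier divisor $\mX_0$, so adding it changes neither the relative Proj nor finite generation. One then checks that $D=B+\pi_*^{-1}\mX_0-E-\delta G-\mY_0\ge0$ with ${\rm Supp}(D)=G$, and that $G$ is contracted by the ample model, whence $R(\mY/\mX,K_\mY+B+\pi_*^{-1}\mX_0)\cong R(\mY/\mX,K_\mY+E+\delta G+\mY_0)$; now BCHM's klt finite generation applies verbatim, and pushing forward $E+\delta G+\mY_0$ (whose image is exactly $\mX^{{\rm lc}}_0$) gives both the $\mathbb{Q}$-Cartier statement and log canonicity of $(\mX^{{\rm lc}},\mX^{{\rm lc}}_0)$ in one stroke. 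If you want to salvage your write-up, replace the perturbation-and-limit paragraph by this comparison of rings (or by an explicit appeal to the general existence theorem for lc modifications), and justify the $\mathbb{Q}$-Cartier point at the end.
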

\begin{proof} Let $\pi:\mathcal{Y}\to (\mX,\mX_0)$ be a log resolution.
If we write $B$ to be the reduced divisor ${\rm Ex}(\pi)$, it is
well-known that it suffices to show that
$(\mY,B+\pi_*^{-1}\mX_0)$ has a relative log canonical model over
$\mX$ (see \cite[Lemma 2.2]{OX}). Write
$$\pi^*K_{\mathcal{X^*}}+F^*=K_{\mathcal{Y^*}}+E^*,$$ where $F^*$, $E^*$
are effective and without common components. Let $E$ be the closure
of $E^*$ in $\mathcal{Y}$. Now we consider the pair $(\mY,E+\delta
G)$ where $G$ is the sum of the $\pi$-exceptional divisors whose centers are
over $C^*$ and $0<\delta\ll 1$ such that $(\mY,E+\delta G)$ is klt.
Then it follows from \cite{BCHM} that $R(\mY/\mX, K_{\mY}+E+\delta
G)$ is a finitely generated $\mathcal{O}_{\mX}$-algebra. By taking
${\rm Proj}$, we obtain  a model $\phi:\mathcal{Y}\dasharrow \mX^{{\rm lc}}$ over $\mX$.
The model $\mX^{{\rm lc}}$ is isomorphic to ${\rm Proj}R(\mathcal{Y}/\mX,K_{\mathcal{Y}}+E+\delta G+\mY_0)$  since $\mY_0$ is the pull back of a divisor from $\mX$.

Because
$D=B+\pi_*^{-1}\mX_0-E-\delta G-\mY_0\ge 0$ is an effective divisor, we know that
$$K_{\mY}+B+\pi_*^{-1}\mX_0-\phi^*\phi_*(K_{\mY}+E+\delta G+\mY_0)\ge K_{\mY}+B+\pi_*^{-1}\mX_0-(K_{\mY}+E+\delta G+\mY_0)\ge 0.$$
Since $\phi$ contracts  $G$ which is the same as ${\rm Supp}(D)$, we easily see this implies that  there is an
isomorphism
$$R(\mathcal{Y}/\mX, K_{\mY}+\pi_*^{-1}\mX_0+B)\cong  R(\mathcal{Y}/\mX,K_{\mathcal{Y}}+E+\delta G+\mY_0). $$
Hence we see $\mathcal{X}^{{\rm lc}}$ is indeed the log canonical modification of $(\mX,\mX_0)$ and $(\mX^{{\rm lc}},\mX^{{\rm lc}}_0)$ is log canonical as $\phi_*(E+\delta G+\mY_0)=\mX_0^{{\rm lc}}$.
\end{proof}

Next we study degenerations of Fano varieties.

\begin{exmp}[Degenerations of cubic surfaces]\label{ex-deg}
Let us consider a family of cubic surfaces:  
$$\mX=(t f_3(x,y,z,w)+sxyz=0)
\qquad \subset \qquad  \mathbb{P}(x,y,z,w)\times \mathbb{P}(s,t), 
$$
where $f_3$ (resp. $g_3$) is a general degree 3 homogeneous polynomial of $x,y,z$ and $w$ (resp. $x,y$ and $z$). Projecting to the second factor,  $\mX$ 
are families of cubic surfaces over $\mathbb{A}^1$ whose general fibers are smooth.

 Now we modify $\mX$ in the following way: First we blow up the point
 $$(0,0,0,1)\in  \mX_0=\sum^3_{i=1} E_i=(xyz=0)\subset \mathbb{P}^3$$
 to get $\mX'$ and we denote the exceptional divisor by $S_0\cong \mathbb{P}^2$. The fiber $\mX'_0$ has multiplicity 3 along $S_0$. Each birational transform of $E_i$  is isomorphic to the $\mathbb{P}^1$-bundle $\mathbb{P}(\mathcal{O}\oplus\mathcal{O}(1))$ over $\mathbb{P}^1$.
 
 Next we take a degree 3 base change $\mathbb{P}[s_1,t_1]\to \mathbb{P}[s,t]$ which sends $t$ (resp. $s$) to $t_1^3$ (resp. $s_1^3$). Let $\tilde{\mX}$ be the normalization of $\mX'\times_{\mathbb{P}^1}\mathbb{P}^1$. The pre-image $ S_1$ of $S_0$ in $\tilde{\mX}$ is the degree 3 cover branched over the intersection of $E$ and the birational transform of $\sum^3_{i=1}E_i$, which is isomorphic to $(xyz=0)\subset  \mathbb{P}(x,y,z)\cong S_0$.  Hence $S_1$ is a cubic surface with three $A_2$ singularities. We can contract the preimage of the birational transform $T$ of $\sum^3_{i=1}E_i$ and get a model $\mX^{{\rm s}}$. It is easy to see that $\mX^{{\rm s}}_0\cong S_1$.
 
\end{exmp}


This construction can be indeed generalized to arbitrary polarized generic $\mathbb{Q}$-Fano families in the following sense.

\begin{thm}[$\mathbb{Q}$-Fano extension]\label{t-dfano}Let $f: \mX \to C$ be a projective
morphism over a smooth curve $C$. Assume that $\mX^*=\mX\times_C C^*$ is a klt variety, where $C^*= C \setminus \{0\}$.
We assume that there exists an ample $\mathbb{Q}$-divisor $\mL$ on
$\mX$ such that $\mL|_{\mX^*}\sim_{\mathbb{Q},C^*}-K_{\mX^*}$ and 
$\mX^*_t$ is a $\mathbb{Q}$-Fano variety for any $t\in C^*$.
\begin{enumerate}
\item  There is a finite morphism $\phi: C'\rightarrow C$ and  a klt variety $\mX^{{\rm s}}$ projective over $C'$ such that  the restriction of $\mX^{{\rm s}}$ to the preimage $\phi^{-1}(C^*)$ is isomorphic to $\mX^*\times_C C'$ and all fibers $\mX_t$ is $\mathbb{Q}$-Fano variety. In particular, $\mX_t$ is normal.

\item Moreover, if we assume that $f^{{\rm ac}}:\mX=\mX^{{\rm ac}}\to C$ is a normal compactification such that $\mL^{{\rm ac}}=-K_{\mX^{{\rm ac}}}$ is anti-ample and for any $t\in C$, $(\mX^{{\rm ac}},\mX^{{\rm ac}}_t)$ is log canonical. We can indeed get a $\mathbb{Q}$-Fano extension $\mX^{{\rm s}}$ in (1) such that the divisor $\mX^{{\rm s}}_t$ as a valuation over $\tilde{\mX}^{{\rm ac}}:=\mX\times_C C'$ has the discrepancy $a(\mX^{{\rm s}}_t; \tilde {\mX}^{{\rm ac}})=0$.
\end{enumerate}
\end{thm}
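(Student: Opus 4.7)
The substantive content is (2); for (1), I would reduce to the hypotheses of (2) by first passing to the log canonical modification $\mX^{{\rm lc}}$ of $(\mX,\mX_0)$ (which exists by Proposition \ref{lc} after a base change so that a log resolution has reduced snc central fiber), and then running a $(K_{\mX^{{\rm lc}}})$-MMP with scaling of $\mL^{{\rm lc}}$ over $C'$ to reach a model $\mX^{{\rm ac}}$ on which $-K$ becomes proportional to the polarization; the log canonical condition on the new central fiber is preserved by such an MMP. So I focus on (2).

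The strategy for (2) is to single out one lc place of $(\mX^{{\rm ac}},\mX^{{\rm ac}}_0)$, promote it to a reduced divisor by base change, and then contract everything else using Proposition \ref{p-lai}. First I would take a log resolution $\pi:\mY\to\mX^{{\rm ac}}$ so that $\pi^{-1}(\mX^{{\rm ac}}_0)$ is snc, write $\pi^{*}\mX^{{\rm ac}}_0=\sum m_i F_i$, and choose an index $i_0$ for which $F_{i_0}$ is an lc place, i.e. $a(F_{i_0};\mX^{{\rm ac}},\mX^{{\rm ac}}_0)=-1$; such an $i_0$ exists because the pair is lc. Second I would take a base change $\phi:C'\to C$ ramified at $0$ whose degree makes the lift $\tilde F_{i_0}$ appear with multiplicity $1$ in the central fiber of the normalization $\tilde\mY$ of $\mY\times_C C'$; the ramification formula then gives $a(\tilde F_{i_0};\tilde\mX^{{\rm ac}})=0$, which is the extra conclusion of (2).

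Third I would apply Proposition \ref{p-lai} over $C'$ on $\tilde\mY$: choose a klt boundary $\Delta$ on $\tilde\mY$ putting $\sum_{i\neq i_0}\tilde F_i$ and the $\pi$-exceptional divisors other than $\tilde F_{i_0}$ into $\Delta$ with coefficients strictly less than $1$, and let $L$ be a small $C'$-ample perturbation of $-\pi^{*}K_{\mX^{{\rm ac}}}$, arranged so that $K_{\tilde\mY}+\Delta+L\sim_{\mathbb{Q},C'}E=E^{h}+E^{v}$ with $E^{h}$ exceptional for $\tilde\mY\to\tilde\mX^{{\rm ac}}$ (away from $\tilde F_{i_0}$) and $E^{v}$ supported on $\tilde\mY_0-\tilde F_{i_0}$ of insufficient-fiber type. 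Proposition \ref{p-lai} then produces a $(K_{\tilde\mY}+\Delta+L)$-MMP with scaling of $L$ that contracts precisely $\mathrm{Supp}(E)$ and terminates at a good minimal model $\mX^{{\rm s}}\to C'$ whose central fiber is the pushforward of $\tilde F_{i_0}$, hence irreducible.

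Finally I would verify that $\mX^{{\rm s}}$ is a $\mathbb{Q}$-Fano family: $-K_{\mX^{{\rm s}}}$ is $f^{{\rm s}}$-ample because the pushforward of $L$ is ample and differs from $-K_{\mX^{{\rm s}}}$ by a class that is $\mathbb{Q}$-trivial over $C'$; $\mX^{{\rm s}}$ is klt away from $\mX^{{\rm s}}_0$ since it is isomorphic there to $\mX^{{\rm ac}}\times_C C^{*}$, which is klt by hypothesis; and $\mX^{{\rm s}}_0$ is klt by inversion of adjunction, using that $(\tilde\mY,\tilde F_{i_0}+\sum_{i\neq i_0}\tilde F_i)$ is dlt at $\tilde F_{i_0}$ and this property is preserved through the MMP, making $(\mX^{{\rm s}},\mX^{{\rm s}}_0)$ plt. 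The main obstacle is the simultaneous calibration in step three: the coefficients in $\Delta$, the perturbation defining $L$, and the discrepancy contributions from both $\tilde\mY\to\tilde\mX^{{\rm ac}}$ and from $\tilde\mY_0-\tilde F_{i_0}$ must fit together so that $K_{\tilde\mY}+\Delta+L$ has \emph{exactly} the prescribed exceptional-plus-insufficient-fiber support required by Proposition \ref{p-lai}. This is where the lc (rather than merely generically klt) hypothesis on $(\mX^{{\rm ac}},\mX^{{\rm ac}}_0)$ is genuinely used, since it is what permits each $F_i$ to enter the boundary with coefficient at most $1$.
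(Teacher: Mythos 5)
Your overall strategy for (2) is essentially the paper's: after a base change, pass to a resolution, use Proposition \ref{p-lai} to run an MMP that contracts every component of the central fibre except one divisor of discrepancy zero over $\tilde{\mX}^{{\rm ac}}$, and deduce klt-ness of the new central fibre by a plt/adjunction argument; your key computation (an lc place of $(\mX^{{\rm ac}},\mX^{{\rm ac}}_0)$ becomes, after a base change making it appear with multiplicity one, a divisor $E$ with $a(E;\tilde{\mX}^{{\rm ac}},\tilde{\mX}^{{\rm ac}}_0)=-1$ and hence $a(E;\tilde{\mX}^{{\rm ac}})=0$) is exactly the identity the paper exploits in Section \ref{atpsef}. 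Where you genuinely differ is the selection mechanism: the paper perturbs the polarization by a small multiple of a relatively ample exceptional divisor and lets the unique component of minimal coefficient survive, choosing $\epsilon\ll1$ so that this component has discrepancy zero, whereas you prescribe an lc place in advance and absorb all other fibre components and all exceptional divisors into a klt boundary with coefficients in $(0,1)$. This calibration does work, precisely because the lc hypothesis forces every vertical divisor over $0$ to have nonnegative discrepancy over $\tilde{\mX}^{{\rm ac}}$, and it avoids the paper's minimal-coefficient bookkeeping; it also gives the (true, and slightly stronger) statement that any chosen lc place can be made the central fibre. Your reduction of (1) to (2) via the constructions of Theorems \ref{t-step1} and \ref{t-step2} is not circular, since those results do not use Theorem \ref{t-dfano}, but note the paper's proof of (1) is lighter: it runs the Proposition \ref{p-lai} MMP directly from a semistable log resolution of the original $\mX$, with no need for the log canonical modification.

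Two steps need repair. First, the output of Proposition \ref{p-lai} is only a good minimal model: the pushed-forward polarization, equivalently $-K$, is relatively big and nef but in general not ample, so your assertion that ``$-K_{\mX^{{\rm s}}}$ is $f^{{\rm s}}$-ample because the pushforward of $L$ is ample'' is false as written; you must add the step $\mX^{{\rm s}}:={\rm Proj}\,R(\mX^{{\rm m}}/C',-K_{\mX^{{\rm m}}})$, as the paper does, and then carry the plt property from $(\mX^{{\rm m}},\mX^{{\rm m}}_0)$ to $(\mX^{{\rm s}},\mX^{{\rm s}}_0)$. Second, because you resolve first and base change afterwards, your $\tilde{\mY}$ (the normalization of $\mY\times_C C'$) is in general only toroidal rather than smooth, so the claimed dlt-ness of $(\tilde{\mY},\sum_i\tilde F_i)$, which you need for the final plt conclusion, is not immediate; either perform the base change first and take a semistable log resolution of the normalized base change (the paper's order of operations), or insert a further toroidal resolution before running the MMP. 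Both are local repairs that leave your architecture intact.
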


\begin{proof}Let $\phi: C'\to C$ be a finite base change such that $ \mX\times_{C} C' $ yields a semi-stable log resolution $\pi:\mY \to \mX\times_{C} C'$. We can assume that $\mY$ yields an exceptional divisor $A$ which is $\pi$-ample.  Let $\pi^*$ be the restriction of $\pi$ over ${C}^*$. By abuse of notation, we will identify $C=C'$ and $\mX^*=\mX\times_{C}C'^*$.

Write $\pi^*(K_{\mX^*})+F^*=K_{\mY^*}+\Gamma^*$, where $\Gamma^*,F^*>0$ are
effective divisors without common components, by our assumption the
coefficients of $\Gamma^*$ are less than 1. Let $\Gamma,F$ be the closures of
$\Gamma^*$ and $F^*$. Let $\epsilon$ be a sufficiently small positive
number such that $\mL_{\mY}:=\pi^*\mL+\epsilon A$  is ample.  We
write $A=A_1+A_2$, such that $A_2$ precisely consists of the
vertical components which are over $0 $. By perturbing $\mL_\mY$ and
reordering the components, we can assume that if we write
$$K_{\mY}+\mL_{\mY}+\Gamma\sim_{\mathbb{Q}, C}(\epsilon A_1 +F)+(\epsilon A_2 +B)$$
with $B$ supported on the fibers $\mY_{0}=\sum E_{j}$ and
$B+\epsilon A_2= \sum^{k}_{j=1} a_{j}E_{j},$  then $a_{j} >a_{1}$ if
$j>1$, where $E_j$ ($1\le j\le k$) are all the components of $\mY_0$.

 Let $G$ be the sum of the prime divisors which are $\pi$-exceptional divisors whose centers are in $\mX^*$.
 By choosing $\epsilon \ll \delta\ll 1$, we assume that  $\delta G+\epsilon A_1\ge 0$ and its support is equal to $G$. We run $(K_{\mY}+\mL_{\mY}+\Gamma+\delta G)$-MMP  over ${\mX}$ with  scaling of $\mL_{\mY}$, by adding multiples of fibers, we can also assume that $a_1=0$ in the above formula.
Because
$$(K_{\mY}+\mL_{\mY}+\Gamma+\delta G)\sim_{\mathbb{Q}, C} (\delta G+\epsilon  A_1 +F)+(\epsilon A_2+B-a_1\mY_0)\ge 0,$$
whose support restricting on $\mY^*$ contains all the exceptional
divisors for $\mY^* \to \mX^*$, we can apply Proposition \ref{p-lai}
and conclude this sequence of MMP terminates with a model $\mX^{{\rm m}}$
satisfying that $K_{\mX^{{\rm m}}}+\mL^{{\rm m}}\sim_{\mathbb{Q},C} 0$, the only
remaining component over $0$ is the birational transform of $E_1$
and  $\mL^{{\rm m}}$ is big and nef. Thus  we can define  $\mX^{{\rm s}}$ to be
${\rm Proj}R(\mX^{{\rm m}}/C,-K_{\mX^{{\rm m}}})$. Over $C^*$, since
$\mY^*\dasharrow \mX^{{\rm m}*}$ contracts the same components as the ones
of $\mY^*\to \mX^*$, thus $\mX^{{\rm m}*}(:=\mX^{\rm m}\times_C C^*)\dasharrow \mX^*$ is isomorphic in
codimension 1. Hence we see that
$$\mX^*={\rm Proj}R(\mX^*/C^*, \mL|_{\mX^*})\cong {\rm Proj}R(\mX^{{\rm m}*}/C^*, \mL^{{\rm m}}|_{\mX^{{\rm m}*}})=\mX^{{\rm s}*}.$$

Representing $\mL_{\mY}$ by a general $\mathbb{Q}$-divisor, we can
assume that $({\mY}, \Gamma+\delta G+\mY_{0}+\mL_{\mY})$ is dlt, The MMP
sequence is also  a sequence of $(K_{\mY}+\Gamma+\delta
G+\mY_{0}+\mL_{\mY})$-MMP, thus  $(\mX^{\rm m},\mX^{\rm m}_{0}+\mL^{\rm m})$ is dlt. This implies that $(\mX^{\rm m}, \mX^{\rm m}_{0})$ is
dlt since $\mX^{\rm m}$ is $\mathbb{Q}$-factorial. As $\mX^{\rm m}_0$ is irreducible, this indeed says
$(\mX^{{\rm m}},\mX^{{\rm m}}_{0})$ is plt and so $(\mX^{{\rm s}},\mX^{{\rm s}}_{0})$ is plt. By adjunction, we know $\mX^{{\rm s}}_0$ is klt. 
This finishes the proof of (1).

For (2), we apply the same line of argument.  We first choose  ${\mX}=\mX^{{\rm ac}}$. Then we know that we can write $K_{\mathcal{Y}}+\Gamma=\pi^*(K_{\mX^{{\rm ac}}})+F+B$, where $B$ is over $\{0\}$. Since $(\mX^{{\rm ac}}, \mX^{{\rm ac}}_0)$ is log canonical, $\mX^{{\rm ac}}$ is canonical along $\mX^{{\rm ac}}_0$. So $B\ge 0$,  whose support is the union of those divisors $E_{j}\subset \mathcal{Y}_0$ such that $a(E_{j}, \mX^{{\rm ac}})>0$. Now we have
$$K_{\mY}+\mL_{\mY}+\Gamma+\delta G\sim_{\mathbb{Q}, {\mX}} B+F+\epsilon A_1+\epsilon A_2+\delta G,$$
whose vertical part over $\{0\}$ is $B+\epsilon A_2$. Thus by choosing $0<\epsilon \ll 1$, we can assume that after
a small suitable perturbation, the divisor  $E_1$ having the  smallest coefficient
$a_1$ is not contained in ${\rm Supp}(B)$, i.e., it satisfies that $a(E_1, \mX^{{\rm ac}})=0$. Then from the proof of
(1), $\mX^{\rm m}_0$ will be the birational image of such $E_1$.
\end{proof}

As investigated in \cite{Kol07, HoXu}, the component $E_1$ constructed in this way has many interesting geometric properties. Since any $\mathbb{Q}$-Fano variety is rationally connected (see \cite{Zhang06}), the argument  which we just presented indeed gives a new proof of \cite[Theorem 3.1]{HoXu} which was originally obtained by applying Hacon-M$^c$Kernan's extension theorem as in \cite{HM07}.

\subsection{Zariski lemma }
Using this intersection formula, in the following work we need the higher
dimensional analogue of the Zariski's lemma for surfaces.
\begin{lem}\label{Zariski}
Let $\mX \to C$ be a projective dominant morphism from a
$n$-dimensional normal variety to a proper smooth curve. Let $E$ be
a $\mathbb{Q}$-divisor whose supports is contained in a fiber $\mX_0$. Let
$\mL_1,..,\mL_{n-2}$ be $n-2$ nef divisors on $\mX$. Then
$$E^2\cdot \mL_1\cdots \mL_{n-2}\le 0.$$
If all $\mL_i$'s are ample, then
 the equality holds if and only if $E =t\mX_{0}$ for some $t\in
\mathbb{Q}$.
\end{lem}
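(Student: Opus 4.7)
The plan is to adapt the classical proof of Zariski's lemma on surfaces by replacing the intersection pairing $D_1\cdot D_2$ with the twisted $\mathbb{Q}$-bilinear form
\[
q(D_1,D_2):=D_1\cdot D_2\cdot \mL_1\cdots \mL_{n-2}
\]
on $\mathbb{Q}$-Cartier divisors supported in $\mX_0$. After replacing $\mX$ by a $\mathbb{Q}$-factorialization (the $\mL_i$ stay nef and the intersection $E^2\cdot\mL_1\cdots\mL_{n-2}$ is preserved by the projection formula), I may write $\mX_0=\sum_{i=1}^k n_i E_i$ with integer multiplicities $n_i>0$ and $E=\sum_{i=1}^k a_i E_i$ with each prime component $E_i$ now $\mathbb{Q}$-Cartier.

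The argument then rests on two observations: (a) $q(\mX_0,E_i)=0$ for every $i$, because $\mX_0$ is numerically equivalent to any other fiber $\mX_t$ and $\mX_t\cap E_i=\emptyset$ for $t\neq 0$; and (b) $q(E_i,E_j)\geq 0$ for $i\neq j$, because $E_i\cdot E_j$ is an effective codimension-two cycle (the restriction $E_i|_{E_j}$) and each $\mL_k$ is nef. Setting $r_i=a_i/n_i$, an elementary expansion using only (a) gives the identity
\[
-2q(E,E)=\sum_{i,j}(r_i-r_j)^2 n_i n_j\, q(E_i,E_j),
\]
whose right-hand side is non-negative by (b); hence $E^2\cdot \mL_1\cdots \mL_{n-2}=q(E,E)\leq 0$.

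For the equality case with the $\mL_i$ all ample: equality forces $(r_i-r_j)^2 q(E_i,E_j)=0$ for every pair. When $E_i\cap E_j\neq \emptyset$ with $i\neq j$, the intersection $E_i\cdot E_j$ is a non-zero effective cycle of pure codimension two (by the Krull principal ideal theorem applied to the $\mathbb{Q}$-Cartier prime divisors $E_i,E_j$ on the normal variety $\mX$), and such a cycle pairs strictly positively with the $(n-2)$-fold product of ample classes; thus $q(E_i,E_j)>0$ for all such pairs, forcing $r_i=r_j$ whenever $E_i$ meets $E_j$. Connectedness of the fiber $\mX_0$ (which may be arranged by Stein factorization) then propagates this equality through the dual graph of its components, forcing all the $r_i$ to coincide and giving $E=r\mX_0$ for some $r\in\mathbb{Q}$. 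The only delicate point in the plan is the strict positivity $q(E_i,E_j)>0$ when $E_i\cap E_j\neq\emptyset$, which reduces to the standard fact that $n-2$ ample classes pair strictly positively with any non-zero effective cycle of dimension $n-2$.
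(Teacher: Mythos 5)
Your central computation is the classical Zariski convexity identity and is correct as far as it goes: granting that every component $E_i$ is $\mathbb{Q}$-Cartier, the relations $q(\mX_0,E_i)=0$ and $q(E_i,E_j)\ge 0$ for $i\neq j$ do yield $-2q(E,E)=\sum_{i,j}(r_i-r_j)^2n_in_j\,q(E_i,E_j)\ge 0$. This is a genuinely different route from the paper, which instead perturbs the nef $\mL_i$ to ample, takes them very ample, adds a multiple of the fiber so that $E\ge 0$ and its support does not contain ${\rm Supp}(\mX_0)$, cuts by $n-2$ general members of $|\mL_i|$, and then quotes Zariski's lemma for the resulting normal surface, using Mumford's intersection theory there precisely because $E$ need not be $\mathbb{Q}$-Cartier.

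However, your opening reduction is a genuine gap. A small $\mathbb{Q}$-factorialization of an arbitrary normal variety need not exist: such modifications are known for klt (and suitable log canonical) singularities via \cite{BCHM}, but the lemma is stated for any normal $\mX$. The failure is clearest when $n=2$: a small projective birational morphism between normal surfaces is an isomorphism, so a $\mathbb{Q}$-factorialization exists only if $\mX$ is already $\mathbb{Q}$-factorial --- and the non-$\mathbb{Q}$-factorial surface case is exactly the one for which the paper must invoke Mumford's pairing. There is also a definitional circularity: if $E$ is not $\mathbb{Q}$-Cartier, the number $E^2\cdot\mL_1\cdots\mL_{n-2}$ that you propose to ``preserve by the projection formula'' is not defined on $\mX$ by ordinary intersection theory in the first place; the paper's cutting-plus-Mumford procedure is what gives it meaning. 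Finally, even when a small $\mathbb{Q}$-factorialization $f:\mX'\to\mX$ does exist, your equality argument breaks upstairs: the classes $f^*\mL_i$ are only nef and big, so $q(E_i',E_j')$ may vanish for strict transforms that meet --- an $(n-2)$-dimensional component of $E_i'\cap E_j'$ can lie in the $f$-exceptional locus and be contracted, and $f$ may even separate $E_i$ and $E_j$ --- so strict positivity along edges of the dual graph, and hence the propagation of $r_i=r_j$ across the connected fiber, is not guaranteed without further argument. (A smaller caveat, shared with the paper's formulation: one needs $\mX_0$ connected, and passing to the Stein factorization changes which divisor plays the role of $\mX_0$ in the equality statement.)
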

\begin{proof}When $n=2$, this is the well-known Zariski lemma (see e.g. \cite[III.8.2]{BHPV}). We note that since $E$ is not necessarily $\mathbb{Q}$-Cartier, here we have to use the intersection theory on normal surfaces developed by Mumford (see \cite{Mum}).
 For $n>2$, as a nef bundle is the limit a sequence of ample $\mathbb{Q}$-line bundles, we can assume all $\mL_i$ are ample $\mathbb{Q}$-line bundles. Replacing $\mL_i$ by a multiple, we assume $\mL_i$ to be very ample. Adding a suitable multiple of the fiber, we can assume that $E\ge 0$ and its support does not contain ${\rm Supp}(\mX_0)$. Therefore, we can cut $\mX$ by $n-2$ general sections in $|\mL_i|$ $(1\le i \le n-2)$ and reduce the question to the case when $n=2$.
\end{proof}


\section{Decreasing of DF invariant for the log canonical modification}\label{lcm}
 Let $(\mX,\mathcal{L})\rightarrow C$ be a
polarized generic $\mathbb{Q}$-Fano family.  It is easy to see all
our operations will be local over $C$, so to simplify the notation, without loss of generality
we will just denote one degenerate fiber to be $\mX_0$ and argue in
a neighborhood of it.

In this section, we aim to verify Theorem \ref{t-step1}.  In fact,
we will calculate the DF invariants on the log canonical
modification $\mX^{{\rm lc}}\to \mX$. We start from the line bundle
$\pi^{{\rm lc}*}\mL$ which is the pull back of the polarization on $\mX$,
whose DF invariant is equal to the original one. Since
$K_{\mX^{{\rm lc}}}$ is relative ample, if we deform $\pi^{{\rm lc}*}\mL$ along
the direction $ K_{\mX^{{\rm lc}}}$ sufficiently small amount then we get
an ample bundle on $\mX^{{\rm lc}}$. As this is a deformation along the
canonical class, we can show the DF invariants decrease
along this deformation.

\begin{prop}\label{P-DEC}
With the above notations. If $\mX^{{\rm lc}}$ is not isomorphic to $\mX$,
then we can choose a polarization $\mL^{{\rm lc}}$ on $\mX^{{\rm lc}}$ such that
$$\Fut(\mX^{{\rm lc}}/C,\mL^{{\rm lc}} )< \Fut(\mX/C,\mL).$$
\end{prop}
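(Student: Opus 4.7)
The plan is to take $\mL^{{\rm lc}}=\mL^{{\rm lc}}_{\epsilon}:=\pi^{{\rm lc}*}\mL+\epsilon K_{\mX^{{\rm lc}}/C}$ for a sufficiently small $\epsilon>0$, i.e.\ deform the pulled-back polarization in the direction of the canonical class. Since the log canonical modification forces $K_{\mX^{{\rm lc}}}+\mX^{{\rm lc}}_0$ to be $\pi^{{\rm lc}}$-ample and $\mX^{{\rm lc}}_0=(\pi^{{\rm lc}})^{*}\mX_0$ is $\pi^{{\rm lc}}$-numerically trivial, $K_{\mX^{{\rm lc}}/C}$ is itself $\pi^{{\rm lc}}$-ample, and adding a small positive multiple to the $f^{{\rm lc}}$-nef pullback $\pi^{{\rm lc}*}\mL$ produces an $f^{{\rm lc}}$-ample $\mathbb{Q}$-line bundle on $\mX^{{\rm lc}}$. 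Over the non-degenerate locus $\pi^{{\rm lc}}$ is an isomorphism, so $\mL^{{\rm lc}}_{\epsilon}|_{\mX^{{\rm lc}}_t}\sim_{\mathbb{Q}}-(r-\epsilon)K_{\mX^{{\rm lc}}_t}$ there and the data defines a polarized generic $\mathbb{Q}$-Fano family with normalizing constant $r_\epsilon=r-\epsilon$.

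Next I would expand $\Fut(\mX^{{\rm lc}}/C,\mL^{{\rm lc}}_\epsilon)$ in $\epsilon$. Setting $L=\pi^{{\rm lc}*}\tilde\mL$, $K=K_{\mX^{{\rm lc}}/C}$, $M=K+L$ (automatically vertical on $\mX^{{\rm lc}}$, since $(K+\tilde\mL)|_{\mX^*_t}\sim 0$), and $\beta=\epsilon/(r-\epsilon)$, the normalized polarization is exactly $\mL^{{\rm lc}}_\epsilon/r_\epsilon=L+\beta M$, and the numerator of the DF invariant takes the compact form
\[
G(\epsilon)=(L+\beta M)^{n}\cdot\bigl[(n+1+\beta)M-nL\bigr].
\]
The projection formula identifies $G(0)$ with $2(n+1)(-K_{\mX_t})^{n}\cdot\Fut(\mX/C,\mL)$, and the linear term in $\beta$ is
\[
G_1=(n+1)\,ML^{n-1}(nK+L)=(n+1)\bigl[n\,M^{2}L^{n-1}-(n-1)\,ML^{n}\bigr].
\]
Using the decomposition $M=\pi^{{\rm lc}*}V+E$, with $V=K_{\mX/C}+\tilde\mL$ vertical on $\mX$ and $E$ the $\pi^{{\rm lc}}$-exceptional part of $K_{\mX^{{\rm lc}}/\mX}$, the projection formula further gives $ML^{n}=V\tilde\mL^{n}$ and $M^{2}L^{n-1}=V^{2}\tilde\mL^{n-1}+E^{2}L^{n-1}$, with each squared-vertical summand controlled by Zariski's lemma (Lemma \ref{Zariski}).

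The main obstacle will be promoting these non-strict Zariski inequalities to $G_1<0$. A pushforward calculation shows that $M=t\mX^{{\rm lc}}_0$ would force both $V=t\mX_0$ and $E=0$; outside of this proportional case $M$ is a genuinely non-fiber vertical class on $\mX^{{\rm lc}}$, so strict Zariski negativity should hold. Since $L=\pi^{{\rm lc}*}\tilde\mL$ is only nef, I plan to perturb by a small $\delta K_{\mX^{{\rm lc}}/C}$ to obtain the ample $L+\delta K$, apply the strict form of Zariski's lemma there, and track the limit $\delta\to 0^{+}$ (reducing if necessary to a generic two-dimensional complete intersection of ample classes and invoking the negative-definiteness of the intersection form on a nonzero divisor exceptional for a birational morphism of surfaces). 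In the residual configurations where the leading term nonetheless vanishes, I would inspect the full Taylor expansion $G(\epsilon)=\sum_{k=0}^{n+1}\binom{n+1}{k}M^{k}L^{n-k}[(n+1-k)K+L]\,\beta^{k}$ and combine the $\pi^{{\rm lc}}$-ampleness of $K$ with Zariski-type bounds to locate and sign the first non-vanishing coefficient, which together with the non-triviality of $\pi^{{\rm lc}}$ imposed by $\mX^{{\rm lc}}\not\simeq\mX$ should produce the required strict decrease.
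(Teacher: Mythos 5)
Your overall strategy (deform the pulled-back polarization along the canonical direction and control the variation of $\Fut$ by Zariski's lemma) is the paper's, but the execution breaks down exactly at the strictness step, and the gap is genuine. You expand at $\epsilon=0$, so the first-order coefficient is an intersection number against the merely nef class $L=\pi^{{\rm lc}*}\mL$; the equality case of Lemma \ref{Zariski} requires \emph{ample} classes, and your claim that ``outside the proportional case strict Zariski negativity should hold'' is false at $L$. Concretely, if the log canonical modification only extracts divisors whose centers in $\mX$ have codimension $\ge 2$, then for any vertical $D$ supported on exceptional divisors the projection formula gives $L^{n-1}\cdot D^2=\mL^{n-1}\cdot \pi^{{\rm lc}}_{*}(D\cdot D)=0$, since the pushed-forward $(n-1)$-cycle dies; so the linear term can vanish even though $\mX^{{\rm lc}}\not\cong\mX$, and your $\delta$-perturbation/limit argument cannot restore strictness (limits of strictly negative numbers can be $0$). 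Your fallback --- ``locate and sign the first non-vanishing coefficient'' of the Taylor expansion --- is precisely the missing content: the higher coefficients involve $L^{n-1-k}\cdot M^{2+k}$ with $k\ge 1$, i.e.\ powers of the vertical class beyond the square, and Zariski's lemma gives no sign control on those individually, so no mechanism is provided. A secondary flaw: your decomposition $M=\pi^{{\rm lc}*}(K_{\mX/C}+\tilde\mL)+E$ presupposes that $K_{\mX}$ is $\mathbb{Q}$-Cartier, which is not part of the hypotheses (a polarized generic $\mathbb{Q}$-Fano family only assumes $\mX$ normal); the paper deliberately never pulls back $K_{\mX}$.

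The way the paper closes the gap is to avoid expanding at $0$ altogether: set $E=K_{\mX^{{\rm lc}}}+\pi^{{\rm lc}*}\mL$ (which is $\mathbb{Q}$-linearly equivalent over $C$ to a divisor supported in degenerate fibers, and is $\pi^{{\rm lc}}$-ample since $K_{\mX^{{\rm lc}}}$ is, using that $(\mX^{{\rm lc}},\mX^{{\rm lc}}_0)$ is lc so $\mX^{{\rm lc}}_0$ carries the lc-modification boundary), and differentiate $\Fut(\mX^{{\rm lc}}/C,\pi^{{\rm lc}*}\mL+tE)$ at an interior point $t_0\in(0,\epsilon)$, where $\mL^{{\rm lc}}_{t_0}$ is already ample. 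The derivative collapses, because $\mL^{{\rm lc}}_{t_0}+K_{\mX^{{\rm lc}}}\sim_{\mathbb{Q},C}(1+t_0)E$, to a positive constant times $(\mL^{{\rm lc}}_{t_0})^{n-1}\cdot E^2$, and now Lemma \ref{Zariski} applies in its strict form: equality would force $E$ to be a rational multiple of the fiber, hence $K_{\mX^{{\rm lc}}}\sim_{\mathbb{Q},\mX}0$ while being $\pi^{{\rm lc}}$-ample, making $\pi^{{\rm lc}}$ an isomorphism, contrary to hypothesis. So the derivative is strictly negative on all of $(0,\epsilon)$ and integrating gives the strict inequality, with $\Fut$ at $t=0$ equal to $\Fut(\mX/C,\mL)$ by the projection formula. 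You should replace your Taylor-coefficient analysis by this interior-point derivative computation; as written, your argument does not prove the strict inequality.
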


\begin{proof}
By definition, $K_{\mX^{{\rm lc}}}$  is $\pi^{{\rm lc}}$-ample.
We choose the relatively $\pi^{{\rm lc}}$-ample $\mathbb{Q}$-divisor
$$E=K_{\mX^{{\rm lc}}}+\pi^{{\rm lc}*}(\mL).$$ Then $E$ is
$\mathbb{Q}$-linearly equivalent to a divisor whose support is contained in
$\mathcal{X}^{{\rm lc}}_0$. Since for sufficiently small rational $\epsilon$,
$\mL^{{\rm lc}}_{t}=\pi^{{\rm lc}*}\mL+t E$ is ample for any $0<t<\epsilon$, we see that
$(\mX^{{\rm lc}},\mL^{{\rm lc}})\to C$ is also a polarized generic
$\mathbb{Q}$-Fano family.

Using the formula, we compute the derivative at $t_0\in (0,\epsilon)$:
$$\frac{d}{dt}\Fut(\mX^{{\rm lc}}/C ,\mathcal{L}_{t}^{{\rm lc}})|_{t_0}=n(n+1)C_0\cdot\left((\mathcal{L}_{t_0}^{{\rm lc}})^n\cdot E+K_{\mX^{{\rm lc}}}\cdot(\mL_{t_0}^{{\rm lc}})^{n-1}\cdot E\right)$$
$$=C_1 \cdot (\mL_{t_0}^{{\rm lc}})^{n-1}\cdot E\cdot \left(\mL_{t_0}^{{\rm lc}}+K_{\mX^{{\rm lc}}} \right)=C_1\cdot (\mL^{{\rm lc}}_{t_0})^{n-1}\cdot E^2,
$$
where $C_0$ and $C_1$ are positive numbers. By Lemma
\ref{Zariski}, the intersection $(\mL^{{\rm lc}}_{t_0})^{n-1}\cdot E^2 \le 0$
and it is zero if and only if $E=K_{\mX^{{\rm lc}}}+\pi^{{\rm lc}*}\mL$
is $\mathbb{Q}$-linearly equivalent to $a\mX_0^{{\rm lc}}$ for some $a$.
But this implies that $\mX^{{\rm lc}}\cong \mX$ since $K_{\mX^{{\rm lc}}}\sim_{\mathbb{Q},\mX}{E}$ is
$\pi^{{\rm lc}}$-ample.
\end{proof}

\begin{proof}[Proof of Theorem \ref{t-step1}]

First we can take the base change $\mX\times_C C'$ such that its
normalization $\tilde{\mX}$ admits a
semi-stable reduction $\mY$. In particular $\tilde{\mX}_0$ is
reduced. Let $\phi_{\mX}: \tilde{\mX}\rightarrow \mX$ be the natural
finite morphism and $\tilde{\mL}=\phi_{\mX}^*\mL$. We first note that
\begin{claim}
$$\deg(C'/C)\cdot\Fut(\mX/C,\mL)\ge
\Fut(\tilde{\mX}/C',\tilde{\mL}).$$
Furthermore, the equality holds
if and only if $\mX_0$ is reduced.
\end{claim}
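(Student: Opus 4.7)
The plan is to compute both sides of the inequality from the intersection formula \eqref{intform} and identify their difference as an explicit non-negative quantity. Write $d=\deg(C'/C)$, $\phi_\mX:\tilde{\mX}\to\mX$ for the induced finite morphism, and $\pi:\tilde{\mX}\to C'$. Since $\phi_\mX$ has degree $d$ and $\tilde{\mL}=\phi_\mX^*\mL$, the projection formula immediately gives $\tilde{\mL}^{n+1}=d\cdot\mL^{n+1}$, $\phi_\mX^*K_{\mX/C}\cdot\tilde{\mL}^{n}=d\cdot K_{\mX/C}\cdot\mL^n$, and $(-K_{\tilde{\mX}_t})^n=(-K_{\mX_t})^n$ for $t$ in the étale locus of $\phi$. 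Hence the whole comparison reduces to controlling the difference $\phi_\mX^*K_{\mX/C}-K_{\tilde{\mX}/C'}$.

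Combining the Hurwitz identities $K_{\tilde{\mX}}=\phi_\mX^*K_\mX+R$ and $K_{C'}=\phi^*K_C+R_\phi$ yields
$$\phi_\mX^*K_{\mX/C}-K_{\tilde{\mX}/C'}=\pi^*R_\phi-R,$$
where $R_\phi$ is the ramification divisor of $\phi$ and $R$ the codimension-one ramification divisor of $\phi_\mX$. To identify the right-hand side I carry out the standard local calculation at a generic point of each component $E_i\subset\mX_0$ of multiplicity $m_i$. Writing $t=x^{m_i}$ étale-locally on $\mX$ and $t=s^d$ on $C'$, and setting $g_i=\gcd(d,m_i)$, $d=g_id_i'$, $m_i=g_im_i'$, the equation $s^d=x^{m_i}$ factors into $g_i$ normal branches, each parametrized by a uniformizer $u$ with $s=u^{m_i'}$ and $x=u^{d_i'}$ up to units. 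Thus above $E_i$ there are $g_i$ components $\tilde{E}_i^{(j)}$, each appearing with multiplicity $m_i'$ in $\tilde{\mX}_{0'}$ and with ramification index $d_i'$ under $\phi_\mX$, so the coefficient of $\tilde{E}_i^{(j)}$ in $\pi^*R_\phi-R$ equals
$$(d-1)m_i'-(d_i'-1)=d_i'(m_i-1)-(m_i'-1)\ge 0,$$
with equality if and only if $m_i=1$. Consequently $\pi^*R_\phi-R$ is an effective vertical divisor supported exactly on the components lying over the non-reduced $E_i$'s.

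Substituting these facts into \eqref{intform} gives
$$\Fut(\tilde{\mX}/C',\tilde{\mL})=d\cdot\Fut(\mX/C,\mL)-\frac{(\pi^*R_\phi-R)\cdot\tilde{\mL}^n}{2r^n(-K_{\mX_t})^n}.$$
The subtracted term is non-negative because $\pi^*R_\phi-R$ is effective and supported in the fibre $\tilde{\mX}_{0'}$, while $\tilde{\mL}$ is $\pi$-ample (pullback of an $f$-ample class under the finite map $\phi_\mX$), whence $\tilde{E}_i^{(j)}\cdot\tilde{\mL}^n=(\tilde{\mL}|_{\tilde{E}_i^{(j)}})^n>0$ for every component. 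It vanishes precisely when all the coefficients above vanish, i.e.\ when every $m_i=1$, which is exactly the condition that $\mX_0$ be reduced. The main technical point is the local normalization computation at generic points of the non-reduced components; once the coefficient formula is in hand, the rest is routine manipulation of the intersection formula.
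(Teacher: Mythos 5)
Your argument is correct and is essentially the paper's: the identity $\phi_{\mX}^*K_{\mX/C}-K_{\tilde{\mX}/C'}=\pi^*R_\phi-R$, which your componentwise computation shows equals the effective vertical divisor $\phi_{\mX}^*\bigl(\mX_0-\mathrm{red}(\mX_0)\bigr)$, is exactly the log pull-back formula $K_{\tilde{\mX}}+\tilde{\mX}_0=\phi_{\mX}^*(K_{\mX}+\mathrm{red}(\mX_0))$ that the paper invokes, and both proofs then conclude by the projection formula and ampleness of $\mL$. (Note only that your ``equality iff $m_i=1$'' for the local coefficient uses that the base change is genuinely ramified over $0$; this is automatic here because $C'\to C$ is chosen so that $\tilde{\mX}_0$ is reduced, forcing $m_i\mid d$ with $d>1$ whenever some $m_i>1$.)
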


Indeed, by the pull-back formula for the log differential, we have $K_{\tilde{\mX}}+\tilde{\mX}_0=f^*(K_{\mX}+{\rm
red}(\mX_0))$ and $K_{C'}+\{0'\}=\phi^*(K_C+\{0\})$.  So
\[
K_{\tilde{\mX}/C'}=f^*(K_{\mX/C}+({\rm
red}(\mX_0)-\mX_0))
\]
and the claim follows from the projection formula.

Now it follows from Proposition \ref{lc} that the log canonical
modification $\pi^{{\rm lc}}: \mX^{{\rm lc}}\to \tilde{\mathcal{X}}$ exists and satisfy
that $\pi^{{\rm lc}}$ is an isomorphism over $\phi^{-1}(C^*)$.

And then Proposition \ref{P-DEC} shows that
$$\Fut(\tilde{\mX}/C',\tilde{\mL})\ge
\Fut(\mX^{{\rm lc}}/C',\mL^{{\rm lc}}).$$

If $(\mX,\mX_0)$ is log canonical, then $\mX_0$ is reduced
and $(\tilde{\mX},\tilde{\mX}_0)$ is log canonical (cf.
\cite[5.20]{KM}), which implies $\mX^{{\rm lc}}\cong \tilde{\mX}$, therefore the equality holds.

Conversely, $\deg(\phi)\cdot\Fut(\mX/C,\mL)=
\Fut(\mX^{{\rm lc}}/C',\mL^{{\rm lc}})$ is equivalent to saying the above two
inequalities are indeed equalities. By Proposition 
\ref{P-DEC} and the above claim, this
holds only if $\mX_0$ is reduced and $\mX^{{\rm lc}}\cong \tilde{\mX}$ which implies $(\tilde{\mX},\tilde{\mX}_0)$
is log canonical. Since
$$\phi_{\mX}^*(K_{\mX}+\mX_0)=K_{\tilde{\mX}}+\tilde{\mX}_0,$$
 it follows that $({\mX}, {\mX}_0)$ is also log canonical (see \cite[5.20]{KM}).
\end{proof}


\section{MMP with scaling}\label{bfpsef}
In this section, we aim to prove Theorem \ref{t-step2}. We will
apply the idea that the Donaldson-Futaki invariants decrease if we deform the polarization along the direction
of the canonical class of the total family in `a long time' process. To keep the deformed line
bundle being a polarization, we have to do a sequence of surgeries
on the family. In algebraic geometry, this surgery is given by the MMP with scaling
(see \cite{BCHM} and Subsection \ref{ss-MMP}).

\subsection{Running MMP}

By taking $l>0$ to be a sufficiently large integer, we can make 
$\mathcal{H}^{\rm lc}=\mL^{\rm lc}-(l+1)^{-1}(\mL^{\rm lc}+K_{\mX}^{\rm lc})$ ample.
Let $\lambda_0=l+1$. We let $\mX^{0}=\mX^{{\rm lc}}$,
$\mL^0=\mL^{{\rm lc}}$ and $\mathcal{H}^0=\mathcal{H}^{{\rm lc}}$.  Then
$K_{\mX^0}+\lambda_0\mH^0=l \mL^{\rm lc}$ is relatively ample.


Given an exceptional divisor $E$, if its center dominates $C$ then
$a(E,\mX^{0})>-1$ because $\mX^*$ is klt; if its center is vertical
over $C$, then $a(E,\mX^{0})\ge 0$, since $(\mX^0,\mX^0_t)$ is log
canonical for any $t$ in $C$. In particular, $\mX^0$ is klt.  To
simplify the family, we run a sequence of $K_{\mX^0}$-MMP over $C$
with scaling of $\mH^0$ as in Subsection \ref{ss-MMP}. So we obtain a sequence of models
$$\mX^0\dashrightarrow \mX^1\dashrightarrow \cdots \dashrightarrow
\mX^k.$$ Recall that, as in Subsection \ref{ss-MMP}, we have a
sequence of critical value of scaling factors
\[
\lambda_{i+1}=\min\{\lambda\; |\; K_{\mX^{i}}+\lambda\mH^i \mbox{ is
nef over } C \}
\]
with $l+1=\lambda_0\ge \lambda_1\ge ... \ge
\lambda_k>\lambda_{k+1}=1$. Note that $\lambda_{k+1}=1$ is
the pseudo-effective threshold of $K_{\mX^0}$ with respect to
$\mH^0$ over $C$, since it is the pseudo-effective threshold for the
generic fiber. Any $\mX^i$ appearing in this sequence of
$K_{\mX^0}$-MMP with scaling of $\mH^0$ is  a relative weak log
canonical model of $(\mX^0,t\mH^0)$ for any $t\in [\lambda_i,\lambda_{i+1}]$ (see \cite[3.6.7]{BCHM} for the
definition of weak log canonical model).

For $\lambda>1$, we denote by
\begin{equation}\label{nvarp}
\mL_\lambda=\frac{1}{\lambda -1}(K_{\mX^0}+\lambda\mH^0).
\end{equation}
Let $\mL^{i}_\lambda$ (resp. $\mH^{i} $) be the push forward
of  $\mL^0$ (resp. $\mH^0$) to $\mX^i$.  As is clear from the
context, this should not be confused with the $i$-th power or
intersection product of $\mL_\lambda$ (resp. $\mH$). We note that by
definition $\mL_{l+1}^0=\mL^0.$

Given a $\lambda$, we will be interested in those $i$ which satisfy
that $\lambda_{i}\ge \lambda\ge \lambda_{i+1}$. Note that
\begin{equation}\label{ltoh}
K_{\mX^{i}}+\mL^{i}_\lambda=\frac{\lambda }{\lambda
-1}\left(K_{\mX^{i}}+\mH^i\right)
\end{equation}

\begin{lem}
$-K_{\mX^k}\sim_{\mathbb{Q},C}\mL^k_{\lambda_k}$ is big and
semi-ample over $C$.
\end{lem}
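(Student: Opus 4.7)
The plan is to identify $-K_{\mX^k}$ with $\mL^k_{\lambda_k}$ up to $\mathbb{Q}$-linear equivalence over $C$ first, and then separately check bigness and semi-ampleness. Since the generic fiber is klt Fano and $\mL|_{\mX^*}\sim_{\mathbb{Q},C^*}-K_{\mX^*}$, we have $\mH^0_\eta\sim_{\mathbb{Q}}-K_\eta$; in particular $K_{\mX^0}$ is not pseudo-effective over $C$, and the pseudo-effective threshold of $K_{\mX^0}$ with respect to $\mH^0$ is $1=\lambda_{k+1}$. Thus Theorem \ref{t-mmp}(2) applies and says that $\mX^k$ is a good minimal model of $(\mX^0,\mH^0)$ over $C$; in particular $K_{\mX^k}+\mH^k$ is semi-ample over $C$.

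Next I would observe that $(K_{\mX^k}+\mH^k)|_\eta\sim_{\mathbb{Q}}0$, so the relative Iitaka dimension of the semi-ample divisor $K_{\mX^k}+\mH^k$ is zero. Consequently a sufficiently divisible multiple of it is pulled back from $C$, which gives
\[
K_{\mX^k}+\mH^k\sim_{\mathbb{Q},C}0.
\]
Substituting into (\ref{ltoh}) with $\lambda=\lambda_k$ yields
\[
K_{\mX^k}+\mL^k_{\lambda_k}=\tfrac{\lambda_k}{\lambda_k-1}\bigl(K_{\mX^k}+\mH^k\bigr)\sim_{\mathbb{Q},C}0,
\]
proving $-K_{\mX^k}\sim_{\mathbb{Q},C}\mL^k_{\lambda_k}$.

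It remains to show bigness and semi-ampleness of $-K_{\mX^k}$ over $C$. For bigness, the restriction of $-K_{\mX^k}$ to the generic fiber is $-K_{\mX^k_\eta}$, and $\mX^k_\eta$ is a klt Fano (it is isomorphic to $\mX^*_\eta$ since the MMP is over $C$ and an isomorphism in codimension one on $\mX^*$), so $-K_{\mX^k_\eta}$ is ample and $-K_{\mX^k}$ is big over $C$. For nefness, I would use that $K_{\mX^k}+\lambda_k\mH^k$ is nef by construction of MMP with scaling and, combined with $\mH^k\sim_{\mathbb{Q},C}-K_{\mX^k}$ from the step above, this equals $(\lambda_k-1)(-K_{\mX^k})$; since $\lambda_k>\lambda_{k+1}=1$ by the strict inequality at termination, $-K_{\mX^k}$ is nef over $C$.

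Finally, $\mX^k$ is klt because $\mX^0=\mX^{{\rm lc}}$ is klt (as noted in the text just before the MMP is run) and klt is preserved by $K$-MMP. The relative base-point-free theorem therefore applies to the nef and big $\mathbb{Q}$-divisor $-K_{\mX^k}$ on the klt variety $\mX^k$ over $C$, giving semi-ampleness. The main obstacle is really the middle step, ensuring that the semi-ample class $K_{\mX^k}+\mH^k$ is not merely numerically zero but $\mathbb{Q}$-linearly trivial over $C$; everything else is either a direct unpacking of the definitions in Subsection \ref{ss-MMP} or a standard invocation of the base-point-free theorem.
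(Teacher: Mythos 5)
Your proof is correct, and its overall skeleton matches the paper's (identify $\mL^k_{\lambda_k}$ with $-K_{\mX^k}$ up to $\sim_{\mathbb{Q},C}$, then conclude semi-ampleness from nef $+$ big $+$ klt via the relative base-point-free theorem), but the key middle step is handled by a genuinely different mechanism. The paper notes that $K_{\mX^k}+\mL^k_{\lambda_k}$, being proportional to $K_{\mX^k}+\mH^k$, is nef over $C$ and trivial on the generic fiber, hence $\mathbb{Q}$-linearly equivalent to a vertical divisor, and then uses the equality case of the Zariski-type Lemma \ref{Zariski} to force that divisor to be a multiple of the fiber; this only uses nefness and an elementary intersection-theoretic lemma the paper needs anyway for the DF monotonicity. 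You instead invoke Theorem \ref{t-mmp}(2) to get that $\mX^k$ is a \emph{good} minimal model of $(\mX^0,\mH^0)$ over $C$, so $K_{\mX^k}+\mH^k$ is relatively semi-ample, and then use the standard fact that a relatively semi-ample divisor which is $\mathbb{Q}$-trivial on the generic fiber is pulled back from the base. That is legitimate within the paper's framework (the paper itself cites Theorem \ref{t-mmp}(2) to identify $\mX^k$ as a minimal model of $(\mX^0,\mH^0)$ later on), but it leans on the full BCHM-powered "good" part of that statement, whereas the paper's route is cheaper at this point. Your treatment of bigness also differs slightly (restriction of $-K_{\mX^k}$ to the generic Fano fiber, using that the MMP is trivial over $C^*$, versus the paper's $K_{\mX^k}+\lambda_k\mH^k=(K_{\mX^k}+\mH^k)+(\lambda_k-1)\mH^k$ with $\lambda_k>1$); both are fine, and your explicit derivation of relative nefness of $-K_{\mX^k}$ from nefness of $K_{\mX^k}+\lambda_k\mH^k$ is a point the paper leaves implicit.
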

\begin{proof}
Since $\lambda_k>\lambda_{k+1}=1$, by \eqref{ltoh},
$$K_{\mX^k}+\mL^{k}_{\lambda_{k}}\sim_{\mathbb{Q}}\frac{\lambda_{k}}{\lambda_{k}-1}\left(K_{\mX^{k}}+\mH^{k}\right).$$
This line bundle is relatively nef over $C$ and its restriction to
the generic fiber is trivial, so it is $\mathbb{Q}$-linearly
equivalent to a linear sum of components of $\mX_0^k$. By its
nefness, we can apply Lemma \ref{Zariski} to get
$$K_{\mX^k}+\mL^k_{\lambda_k}\sim_{\mathbb{Q},
C} 0.$$ By \eqref{nvarp}, $\mL^k_{\lambda_k}$ is proportional to
$K_{\mX^k}+{\lambda_k}\mH^k$ which is big because
$\lambda_k>1$. From the relative base-point free theorem
(cf. Theorem 3.3 in \cite{KM}), it is semi-ample over $C$.
\end{proof}

By the above Lemma, we can define
$$\mX^{{\rm ac}}={\rm Proj} R(\mX^k/C, \mL^k_{\lambda_k})={\rm Proj} R(\mX^k/C, -K_{\mX^k/C}).$$
Since $(\mX^0,\mX_0^0)$ is log canonical and $\mX_0^0=(f \circ
\pi^{{\rm lc}})^*(\{ 0\})$, this is a also a sequence of
$(K_{\mX^0}+\mX_0^0)$-MMP and thus $(\mX^{k},\mX_0^{k})$ is log
canonical which implies that $(\mX^{{\rm ac}},\mX_0^{{\rm ac}})$ is log
canonical as well.

\subsection{Decreasing of DF-invariant}

For any $\lambda>1$, the restriction of
$K_{\mX^0}+\lambda\mathcal{H}^0$ over $C^*$ is relatively ample. So the MMP
with scaling does not change $\mX^{0}\times_C C^*$, i.e.,
$\mX^{0}\times_C C^*\cong \mX^{i}\times_C C^*$ for any $i\le k$.

Note that by the above lemma and projection formula,
\[
\Fut(\mX^k/C,\mL_{\lambda_k}^k)=\Fut(\mX^k/C,
-K_{\mX^k})=\Fut(\mX^{{\rm ac}}/C, -K_{\mX^{{\rm ac}}}).
\]
So Theorem \ref{t-step2} follows from the following Proposition.
\begin{prop}\label{P-DFMMP}With the notations as above, we have
$$
\Fut(\mX^0/C,\mL^0)\ge
\Fut(\mX^k/C,\mL^{k}_{\lambda_k})=\Fut(\mX^k/C, -K_{\mX^k}).
$$
The first equality holds if and only if $h:\mX^0\dashrightarrow
\mX^k$ is an isomorphism.
\end{prop}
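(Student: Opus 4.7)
The plan is to interpolate between $\mX^0$ and $\mX^k$ via the one-parameter family $\mL^i_\lambda$ as $\lambda$ decreases from $\lambda_0 = l+1$ to $\lambda_k$, switching models at each critical value $\lambda_i$. The argument decomposes into two pieces: continuity of $\Fut$ across each birational transition $\mX^{i-1} \dashrightarrow \mX^i$ at $\lambda = \lambda_i$, and monotonicity of $\Fut(\mX^i/C, \mL^i_\lambda)$ in $\lambda$ on a fixed model $\mX^i$. Note that $\Fut$ is scale-invariant in $\mL$, and because $\mH^0|_\eta \sim_\mathbb{Q} -K_\eta$ one checks by induction along the MMP that $\mL^i_\lambda|_{\mX_\eta} \sim_\mathbb{Q} -K_{\mX_\eta}$ for all $i$ and $\lambda > 1$, so we may normalize so that $r=1$ throughout.

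For the continuity at $\lambda = \lambda_i$: by definition of $\lambda_i$, the class $K_{\mX^{i-1}} + \lambda_i \mH^{i-1}$ pairs to zero with the extremal ray contracted by $\mX^{i-1} \dashrightarrow \mX^i$, so $\mL^{i-1}_{\lambda_i}$ descends from the common contraction (or flipping) base $Y$, and the corresponding descent on $\mX^i$ is $\mL^i_{\lambda_i}$. By the projection formula applied on a common resolution,
\[ K_{\mX^{i-1}/C} \cdot (\mL^{i-1}_{\lambda_i})^n = K_{\mX^i/C} \cdot (\mL^i_{\lambda_i})^n,\qquad (\mL^{i-1}_{\lambda_i})^{n+1} = (\mL^i_{\lambda_i})^{n+1}, \]
and therefore $\Fut(\mX^{i-1}/C, \mL^{i-1}_{\lambda_i}) = \Fut(\mX^i/C, \mL^i_{\lambda_i})$.

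For the monotonicity on a fixed $\mX^i$, set $F^i := K_{\mX^i} + \mH^i$, which is vertical over $C$ since $\mH^i|_\eta \sim -K_\eta$. A direct manipulation of \eqref{nvarp} gives
\[ \mL^i_\lambda - \mL^i_{\lambda_i} = \frac{\lambda_i - \lambda}{(\lambda-1)(\lambda_i-1)} F^i,\qquad K_{\mX^i} + \mL^i_\lambda = \frac{\lambda}{\lambda - 1} F^i, \]
so $\mL^i_\lambda$ varies linearly along the vertical class $F^i$, and $K_{\mX^i} + \mL^i_\lambda$ is itself a scalar multiple of $F^i$. This is exactly the structural input driving the proof of Proposition \ref{P-DEC}: reproducing that derivative calculation yields $\tfrac{d}{d\lambda}\Fut(\mX^i/C, \mL^i_\lambda)$ as a positive constant times $(\mL^i_\lambda)^{n-1} \cdot (F^i)^2$, which by Lemma \ref{Zariski} is nonpositive (with equality only if $F^i \equiv_C t_i \mX^i_0$). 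Telescoping the monotonicity on each subinterval with the continuity at each $\lambda_i$ gives the inequality. For the equality case, global equality forces $F^i \equiv_C t_i \mX^i_0$ on every $\mX^i$, hence $K_{\mX^i} + \lambda\mH^i \equiv_C (\lambda - 1)\mH^i$ is relatively ample for every $\lambda > 1$, so no extremal ray is ever contracted and $h: \mX^0 \dashrightarrow \mX^k$ must be an isomorphism. The main technical obstacle is reproducing the P-DEC-style derivative computation uniformly in $\lambda$: the algebraic identities above deliver the clean factorization into $(\mL^i_\lambda)^{n-1}(F^i)^2$, and what remains is the bookkeeping that verifies the constant in front has the correct sign across the whole range $\lambda \in (1, \lambda_i]$.
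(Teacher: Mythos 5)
Your decomposition of the inequality is exactly the paper's: invariance of $\Fut$ across each divisorial contraction or flip because both polarizations descend from the common base (projection formula), plus monotonicity on each fixed model $\mX^i$ because $\mL^i_\lambda$ moves along the vertical class $F^i=K_{\mX^i}+\mH^i$ and the derivative is governed by $(\mL^i_\lambda)^{n-1}\cdot (F^i)^2$ via Lemma \ref{Zariski}; this is Subsections \ref{Non-in} and \ref{Inv-CF}, telescoped in the same way. One sign needs fixing, though: as written you assert $\frac{d}{d\lambda}\Fut(\mX^i/C,\mL^i_\lambda)$ is a \emph{positive} multiple of $(\mL^i_\lambda)^{n-1}(F^i)^2$, hence $\le 0$, which would make $\Fut$ non-increasing in $\lambda$ and therefore non-decreasing along the MMP (where $\lambda$ decreases) --- the reverse of what you need. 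The correct computation (Lemma \ref{FUT}) gives $\frac{d}{d\lambda}\Fut=-\frac{C_0}{\lambda(\lambda-1)}(\mL^i_\lambda)^{n-1}\cdot(K_{\mX^i}+\mL^i_\lambda)^2$, a \emph{negative} multiple of $(\mL^i_\lambda)^{n-1}(F^i)^2$, hence $\ge 0$; so $\Fut$ decreases as $\lambda$ decreases, which is the direction the telescoping requires.

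Your equality case is genuinely different from the paper's, and as stated it has a gap: to conclude $F^i\equiv_C t_i\mX^i_0$ on \emph{every} model you apply the equality case of Lemma \ref{Zariski}, which requires the auxiliary classes to be ample, whereas on the intermediate models $\mX^i$ ($i\ge 1$) the classes $\mL^i_\lambda$ are only known to be nef; similarly, the asserted relative ampleness of $(\lambda-1)\mH^i$ fails in general, since $\mH^i$ is merely the pushforward of an ample divisor under MMP steps (this is precisely why Lemma \ref{FUT} phrases its strictness criterion on the log canonical model $\mathcal{Z}_\lambda$ rather than on $\mX^i$). The repair is that you only need $i=0$: if equality holds globally, the derivative vanishes for $\lambda\in(\lambda_1,\lambda_0)$, where $\mL^0_\lambda$ \emph{is} ample (a convex combination of the ample $K_{\mX^0}+\lambda_0\mH^0$ and the nef $K_{\mX^0}+\lambda_1\mH^0$), so Zariski gives $K_{\mX^0}+\mH^0\sim_{\mathbb{Q},C}0$; then $K_{\mX^0}+\lambda\mH^0\sim_{\mathbb{Q},C}(\lambda-1)\mH^0$ is relatively ample for all $\lambda>1$, no wall occurs before the pseudo-effective threshold, so $k=0$ and $h$ is an isomorphism. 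The paper instead argues the contrapositive: if $h$ is not an isomorphism it must contract a divisor (the last step cannot be a flip since $-K_{\mX^k}$ is relatively nef), and then $E=K_{\mX^0}+\mH^0-h^*(K_{\mX^k}+\mH^k)$ is a nonzero effective divisor $\sim_{\mathbb{Q},C}K_{\mX^0}+\mH^0$ supported on a proper subset of $\mX^0_0$, so by the equality case of Lemma \ref{Zariski} the DF invariant drops strictly already for $\lambda$ slightly below $\lambda_0$. Both routes rest on the same Zariski equality statement on $\mX^0$; yours avoids the negativity-lemma comparison with the minimal model but must be restricted to $\mX^0$ as above to be valid.
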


To prove Proposition \ref{P-DFMMP}, we first study how DF invariants change when we run MMP with scaling and modify the polarization correspondingly. 
\subsubsection{Decreasing of DF on a fixed model}\label{Non-in}

Assume that $\mX_0^0=\sum_{j\in I}E_j$, where $E_j$'s are the prime
divisors. Since $(\mX^0,\mL^0_\lambda) \times_C C^*$ is isomorphic
to $(\mX^0\times_CC^*, -K_{\mX^0\times_CC^*})$, there exist
$a_j(\lambda)\in\mathbb{R}$ such that
$$
K_{\mX^0}+\mL_{\lambda}^0\sim_{\mathbb{R},C}\sum_{j\in
I}a_j(\lambda) E_j.
$$

On $\mX^i$, for any rational number $\lambda>1$ satisfying $\lambda_i\ge
\lambda \ge \lambda_{i+1}$, we know $\mL^{i}_\lambda$ is a
big and semi-ample. Let
$\mathcal{Z}_{\lambda}$ be the relative log canonical model of
$(\mX^0,\lambda \mH^0)$ over $C$. Then there is a morphism
$\pi_{\lambda}:\mX^i\to \mathcal{Z}_{\lambda}$ and a relatively ample
$\mathbb{Q}$-divisor $\mathcal{M}_{\lambda}$ on $\mathcal{Z}_{\lambda}$
whose pull back is
$$\frac{1}{\lambda -1}(K_{\mX^i}+\lambda \mH^i)=\mL^i_{\lambda}.$$

\begin{lem}\label{FUT}
If $\lambda_i\ge a > b\ge \lambda_{i+1}$ and $b>1$, then
$\Fut(\mX^{i},\mL_a^{i})\ge \Fut(\mX^{i},\mL_b^{i})$. The inequality is strict
 if  there is a rational number $\lambda\in [a, b]$, such that  the push
forward of $\sum_{j\in I}a_j(\lambda) E_j$ to
$\mathcal{Z}_{\lambda}$  is not a multiple of the pull back of
$0\in C$ on $\mathcal{Z}_{\lambda}$.
\end{lem}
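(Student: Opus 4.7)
The plan is to compute $\frac{d}{d\lambda}\Fut(\mX^i,\mL_\lambda^i)$ and show it is non-negative throughout $[\lambda_{i+1},\lambda_i]$, with strict positivity under the stated hypothesis; integration then gives the lemma.

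First I would exploit the identities
\[
K_{\mX^i}+\mL_\lambda^i=\frac{\lambda}{\lambda-1}(K_{\mX^i}+\mH^i),\qquad \frac{d\mL_\lambda^i}{d\lambda}=-\frac{K_{\mX^i}+\mH^i}{(\lambda-1)^2}=-\frac{K_{\mX^i}+\mL_\lambda^i}{\lambda(\lambda-1)},
\]
which follow directly from $\mL_\lambda^i=\frac{1}{\lambda-1}(K_{\mX^i}+\lambda\mH^i)$. Since $\mL_\lambda^i|_\eta\sim -K_{\mX^i_\eta}$ on the generic fiber, the class $K_{\mX^i}+\mL_\lambda^i$ is $\mathbb{R}$-linearly equivalent over $C$ to the fiber-supported divisor $\sum_j a_j(\lambda)E_j$ in the statement. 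Differentiating the intersection formula for $\Fut$ and substituting produces
\[
\frac{d\Fut(\mX^i,\mL_\lambda^i)}{d\lambda}=-\frac{1}{2V\lambda(\lambda-1)}\,\mL_\lambda^{n-1}\cdot(K_{\mX^i}+\mL_\lambda^i)\cdot(nK_{\mX^i}+\mL_\lambda^i),
\]
where $V=(-K_{\mX^i_t})^n$. Expanding $(K+\mL)(nK+\mL)=n(K+\mL)^2-(n-1)\mL(K+\mL)$ reduces non-negativity of the derivative to
\[
n\,\mL_\lambda^{n-1}(K+\mL_\lambda)^2 \;\le\; (n-1)\,\mL_\lambda^n(K+\mL_\lambda).
\]

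Next I would transfer the intersections to the relative log canonical model via $\pi_\lambda:\mX^i\to\mathcal{Z}_\lambda$, using $\mL_\lambda^i=\pi_\lambda^*\mathcal{M}_\lambda$ with $\mathcal{M}_\lambda$ relatively ample. By the projection formula (and vanishing of $\pi_\lambda$-exceptional contributions against $\pi_\lambda^*\mathcal{M}_\lambda^{n-1}$), the square term reduces to $\mathcal{M}_\lambda^{n-1}\cdot(K_{\mathcal{Z}_\lambda}+\mathcal{M}_\lambda)^2$ on $\mathcal{Z}_\lambda$, and similarly $\mL_\lambda^n\cdot(K+\mL_\lambda)=\mathcal{M}_\lambda^n\cdot(K_{\mathcal{Z}_\lambda}+\mathcal{M}_\lambda)$. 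Here $K_{\mathcal{Z}_\lambda}+\mathcal{M}_\lambda=\pi_{\lambda*}(\sum a_j(\lambda)E_j)$ is fiber-supported and $\mathcal{M}_\lambda$ is ample, so Lemma \ref{Zariski} applies on $\mathcal{Z}_\lambda$ and gives
\[
(K_{\mathcal{Z}_\lambda}+\mathcal{M}_\lambda)^2\cdot \mathcal{M}_\lambda^{n-1}\le 0,
\]
with strict inequality unless $K_{\mathcal{Z}_\lambda}+\mathcal{M}_\lambda$ is a rational multiple of the pullback of $0\in C$, i.e., unless $\pi_{\lambda*}(\sum a_j(\lambda)E_j)$ is a multiple of $\pi_\lambda^{-1}(0)$. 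This matches the strict-inequality hypothesis of the lemma exactly.

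The cross term $\mL_\lambda^n(K+\mL_\lambda)$ is controlled using the log canonical structure of the pair $(\mathcal{Z}_\lambda,\mathcal{Z}_{\lambda,0})$, which provides an effectivity statement on a suitable representative of $K_{\mathcal{Z}_\lambda}+\mathcal{M}_\lambda$ after adjusting by multiples of the fiber; intersecting such an effective representative with the ample class $\mathcal{M}_\lambda^n$ gives non-negativity. Combined with the Zariski bound on the square term, this yields the desired inequality for the derivative. Integrating $\frac{d\Fut}{d\lambda}\ge 0$ over $\lambda\in[b,a]$ gives $\Fut(\mX^i,\mL_a^i)\ge\Fut(\mX^i,\mL_b^i)$, with strict inequality provided the strict Zariski condition holds on some subinterval, which is exactly the hypothesis stated.

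The main obstacle is rigorously handling the cross term $\mL_\lambda^n(K+\mL_\lambda)$. Whereas the square term is negative by a direct Zariski-type bound, the cross term requires combining the projection-formula reduction with the log canonical geometry of $(\mathcal{Z}_\lambda,\mathcal{Z}_{\lambda,0})$ (inherited from the Step 1 construction) to deduce the right sign; this is where the log canonicity hypothesis of the lemma enters essentially.
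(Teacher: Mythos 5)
There is a genuine gap, and it originates in the very first display: you differentiate the formula of Definition \ref{defint} as printed, with coefficient $1$ on $(\tfrac1r\mL)^{n+1}$. That printed coefficient is a misprint: the invariant the paper actually works with is Donaldson's DF invariant, whose intersection expression (derived in Proposition \ref{Intfor}, and the one used in the proof of Proposition \ref{P-DEC}) carries the coefficient $n$ on $(\tfrac1r\mL)^{n+1}$. With the correct normalization one gets
\[
\frac{d}{d\lambda}\Fut(\mX^{i}/C,\mL^{i}_\lambda)=C_0\,(\mL^{i}_\lambda)^{n-1}\cdot(\mL^{i}_\lambda)'\cdot\bigl(K_{\mX^{i}}+\mL^{i}_\lambda\bigr)
=-\frac{C_0}{\lambda(\lambda-1)}(\mL^{i}_\lambda)^{n-1}\cdot\bigl(K_{\mX^{i}}+\mL^{i}_\lambda\bigr)^2 ,
\]
so the derivative is a pure square term, there is no cross term at all, and the lemma follows at once from Lemma \ref{Zariski} (pushing down to $\mathcal{Z}_\lambda$, where $\mathcal{M}_\lambda$ is relatively ample, to get the equality characterization — this part of your argument agrees with the paper). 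Your computation of $(\mL_\lambda^i)'=-\frac{1}{\lambda(\lambda-1)}(K_{\mX^i}+\mL_\lambda^i)$ and your use of Zariski on $\mathcal{Z}_\lambda$ are exactly the paper's steps; the discrepancy is entirely the spurious factor $(nK_{\mX^i}+\mL_\lambda^i)$ produced by the misprinted definition.

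More importantly, the way you propose to dispose of the resulting cross term would fail. You need $(n-1)\,\mL_\lambda^n\cdot(K_{\mX^i}+\mL_\lambda)\ge n\,\mL_\lambda^{n-1}\cdot(K_{\mX^i}+\mL_\lambda)^2$, and you argue the left side is nonnegative by replacing $K_{\mX^i}+\mL_\lambda$ (or its image on $\mathcal{Z}_\lambda$) by an effective representative ``after adjusting by multiples of the fiber.'' But $\mL_\lambda^n\cdot(K_{\mX^i}+\mL_\lambda)$ is \emph{not} invariant under adding $c$ times a fiber: it changes by $c\,(-K_{\mX_t})^n\neq 0$. (By contrast, the square term $\mL_\lambda^{n-1}\cdot(K_{\mX^i}+\mL_\lambda)^2$ is invariant under such adjustments, which is precisely why the paper's integrand is well behaved and why only the vertical divisor $\sum_j a_j(\lambda)E_j$ modulo fibers matters.) The divisor class $K_{\mX^i/C}+\mL_\lambda$ is only relatively trivial over $C^*$; globally it differs from a divisor supported over $0$ by the pullback of a divisor class on $C$ whose degree is an unconstrained global quantity, so $\mL_\lambda^n\cdot(K_{\mX^i}+\mL_\lambda)$ has no a priori sign, and log canonicity of $(\mathcal{Z}_\lambda,\mathcal{Z}_{\lambda,0})$ — a local condition along the central fiber — cannot control it. So the cross-term step is not merely unproved; the strategy for it cannot work, and the fix is to use the correct DF normalization, after which the lemma reduces to the single Zariski estimate.
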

\begin{proof}
We compute the derivative of the
DF invariants in a similar way as Propostion \ref{P-DEC}.
\begin{eqnarray*}
\frac{d}{d\lambda}\Fut(\mX^{i}/C,\mL_\lambda^{i})&=&C_0\left((\mL^{i}_\lambda)^{n-1}\cdot(\mL^{i}_\lambda)'\cdot(\mL^{i}_\lambda+K_{\mX^{i}})\right)\\
&=&-\frac{C_0}{\lambda(\lambda-1)}(\mL^{i}_\lambda)^{n-1}\cdot\left(K_{\mX^{i}}+\mL^{i}_\lambda\right)^2\quad
\\
&=&-\frac{C_0}{\lambda(\lambda-1)}
(\mathcal{L}^i_\lambda)^{n-1}\cdot\left( \sum_{ j\in I}
a_j(\lambda) E_j\right)^2,
\end{eqnarray*}
where $C_0$ is a positive constant. Then the lemma follows from Lemma \ref{Zariski}.
\end{proof}

\subsubsection{Invariance of DF at
contraction or flip points} \label{Inv-CF}

If $\lambda_{i+1}> 1$, then
by the definition of MMP with scaling, we pick up a $K_{\mX^{i}}$-negative extremal ray $[R]$ in $
{\rm NE}(\mX^{i}/C)$ such that $R\cdot (K_{\mX^i}+\lambda_{i+1}\mathcal{H}^i)=0$.
 we perform a birational
transformation:

\begin{center}
\hspace{8mm} \xymatrix{
  \ar[rr]^{f^i} \ar[dr]_{}
              \mX^{i}  &  &   {\mY^{i}}  \ar[dl]^{}    \\
                &  \mathbb{P}^1,             }
\end{center}
which contracts all curves $R'$ whose classes $[R']$ are in the ray $\mathbb{R}_{>0} [R]$.
There are two cases:
\begin{enumerate}
\item (Divisorial Contraction)
If $f^i$ is a divisorial contraction. Then $\mX^{i+1}=\mY^{i}$.
Since $f^i$ is a $(K_{\mX^{i}}+\lambda_{i+1}\mH^{i})$-trivial morphism
by the definition of the MMP with scaling, we have
$$
K_{\mX^{i}}+\lambda_{i+1}\mH^{i}=(f^i)^*(K_{\mY^{i}}+\lambda_{i+1}\mH^{i+1}),
$$
which implies
$$\mL_{\lambda_{i+1}}^{i}=(f^i)^*\mL_{\lambda_{i+1}}^{i+1}.$$
Then it follows from Definition \ref{defint} and projection formula
that
$$ \Fut(\mX^{i}/C,\mL^{i}_{\lambda_{i+1}})=\Fut(\mX^{i+1}/C,
\mL^{i+1}_{\lambda_{i+1}}).
$$

\item (Flipping Contraction)
If $f^{i}$ is a flipping contraction, let $\phi^{i}:
\mX^{i}\dashrightarrow \mX^{i+1}$ be the flip.

\begin{center}
\hspace{8mm} \xymatrix{
  \mX^{i} \ar@{-->}[rr]^{\phi^{i}} \ar[dr]_{-K_{\mX^{i}} \mbox{ is } f^{i}\mbox{-ample
  }}^{f^{i}}
                &  &   \mX^{i+1}  \ar[dl]^{\;\;K_{\mX^{i+1}} \mbox{ is } f^{i+}\mbox{-ample }}_{f^{i+}}    \\
                & \mY^{i}              }
\end{center}

As $f^i$ is a $K_{\mX^{i}}+\lambda_{i+1}\mH^{i}$-trivial morphism,
$K_{\mX^{i}}+\lambda_{i+1}\mH^{i}=(f^{i})^*D_{\mY^{i}}$ for some
divisor $D_{\mY^{i}}$. Since $f^i$, $f^{i+}, \phi^i$ are
isomorphisms in codimension one, we also have
$K_{\mX^{i+1}}+\lambda_{i+1}\mH^{i+1}=(f^{i+})^*D_{\mY^{i}}$.
Therefore, using the projection formula, we see that
$$
\Fut(\mX^i/C,
K_{\mX^{i}}+\lambda_{i+1}\mathcal{H}^{i})=\Fut(\mY^i/C,
D_{\mY^i})=\Fut(\mX^{i+1}/C,
K_{\mX^{i+1}}+\lambda_{i+1}\mathcal{H}^{i+1}).
$$

\end{enumerate}

Now we can finish the proof of Proposition \ref{P-DFMMP}:
\begin{proof}[Completion of proof of Proposition \ref{P-DFMMP}]
By the discussion in \ref{Non-in} and \ref{Inv-CF}, we have
\begin{eqnarray*}
& &\Fut(\mX^{0}/C, \mL^{0}_{\lambda_0})\ge \Fut(\mX^{0}/C,
\mL^{0}_{\lambda_1})\\
&=&\Fut(\mX^{1}/C, \mL^{1}_{\lambda_1})\ge \Fut(\mX^{1}/C,
\mL^{1}_{\lambda_2})\\
&&\cdots\quad  \cdots \quad \cdots\\
&=&\Fut(\mX^{i}/C, \mL^{i}_{\lambda_i})\ge \Fut(\mX^{i}/C,
\mL^{i}_{\lambda_{i+1}})\\
&=&\Fut(\mX^{i+1}/C, \mL^{i+1}_{\lambda_{i+1}})\ge \Fut(\mX^{i+1}/C,
\mL^{i+1}_{\lambda_{i+2}})\\
&&\cdots\quad \cdots \quad \cdots\\
&=&\Fut(\mX^{k}/C, \mL^{k}_{\lambda_k})=\Fut(\mX^{k}/C,-K_{\mX^k}).
\end{eqnarray*}

 We proceed to characterize the equality case. Since
$-K_{\mX^k}\sim_{\mathbb{Q}, C}\mL^{k}_{\lambda_k}$ is
relatively nef over $C$, we conclude that $f^{k-1}:\mX^{k-1}\to
\mX^k$ can only be a divisorial contraction. Therefore, $h:
\mX^0\dashrightarrow \mX^k$ contracts at least one divisor if it is
not an isomorphism.

Since $\mX^k$ is a minimal model of $(\mX^0,\mH^0)$ (see Theorem \ref{t-mmp}(2)),
we know that
$$0<E=K_{\mX^0}+\mH^0-h^*(K_{\mX^k}+\mH^k)
\sim_{\mathbb{Q},
C}K_{\mX^0}+\mH^0,$$
which is supported on the fiber over $0$. It follows from the fact that the support of $E$
is a proper subset of $\mX^0_0$,
$K_{\mX^{0}}+\mH^{0}$ is not $\mathbb{Q}$-linearly
equivalent to 0 over $C$, i.e.,
 the equality
condition of Lemma \ref{FUT}  can not hold on $\mX^{{\rm lc}}$. Thus a  for
sufficiently small rational number $\epsilon$,
$$\Fut(\mX^{{\rm lc}}/C,\mL^{{\rm lc}})=\Fut(\mX^0/C,\mL_{\lambda_0}^0)>\Fut(\mX^0/C,\mL^0_{\lambda_0-\epsilon})\ge \Fut(\mX^k/C,\mL^k_{\lambda_k}).$$ 
\end{proof}


\section{Revisiting of $\mathbb{Q}$-Fano extension}\label{atpsef}

Let us continue the study of Example \ref{ex-deg}.
\begin{exmp}
We use the notation in Example \ref{ex-deg}.
Since $K_{\mathcal{X}}$ is of bidegree $(-1,-1)$,
$$\Fut(\mX/\mathbb{P}^1,-K_{\mX})=-\frac{1}{2(n+1)(-K_{\mX_t})^n}(-K_{\mX/\mathbb{P}^1})^{n+1}=\frac{4}{9}.$$ 
Using the intersection formula, we easily see 
\begin{eqnarray*}
\Fut(\tilde{\mX}/\mathbb{P}^1,-K_{\tilde{\mX}})&=&\Fut(\tilde{\mX}/\mathbb{P}^1,-(K_{\tilde{\mX}}+\tilde{\mX_0}))\\
&=&3\cdot \Fut(\mX'/\mathbb{P}^1,-(K_{\mX'}+\mX'_0))\\
&=&3\cdot \Fut(\mX/\mathbb{P}^1,-(K_{\mX}+\mX_0))\\
&=&3\cdot \Fut(\mX/\mathbb{P}^1,-K_{\mX})\\
&=&\frac{4}{3}.
\end{eqnarray*}
Since $K_{\tilde\mX}|_{S_1}=(K_{\tilde\mX}+\tilde\mX_0)|_{S_1}$ is trivial, we calculate 
\begin{eqnarray*}
\Fut(\mX^{\rm s}/\mathbb{P}^1,-K_{\mX^{{\rm s}}})&=&\frac{1}{18}(K_{\mX^{{\rm s}}/\mathbb{P}^1})^3\\
&=&\frac{1}{18}(K_{\tilde{\mX}/\mathbb{P}^1}-T)^3=\frac{1}{18}(K_{\tilde{\mX}/\mathbb{P}^1}-\tilde\mX_0+S_1)^3\\
&=&\frac{4}{3}-\frac{1}{18}(9-3)<3\cdot   \Fut(\mX/\mathbb{P}^1,-K_{\mX}).
\end{eqnarray*}
Therefore,
we see that if we  normalize the DF invariants by dividing the degree of the base change, then our process in this example decreases this normalized DF invariant.
\end{exmp}

From the discussion of the last section, we achieve a model
$\mX^{{\rm ac}}$ over $C$ with polarization $\mL^{{\rm ac}}$ which compactifies
$\mX^{*}/C^*$ such that $\mL^{{\rm ac}}\sim_{\mathbb{Q},C}-K_{\mX^{{\rm ac}}}$
and $(\mX^{{\rm ac}}, \mX_t^{{\rm ac}})$ is log canonical for any $t\in C$. We can not run an MMP
directly from $\mX^{{\rm ac}}$ to get $\mX^{{\rm s}}$. Instead we will resolve
$\mX^{{\rm ac}}$ again and run MMP. More precisely, by Theorem
\ref{t-dfano}(2), we know that there exists $\phi: C'\to C$ with  a $\mathbb{Q}$-Fano family
$\mX^{{\rm s}}/C'$. We will show this is our final $\mathbb{Q}$-Fano
family by verifying the decreasing of the DF invariant.

Using the notation in Theorem \ref{t-dfano}, 
$a(\mX^{{\rm s}}_0;\tilde{\mX}^{{\rm ac}})=0$ implies
$$a(\mX^{{\rm s}}_0;\tilde{\mX}^{{\rm ac}},\tilde{\mX}^{{\rm ac}}_0)=-1,$$
since $\tilde{\mX}^{{\rm ac}}_0$ is Cartier and
$(\tilde{\mX}^{{\rm ac}},\tilde{\mX}^{{\rm ac}}_0)$ is log canonical.  Then for any number $\lambda\in[0,1]$, we know that
$$a(\mX^{{\rm s}}_0;\tilde{\mX}^{{\rm ac}},\lambda\tilde{\mX}^{{\rm ac}}_0)=-\lambda.$$
In particular, there exists a model $\pi':\mX'\to \tilde{\mX}^{{\rm ac}}$
which precisely extracts the divisor $\mX^{{\rm s}}_0$ (cf.
\cite[1.4.3]{BCHM}). Since $a(\mX^{{\rm s}}_0;{\tilde{\mX}}^{{\rm ac}})=0$, we
know $\pi'^*(K_{\tilde{\mX}^{{\rm ac}}} )=K_{\mX'}$. Then by the projection formula
$$\Fut(\mX'/C',-K_{\mX'})=
\Fut(\tilde{\mX}^{{\rm ac}}/C',-K_{\tilde{\mX}^{{\rm ac}}})=\deg(\phi)\cdot \Fut(\mX^{{\rm ac}}/C,-K_{\mX^{{\rm ac}}}).$$

\begin{prop}\label{P-MMP}
We have the inequality
$$\Fut(\mX'/C',-K_{\mX'})\ge \Fut(\mX^{{\rm s}}/C',-K_{\mX^{{\rm s}}}),$$
and the equality holds if and only if the rational map
$\tilde{\mathcal{X}}^{{\rm an}}\dashrightarrow \mX^{{\rm s}}$ is an isomorphism.
\end{prop}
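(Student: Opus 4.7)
The plan is to reduce the inequality of DF invariants to a comparison of anticanonical volumes $(-K)^{n+1}$ on the two models, and then carry out that comparison on a common log resolution using the negativity lemma of MMP.

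First, I would exploit that both polarizations equal $-K$ (with $r=1$), so formula \eqref{intform} simplifies. Writing $K_{\mX/C'}=K_{\mX}-f^*K_{C'}$ and using that the common nondegenerate fiber satisfies $(-K_{\mX'_t})^n=(-K_{\mX^{{\rm s}}_t})^n$, one obtains
\[
\Fut(\mX'/C',-K_{\mX'})-\Fut(\mX^{{\rm s}}/C',-K_{\mX^{{\rm s}}})=\frac{-n}{2(n+1)(-K_{\mX^{{\rm s}}_t})^n}\left[(-K_{\mX'})^{n+1}-(-K_{\mX^{{\rm s}}})^{n+1}\right],
\]
the additive contribution $-\deg(K_{C'})/2$ cancelling. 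Thus the proposition reduces to showing $(-K_{\mX'})^{n+1}\le (-K_{\mX^{{\rm s}}})^{n+1}$, with equality iff the rational map $\tilde{\mX}^{{\rm ac}}\dashrightarrow \mX^{{\rm s}}$ is an isomorphism.

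Next, I would take a common log resolution $\mathcal{W}$ of the birational contraction $h:\mX'\dashrightarrow \mX^{{\rm s}}$, with morphisms $p:\mathcal{W}\to \mX'$ and $q:\mathcal{W}\to \mX^{{\rm s}}$, and write
\[
p^*(-K_{\mX'})=q^*(-K_{\mX^{{\rm s}}})+F.
\]
Because $h$ is a birational contraction, $h_*(-K_{\mX'})=-K_{\mX^{{\rm s}}}$ as Weil divisors on $\mX^{{\rm s}}$, so $q_*F=0$ and $F$ is $q$-exceptional. Since $-K_{\mX'}$ is big and nef (established in the outline of Step 3), $p^*(-K_{\mX'})$ is nef on $\mathcal{W}$; for any curve $\gamma$ contracted by $q$, $F\cdot \gamma = p^*(-K_{\mX'})\cdot \gamma \ge 0$, so $F$ is $q$-nef. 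The negativity lemma (\cite[Lemma 3.39]{KM}) applied to the $q$-exceptional $q$-nef divisor $F$ then yields $F\le 0$, i.e., $p^*(-K_{\mX'})\le q^*(-K_{\mX^{{\rm s}}})$. Since both sides are nef and big $\mathbb{Q}$-divisors on $\mathcal{W}$, the monotonicity of volume on the nef cone gives $(-K_{\mX'})^{n+1}\le (-K_{\mX^{{\rm s}}})^{n+1}$.

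The main obstacle will be the equality case. Setting $E:=-F\ge 0$, the identity
\[
(q^*(-K_{\mX^{{\rm s}}}))^{n+1}-(p^*(-K_{\mX'}))^{n+1}=E\cdot\sum_{k=0}^{n}(p^*(-K_{\mX'}))^{n-k}(q^*(-K_{\mX^{{\rm s}}}))^k
\]
expresses the volume difference as a sum of intersections of nef classes against the effective $E$, each nonnegative. Equality of volumes forces every summand to vanish; in particular, via projection formula together with $p^*(-K_{\mX'})=(\pi'\circ p)^*(-K_{\tilde{\mX}^{{\rm ac}}})$ (using $K_{\mX'}=(\pi')^*K_{\tilde{\mX}^{{\rm ac}}}$ noted just before the proposition), the divisor $E$ is both $q$-exceptional and $(\pi'\circ p)$-exceptional, and a Khovanskii--Teissier-type strict log-concavity of mixed intersection numbers forces $E\equiv 0$. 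Then $p^*(-K_{\mX'})\equiv q^*(-K_{\mX^{{\rm s}}})$, so the semi-ample morphisms they define on $\mathcal{W}$ (factoring through $\tilde{\mX}^{{\rm ac}}$ and $\mX^{{\rm s}}$ respectively, with $-K_{\mX^{{\rm s}}}$ ample) must agree, yielding $\tilde{\mX}^{{\rm ac}}\cong \mX^{{\rm s}}$ and the claimed isomorphism.
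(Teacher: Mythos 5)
Your reduction to comparing anticanonical volumes and your treatment of the inequality follow essentially the paper's route: a common resolution, the relation $p^*K_{\mX'}=q^*K_{\mX^{{\rm s}}}+E$ (your $F=-E$), the negativity lemma to get the sign of $E$, and a termwise expansion of the volume difference. One caveat on your justification: $p^*(-K_{\mX'})$ and $q^*(-K_{\mX^{{\rm s}}})$ are \emph{not} nef and big on the resolution --- they are only relatively nef (resp.\ relatively semi-ample and big) over $C'$ --- so ``monotonicity of volume on the nef cone'' is not available as stated. What makes the termwise estimate $E\cdot A^{n-k}\cdot B^{k}\ge 0$ valid is that $E$ is \emph{vertical} (all the maps are isomorphisms over $C^*$), so one only needs nefness on fiber components, i.e.\ relative nefness; this is exactly the paper's computation, phrased there as $\frac{d}{d\lambda}\left(-p^*K_{\mX'/C}+\lambda E\right)^{n+1}\ge 0$. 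This part is repairable and matches the paper.

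The genuine gap is the equality case. Your assertion that $E$ is $(\pi'\circ p)$-exceptional is false precisely in the situation you must rule out: if the map $\tilde{\mX}^{{\rm ac}}\dashrightarrow \mX^{{\rm s}}$ is not an isomorphism in codimension one, then (since $\mX^{{\rm s}}_0$ is irreducible and everything is an isomorphism over $C^*$) some component $E_1\subset \mX^{{\rm ac}}_0$ is contracted, and its strict transform $\hat E_1$ is a divisor coming from $\mX^{{\rm ac}}$, hence not $(\pi'\circ p)$-exceptional, yet it lies in ${\rm Supp}(E)$. Moreover the appeal to a ``Khovanskii--Teissier-type strict log-concavity'' to force $E\equiv 0$ does not work: for any part of $E$ that \emph{is} $(\pi'\circ p)$-exceptional, the key term $E\cdot\left(p^*(-K_{\mX'})\right)^{n}$ vanishes automatically by the projection formula, so the vanishing of the summands yields no contradiction by itself, and the strict forms of such inequalities require ampleness you do not have on the resolution. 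The paper argues in the opposite direction: assuming the map is not an isomorphism, it first disposes of the case where it is an isomorphism in codimension one (then both sides are ${\rm Proj}$ of the same relative anticanonical ring, hence isomorphic), and otherwise produces the component $E_1$ above; since $-K_{\mX^{{\rm ac}}}$ is relatively ample, $(\pi'\circ p)^*(-K_{\mX^{{\rm ac}}})$ is nontrivial on the generic fiber of $\hat E_1\to {\rm center}_{\mX^{{\rm s}}}(E_1)$, which forces $\hat E_1\subset{\rm Supp}(E)$ with some coefficient $a>0$, and then $E\cdot\left(p^*(-K_{\mX'})\right)^{n}\ge a\,E_1\cdot(-K_{\mX^{{\rm ac}}})^{n}>0$, contradicting equality of the volumes. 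Without an argument of this kind --- exhibiting a non-exceptional component of $E$ and exploiting the relative ampleness of $-K_{\mX^{{\rm ac}}}$ --- your characterization of the equality case is not established (your final step, that $E=0$ implies $\tilde{\mX}^{{\rm ac}}\cong\mX^{{\rm s}}$ via the common ample model, is fine and agrees with the paper).
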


\begin{proof}

By abuse of notation, we identify $C$ and $C'$, ${\mX}^{{\rm ac}}$ and
$\tilde{\mX}^{{\rm ac}}$.


Using the intersection formula, we have that
$$\Fut(\mX'/C,-K_{\mX'/C})=-\frac{1}{2(n+1)(-K_{\mX_t})^n}(-K_{\mX'/C})^{n+1}.$$ Similarly,

$$\Fut(\mX^{{\rm s}}/C,-K_{\mX^{{\rm s}}/C})=-\frac{1}{2(n+1)(-K_{\mX_t})^n}(-K_{\mX^{{\rm s}}/C})^{n+1}.$$

Let $p:\hat{\mX}\to \mX'$ and $q:
\hat{\mX}\to \mX^{{\rm s}}$ be a common log resolution, and we write
$$(\pi'\circ p)^*(K_{\mathcal{X}^{{\rm ac}}})=p^*K_{\mX'}=q^*K_{\mX^{{\rm s}}}+E.$$
 Since $\mX'\dashrightarrow
\mX^{{\rm s}}$ is a birational contraction, by negativity lemma (cf.
\cite[3.39]{KM}) we  conclude that $E\ge 0$.
For $0\le \lambda \le 1$, let
$$f(\lambda)=(-p^*K_{\mX'/C}+\lambda E)^{n+1}.$$
Then for any $0\le \lambda\le 1$
 \begin{eqnarray}
\frac{df(\lambda)}{d\lambda} &=& (n+1)E \cdot (-p^*K_{\mX'}+\lambda E)^{n}\nonumber\\
 &=&(n+1)E \cdot (-(1-\lambda)p^*K_{\mX'}-\lambda q^*K_{\mX^{{\rm s}}})^{n} \nonumber\\
&\ge& 0 \nonumber,
\end{eqnarray}
since $-(1-\lambda)p^*K_{\mX'}-\lambda q^*K_{\mX^{{\rm s}}}$ is relatively
nef over $C$. Thus
$$\Fut(\mX'/C',-K_{\mX'})\ge
\Fut(\mX^{{\rm s}}/C',-K_{\mX^{{\rm s}}}).$$
We analyze when the equality holds. If $E=0$, then
$$\mX^{{\rm ac}}\cong {\rm Proj}R(\mX'/C,-K_{\mathcal{X}'/C})\cong {\rm Proj} R(\mX^{{\rm s}}/C,-K_{\mX^{{\rm s}}/C})=\mX^{{\rm s}}.$$  So we may assume that the effective $\mathbb{Q}$-divisor $E$ is not equal to 0.

Next we assume that  $\mathcal{X}^{{\rm ac}}$ is isomorphic to $\mX^{{\rm s}}$ in
codimension 1. Thus for any divisor $D$ on $\mathcal{X}^{{\rm ac}}$,
$$R(\mathcal{X}^{{\rm ac}}/C,D)\cong R(\mX^{{\rm s}}/C,D_{\mX^{{\rm s}}}),$$
where $D_{\mX^{{\rm s}}}$ is the push forward of $D$ to $\mX^{{\rm s}}$. In
particular, if we let $D=-K_{\mathcal{X}^{{\rm ac}}}$, we again have
$$\mathcal{X}^{{\rm ac}}\cong {\rm
Proj}R(\mathcal{X}^{{\rm ac}}/C,-K_{\mathcal{X}^{{\rm ac}}/C})\cong {\rm Proj}
R(\mX^{{\rm s}}/C,-K_{\mX^{{\rm s}}/C})=\mX^{{\rm s}}.$$

So we can assume that $E>0$ and  $\mathcal{X}^{{\rm ac}}$ is not isomorphic to
$\mX^{{\rm s}}$ in codimension 1, then we claim that:
\begin{equation}\label{decvol}
 f(0)<f(\lambda)
\end{equation}
 for any $1>\lambda>0$.

In fact, since  $\mX^{{\rm s}}_0$ is irreducible, from the above discussion
we may assume that there exists a component $E_1\subset \mX^{{\rm ac}}_0$
such that the birational transform $\hat{E}_1$ of $E_1$ on
$\hat{\mX}$ is contracted under $\hat{\mX}\to \mX^{{\rm s}}$. As
$-K_{\mX^{{\rm ac}}}$ is ample on $E_1$, $-(\pi'\circ p)^*K_{\mX^{{\rm ac}}}$ is
nontrivial on the generic fiber of  $\hat{E}_1 \to {\rm
center}_{\mX^{{\rm s}}}(E_1)$. This implies $\hat{E}_1\subset {\rm Supp}(E)$
(cf. \cite[3.39]{KM}). Denote the coefficient of $\hat{E}_1$ in $E$ to
be $a>0$. Then
\begin{eqnarray}
\left.\frac{df(\lambda)}{d\lambda}\right|_{\lambda=0} &=& (n+1)E \cdot (-p^*K_{\mX'})^{n}\nonumber\\
 &\ge&(n+1)a\hat{E}_1 \cdot (-p^*K_{\mX'})^{n}\nonumber\\
 &=& (n+1)aE_1 \cdot (-K_{\mX^{{\rm ac}}})^{n} \nonumber\\
&>& 0 \nonumber.
\end{eqnarray}
Since $f(\lambda)$ is nondecreasing on $\lambda\in [0,1]$ and its derivative at $0$ is positive, we easily see $f(\lambda)> f(0)$ for any $\lambda\in (0,1]$.

\end{proof}


\section{Proof of theorems}\label{PET}

In this section, we finish proving Theorem \ref{main}
by combining the three steps proved in Theorem \ref{t-step1},
Theorem \ref{t-step2} and Theorem \ref{t-step3}.

\begin{proof}[Proof of Theorem \ref{main}]

Let $(\mX,\mL)$ be any polarized generic $\mathbb{Q}$-Fano family. Note that, in particular, we assume $\mX$ is normal. 

Then it follows from Theorem \ref{t-step1} that, after a base change
$\phi: C'\to C$, we get a polarized generic $\mathbb{Q}$-Fano family
$(\mX^{{\rm lc}},\mL^{{\rm lc}})$ satisfying the properties stated in Theorem
\ref{t-step1}. 
Letting $l>0$ be a sufficiently large integer,
we can run a sequence of $K_{\mX^{{\rm lc}}}$-MMP over $C$ with
scaling of $\mH^{{\rm lc}}=\mL^{{\rm lc}}-(l+1)^{-1} (K_{\mX^{{\rm lc}}}+\mL^{\rm lc})$ as in Section
\ref{bfpsef}. We obtain a model $\mX^{k}\to C'$ with $-K_{\mX^k}$ is
relatively big and semi-ample. Therefore, it admits an
anti-canonical model $\mX^{{\rm ac}}$ which satisfies that
$(\mX^{{\rm ac}},\mX^{{\rm ac}}_t)$ is log canonical for every $t\in C'$.
Finally, after a base change $C''\to C'$, we construct a
$\mathbb{Q}$-Fano familly $\mX^{{\rm s}} \to C''$. After base change to
$C''$, all our models after base change are isomorphic over $C^*\times_C C''$.

For the DF invariants,
\begin{eqnarray}\label{decseq2}
 \deg(C'/C)\cdot\Fut(\mX/C, \mL)
&\ge&\Fut(\mX^{{\rm lc}}/C', \mL^{{\rm lc}}) \hspace{5mm} \mbox{(by Theorem \ref{t-step1})}  \label{decnlc} \\
&\ge&\Fut(\mX^{{\rm ac}}/C', -K_{\mX^{{\rm ac}}}) \hspace{5mm} \mbox{(by Theorem \ref{t-step2})} \label{declcan}\\
&\ge&\frac{1}{\deg(C''/C')}\Fut(\mX^{{\rm s}}/C'',-K_{\mX^{{\rm s}}}) \hspace{5mm}
\mbox{(by Theorem \ref{t-step3})} \label{declcm}.
\end{eqnarray}

By Theorem \ref{t-step3}, the equality in \eqref{declcm} holds if
and only if $\tilde{\mX}^{{\rm ac}}=\mX^{{\rm s}}$. Assume that $t''\in C''$ is
mapped to $t'\in C'$, then $(\mX^{{\rm ac}},\mX^{{\rm ac}}_{t'})$ is plt if and
only if $(\tilde{\mX}^{{\rm ac}},\tilde{\mX}^{{\rm ac}}_{t''})$ (see
\cite[5.20]{KM}) which then implies that $\mX^{{\rm ac}}$ is a
$\mathbb{Q}$-Fano family over $C'$.

By Theorem \ref{t-step2}, the equality in \eqref{declcan} holds if and
only if $\mX^{{\rm lc}}=\mX^{{\rm ac}}$ and $\mL^{{\rm lc}}=\mL^{{\rm ac}}$.

Finally by Theorem \ref{t-step1},  the equality in \eqref{decnlc}
holds if and only if $(\mX, \mX_t)$ is log canonical for any $t\in C$ and
$\mX^{{\rm lc}}\cong \mathcal{X}\times_{C}C'$. As
$(\mX^{{\rm lc}},\mX^{{\rm lc}}_t)\cong (\mX^{{\rm s}},\mX^{{\rm s}}_t)$ is a plt for any $t\in C'$ and $\mL^{{\rm lc}}\sim_{\mathbb{Q}, C'}-K_{\mX^{{\rm lc}}}$,  this implies that $(\mX,\mX_t)$ is plt for any $t\in C$ (cf. \cite[5.20]{KM}) and $\mL\sim_{\mathbb{Q}, C}-K_{\mX}$, i.e., $\mX$ is a $\mathbb{Q}$-Fano family over $C$.
\end{proof}

\part{Application to KE Problem}

In this part, we consider the application of  Theorem \ref{main} to the study of K-stability of Fano varieties and existence of K\"abler-Einstein metric.

\section{Introduction: K-stability from K\"{a}hler-Einstein problem}\label{ss-KE}
A fundamental
problem in K\"{a}hler geometry is to determine whether there exists
a K\"{a}hler-Einstein metric on a Fano manifold $X$, i.e., to find a K\"{a}hler
metric $\omega_{{\rm KE}}$ in the K\"{a}hler class $c_1(X)$ satisfying the
equation:
\[
{\rm Ric}(\omega_{{\rm KE}})=\omega_{{\rm KE}}.
\]
This is a variational problem. Futaki \cite{Fut} found an important
invariant as the obstruction to this problem. Then Mabuchi
\cite{Mab} defined the K-energy functional by integrating this
invariant. This is a Kempf-Ness type function on the infinite dimensional space of K\"{a}hler metrics in $c_1(X)$. 
The minimizer of the K-energy is the K\"{a}hler-Einstein
metric. Tian \cite{Tian97} proved that, under some restriction on the automorphism group, there is a
K\"{a}hler-Einstein metric if and only if the K-energy is proper on
the space of all K\"{a}hler metrics in $c_1(X)$. So the problem is
how to test the properness of the K-energy.

Tian also developed a program to reduce this infinite dimensional problem
to finite dimensional problems. More precisely, he proved in
\cite{Tian1} that the space of K\"{a}hler metrics in a fixed
K\"{a}hler class can be approximated by a sequence of spaces
consisting of Bergman metrics. The latter spaces are finite
dimensional symmetric spaces. Tian (\cite{Tian97}) then introduced
the K-stability condition using the generalized Futaki invariant
(\cite{DT}) for testing the properness of K-energy on these finitely
dimensional spaces. Later Donaldson \cite{Dol} reformulated it by defining the Futaki invariants 
algebraically (see \eqref{deffut}), which
is now called  {\it the Donaldson-Futaki invariant}. The following
folklore conjecture is the guiding question in this area.
\begin{conj}[Yau-Tian-Donaldson conjecture]\label{YTD}
Let $X$ be a Fano manifold. Then there is a  K\"{a}hler-Einstein metric in $-c_1(X)$ if and only if $(X,
-K_X)$ is K-polystable. \footnote{In the recent remarkable works, Tian and Chen-Donaldson-Sun announced proofs of this conjecture \cite{Tian12, CDSI, CDSII, CDSIII}. Their results also imply Corollary \ref{Tian} below.}
\end{conj}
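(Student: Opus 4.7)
The plan is to address the two implications separately. For the \emph{easy direction}, that existence of a K\"ahler-Einstein metric $\omega_{{\rm KE}}$ implies K-polystability, I would follow the Kempf-Ness picture made rigorous by Paul-Tian: interpret Mabuchi's K-energy functional $\mathcal{M}$ as the Kempf-Ness/log-norm function of the (infinite-dimensional) GIT problem, and identify the asymptotic slope of $\mathcal{M}$ along the one-parameter family of Bergman metrics associated to a test configuration $(\mX,\mL) \to \mathbb{A}^1$ with a positive multiple of $\Fut(\mX,\mL)$. If $\omega_{{\rm KE}}$ exists, then $\mathcal{M}$ is bounded below and proper modulo $\mathrm{Aut}(X,-K_X)$ by Tian's theorem, which forces $\Fut(\mX,\mL)\geq 0$ with equality only when the test configuration is a product — exactly K-polystability. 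The degeneration formula of Tian/Donaldson/Paul-Tian for the CM line bundle is the only nontrivial ingredient.

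For the \emph{hard direction}, I propose a continuity method. Consider the family of twisted equations $\mathrm{Ric}(\omega_t) = t\omega_t + (1-t)\theta$ where $\theta$ is a fixed positive $(1,1)$-form in $c_1(X)$ (or, more effectively, a conic current along a smooth divisor $D \in |-mK_X|$, allowing the method of Jeffres-Mazzeo-Rubinstein). Yau's theorem solves $t=0$ and the implicit function theorem gives openness. For closedness at the supremal parameter $t_\infty$, suppose by contradiction $t_\infty < 1$ and extract a Gromov-Hausdorff limit $(X,\omega_{t_i}) \to (X_\infty, \omega_\infty)$ as $t_i \nearrow t_\infty$. The crucial analytic ingredient is the Donaldson-Sun partial $C^0$ estimate: a uniform lower bound for the Bergman kernel of $-mK_X$ along the continuity path. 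This estimate provides a simultaneous projective embedding of the $(X,\omega_{t_i})$ and of $X_\infty$ into a fixed $\mathbb{P}^N$, which upgrades the analytic GH limit to an algebraic degeneration, and Donaldson-Sun's tangent cone analysis combined with Cheeger-Colding-Tian theory then guarantees that $X_\infty$ is a normal projective $\mathbb{Q}$-Fano variety.

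The result is a test configuration $(\mX,\mL)\to \mathbb{A}^1$ whose central fiber is $X_\infty$. By Theorem~\ref{main}, after a finite base change $\phi$ and further MMP surgeries we may replace it by a \emph{special} test configuration $(\mX^{{\rm s}},-K_{\mX^{{\rm s}}})$ with
\[
\Fut(\mX^{{\rm s}},-K_{\mX^{{\rm s}}}) \le \deg(\phi)\cdot \Fut(\mX,\mL),
\]
strict unless the original family was already a special degeneration. Matching the algebraic $\Fut$ with the analytic derivative of $\mathcal{M}$ along the continuity path yields $\Fut(\mX,\mL)\le 0$, and hence $\Fut(\mX^{{\rm s}},-K_{\mX^{{\rm s}}})\le 0$. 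By K-polystability of $(X,-K_X)$, this forces $\mX^{{\rm s}}$ to be a product configuration, i.e., $X_\infty$ is isomorphic to $X$ up to the action of a one-parameter subgroup of $\mathrm{Aut}(X,-K_X)$ — which contradicts the failure of closedness at $t_\infty$, so the continuity path can be continued to $t=1$.

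The main obstacle is the partial $C^0$ estimate together with its upgrade to a genuine algebraic degeneration with $\mathbb{Q}$-Fano central fiber: without this step, one cannot produce an object on which $\Fut$ is even defined, let alone invoke Theorem~\ref{main}. The role of Theorem~\ref{main} in this plan is precisely to close the loop between analysis and algebra: K-polystability need only be tested against \emph{special} degenerations, exactly matching the output of the Gromov-Hausdorff limit procedure, which a priori produces an arbitrary klt $\mathbb{Q}$-Fano limit rather than a specific algebraic test configuration of $X$.
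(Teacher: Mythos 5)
You should first note that the statement you were asked to prove is stated in the paper as a \emph{conjecture} (Conjecture \ref{YTD}): the paper offers no proof of it, and the footnote attributes the announced proofs to Tian and Chen--Donaldson--Sun \cite{Tian12, CDSI, CDSII, CDSIII}. The paper's own contribution to this circle of ideas is Theorem \ref{main} and its consequence Corollary \ref{Tian}, namely that K-(semi,poly)stability of a Fano variety need only be tested against special test configurations; this is an ingredient that the external proofs can use, not a proof of the conjecture itself. So there is no ``paper proof'' to compare yours against, and what you have written should be judged as a stand-alone argument.

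As such, your proposal is a reasonable high-level outline of the Tian/Chen--Donaldson--Sun strategy, and you correctly identify where Theorem \ref{main} enters (the Gromov--Hausdorff limit produces a klt $\mathbb{Q}$-Fano degeneration, and the reduction to special test configurations lets K-polystability be applied to exactly that kind of object). But it is not a proof: every genuinely hard step is named rather than established. The partial $C^0$ estimate along the twisted or conical continuity path is precisely the core analytic theorem of \cite{CDSII, CDSIII} and \cite{Tian12}, not a citable black box at the time of this paper; the upgrade from a Gromov--Hausdorff limit $X_\infty$ to an honest test configuration of $X$ with central fiber $X_\infty$ (i.e., exhibiting $X_\infty$ as the limit of $X$ under a one-parameter subgroup in a fixed $\mathbb{P}^N$) requires a separate algebro-geometric argument and is not automatic from the embedding; the slope comparison you invoke must be carried out for the \emph{twisted} (log) Futaki invariant adapted to the path $\mathrm{Ric}(\omega_t)=t\omega_t+(1-t)\theta$, and deducing $\Fut\le 0$ of the limiting degeneration from boundedness properties of the K-energy at the critical parameter is itself delicate; finally, concluding a contradiction from $X_\infty\cong X$ requires showing the equation is solvable at $t_\infty$ and that openness resumes, which involves the automorphism group and is nontrivial. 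Until these steps are supplied, the argument is a plan for a proof, essentially reproducing the architecture of the cited works, rather than a proof.
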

See \cite{Tian2} for more detailed discussion for the
K\"{a}hler-Einstein problem. 

In the following we will recall the definition of K-stability. First we need to define the notion of {\it test configuration}.
\begin{defn}\label{defineTC}
1. Let $X$ be an $n$-dimensional $\mathbb{Q}$-Fano variety. Assume that $-rK_X$ is
Cartier for some fixed $r\in \mathbb{N}$. A test configuration of $(X, -rK_X)$ consists of
\begin{enumerate}
\item[ $\cdot$]
a variety $\mX^{{\rm tc}}$ with a $\mathbb{G}_m$-action,
\item[ $\cdot$]
a $\mathbb{G}_m$-equivariant ample line bundle $\mL^{{\rm tc}}\rightarrow
\mX^{{\rm tc}}$,
\item[ $\cdot$]
a flat $\mathbb{G}_m$-equivariant map $\pi: (\mX^{{\rm tc}},
\mathcal{L}^{{\rm tc}})\rightarrow \mathbb{A}^1$, where $\mathbb{G}_m$
acts on $\mathbb{A}^1$ by multiplication in the standard way
$(t,a)\to ta$,
\end{enumerate}
such that for any $t\neq 0$, $(\mX^{{\rm tc}}_t,
\mL^{{\rm tc}}_t)$ is isomorphic to $(X, -rK_X)$, where $\mX^{{\rm tc}}_t=\pi^{-1}(t)$ and $\mL^{{\rm tc}}_t= \mL^{{\rm tc}}|_{\mX^{{\rm tc}}_t}$.

\noindent 2.  Fix $r\in \mathbb{Q}_{>0}$. We call $(\mX^{{\rm tc}}, \mL^{{\rm tc}})$ a $\mathbb{Q}$-test configuration of
$(X, -rK_X)$ if $\mathcal{L}^{{\rm tc}}$ is a $\mathbb{Q}$-Cartier
divisor class on $\mX^{{\rm tc}}$ such that for some integer $m\ge 1$,
$(\mX^{{\rm tc}},m\mL^{{\rm tc}})$ yields a test configuration of $(X, -mrK_X)$.
\end{defn}

Obviously we can rescale the polarization of any test configuration to obtain a $\mathbb{Q}$-test configuration of $(X, -K_X)$.

Similarly to the notion of $\mathbb{Q}$-Fano family, we have the following definition.
\begin{defn}A normal $\mathbb{Q}$-test configuration $(\mathcal{X}^{{\rm tc}},\mathcal{L}^{{\rm tc}})$ of $(X,-K_{X})$ is called {\it a special $\mathbb{Q}$-test configuration} if $\mathcal{L}^{{\rm tc}}\sim_{\mathbb{Q}}-K_{\mX^{{\rm tc}}}$ and $\mX_0^{{\rm tc}}$ is a $\mathbb{Q}$-Fano variety. Whenever there is no ambiguity, we will also abbreviate it as a special test configuration.
\end{defn}

For any $\mathbb{Q}$-test configuration, we can define {\it the Donaldson-Futaki
invariant}. First  by Riemann-Roch theorem,  for sufficiently divisible $k\in\mathbb{N}$, we have
\[
d_k=\dim H^0(X, \mathcal{O}_X(-kK_{X}))=a_0k^n+a_1k^{n-1}+O(k^{n-2})
\]
for some rational numbers $a_0$ and $a_1$. Let
$(\mX_0^{{\rm tc}},\mL_0^{{\rm tc}})$ be the restriction of $(\mX^{{\rm tc}},
\mL^{{\rm tc}})$ over $\{0\}$. Since $\mathbb{G}_m$ acts on
$(\mX_0^{{\rm tc}},\mL^{tc\otimes k}_0)$, $\mathbb{G}_m$ also acts on
$H^0(\mX_0^{{\rm tc}},k\mL^{{\rm tc}}_0)$. We denote the total
weight of this action by $w_k$. By the equivariant Riemann-Roch
Theorem,
\[
w_k=b_0k^{n+1}+b_1k^{n}+O(k^{n-1}).
\]
So we can expand
\[
\frac{w_k}{kd_k}=F_0+F_1k^{-1}+O(k^{-2}).
\]
\begin{defn}[\cite{Dol}]\label{defineDFtc}
The (normalized) Donaldson-Futaki invariant (DF-invariant) of the
$\mathbb{Q}$-test configuration $(\mX^{{\rm tc}}, \mL^{{\rm tc}})$ is defined to be
\begin{equation}\label{deffut}
\Fut(\mX^{{\rm tc}}
,\mL^{{\rm tc}})=-\frac{F_1}{a_0}=\frac{a_1b_0-a_0b_1}{a_0^2}
\end{equation}
\end{defn}

With the normalization in \eqref{deffut}, we easily see for any  $a\in \mathbb{Q}_{>0}$, 
$\Fut(\mX^{{\rm tc}},\mL^{{\rm tc}})=\Fut(\mX^{{\rm tc}},a\mL^{{\rm tc}})$. 

\begin{rem}\label{history}
We have the following remarks about Donaldson-Futaki invariant.
\begin{enumerate}
\item From the differential geometry side, Ding and Tian \cite{DT} defined
the generalized Futaki invariants by extending the original differential
geometric formula of Futaki \cite{Fut} from smooth manifolds to
normal varieties. On a normal $\mathbb{Q}$-Fano variety, the
differential geometric definition coincides with the above algebraic
definition. This was proved by Donaldson \cite{Dol} in the smooth
case. The calculation via equivariant forms in \cite{Dol} is also
valid in the normal case, because the codimension of singularities
on a normal variety is at least two.

\item In \cite{PauTi} and \cite{PauTi2}, Paul and Tian proved that the
Donaldson-Futaki invariant is the same as the total  $\mathbb{G}_m$-weight of the CM line
bundle, which was introduced to give a GIT formulation
of K-stability. See \cite{FS90,Tian97,PauTi,PauTi2}
for details.
\end{enumerate}
\end{rem}

As we will show in Section \ref{int}, by adding a `trivial fiber' over the point $\infty \in \mathbb{P}^1$, we can compactify a
$\mathbb{Q}$-test configuration $(\mX^{{\rm tc}},\mL^{{\rm tc}})\to
\mathbb{A}^1$  to obtain a polarized generic $\mathbb{Q}$-Fano family $(\bX^{{\rm tc}}/\mathbb{P}^1,\bar{\mL}^{{\rm tc}})$. By comparing the DF invariant and DF invariant,  we have the equality
$$\Fut(\bX^{{\rm tc}}/\mathbb{P}^1,\bar{\mL}^{{\rm tc}})=\Fut(\mX^{{\rm tc}},\mL^{{\rm tc}}),$$
which explains the origin of our terminology.

If we apply our Theorem \ref{main} to this case, it specializes to the following result.

\begin{thm}\label{mainTC}
Let $X$ be a $\mathbb{Q}$-Fano variety and  $(\mX^{{\rm tc}},
\mL^{{\rm tc}})$  a test configuration of
$(X,-K_X)$. We can construct a special test configuration
$(\mX^{{\rm st}},-K_{\mX^{{\rm st}}})$ and a positive integer $m$, such that
$$m\Fut(\mX^{{\rm tc}},\mL^{{\rm tc}})\ge
\Fut(\mX^{{\rm st}},-K_{\mX^{{\rm st}}}). $$ Furthermore, if we assume that
$\mX^{{\rm tc}}$ is normal, then the equality holds for our construction only when
$(\mX^{{\rm tc}},\mL^{{\rm tc}})$ itself is a special test configuration.
\end{thm}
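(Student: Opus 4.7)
The plan is to deduce Theorem \ref{mainTC} from Theorem \ref{main} by first passing from the affine test configuration to a compact family over $\mathbb{P}^1$, applying the main theorem there, and then reading off the special test configuration from the resulting $\mathbb{Q}$-Fano family. Concretely, I would glue $(\mX^{{\rm tc}},\mL^{{\rm tc}})\to\mathbb{A}^1$ with the trivial family $(X\times(\mathbb{P}^1\setminus\{0\}),-K_X\boxtimes\mathcal{O})$ along their isomorphic restrictions over $\mathbb{A}^1\setminus\{0\}$, producing a polarized generic $\mathbb{Q}$-Fano family $(\bar{\mX}^{{\rm tc}},\bar{\mL}^{{\rm tc}})\to\mathbb{P}^1$ with $C^*=\mathbb{P}^1\setminus\{0\}$. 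Using the intersection-theoretic interpretation of the DF invariant promised in Section~\ref{int} and the fact that the new fiber at $\infty$ is smooth and isomorphic to $X$, one gets
\[
\Fut(\bar{\mX}^{{\rm tc}}/\mathbb{P}^1,\bar{\mL}^{{\rm tc}})=\Fut(\mX^{{\rm tc}},\mL^{{\rm tc}}).
\]
Applying Theorem~\ref{main} now yields a finite morphism $\phi:C'\to\mathbb{P}^1$ and a $\mathbb{Q}$-Fano family $(\mX^{{\rm s}},-K_{\mX^{{\rm s}}})\to C'$ with
\[
\Fut(\mX^{{\rm s}}/C',-K_{\mX^{{\rm s}}})\le \deg(\phi)\cdot\Fut(\mX^{{\rm tc}},\mL^{{\rm tc}}).
\]

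The main obstacle, and really the only delicate point, is that a special test configuration must carry a $\mathbb{G}_m$-action, so I need to check that every step in the proof of Theorem~\ref{main} can be performed $\mathbb{G}_m$-equivariantly. The natural $\mathbb{G}_m$-action on $\mX^{{\rm tc}}$ extends uniquely to $\bar{\mX}^{{\rm tc}}$ (with the action on the added trivial piece fixing $\infty$), and I would argue that each piece of the proof respects this action: the log canonical modification in Proposition~\ref{lc} is canonical, hence equivariant; the MMP with scaling of Section~\ref{bfpsef} can be run equivariantly because at each step the $\mathbb{G}_m$-action preserves the extremal ray being contracted; the $\mathbb{Q}$-Fano extension of Theorem~\ref{t-dfano} involves taking Proj of an equivariantly constructed canonical ring, again equivariant; finally the base change $\phi$ can be chosen equivariantly as a cyclic cover $\mathbb{P}^1\to\mathbb{P}^1$, $t\mapsto t^m$, totally ramified at $0$ and $\infty$, which is the only feature of the $\mathbb{G}_m$-action we need to preserve. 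Thus $C'\cong\mathbb{P}^1$ carries a $\mathbb{G}_m$-action restricting to the standard one on $\mathbb{A}^1$, and $\mX^{{\rm s}}\to C'$ is $\mathbb{G}_m$-equivariant.

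With equivariance in hand, set $\mX^{{\rm st}}:=\mX^{{\rm s}}|_{C'\setminus\{\infty\}}$, regarded as a family over $\mathbb{A}^1$ via the identification $C'\setminus\{\infty\}\cong\mathbb{A}^1$. Since $\mX^{{\rm s}}$ is isomorphic to $\bar{\mX}^{{\rm tc}}\times_{\mathbb{P}^1}C'$ over $\phi^{-1}(C^*)$, the fiber over $\infty\in C'$ is still isomorphic to $X$, and the general fibers are $X$, while the central fiber $\mX^{{\rm st}}_0$ is $\mathbb{Q}$-Fano with $-K_{\mX^{{\rm st}}}\sim_{\mathbb{Q}}\mathcal{L}^{{\rm st}}$; this is exactly a special test configuration. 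Because the fiber of $\mX^{{\rm s}}$ over $\infty$ is smooth isomorphic to $X$ (contributing trivially to the defining intersection numbers), the comparison in Section~\ref{int} between the compactified DF invariant and the algebraic DF-invariant gives
\[
\Fut(\mX^{{\rm st}},-K_{\mX^{{\rm st}}})=\Fut(\mX^{{\rm s}}/C',-K_{\mX^{{\rm s}}})\le m\cdot\Fut(\mX^{{\rm tc}},\mL^{{\rm tc}}),
\]
with $m=\deg(\phi)$, which is the desired inequality.

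For the equality statement, suppose $\mX^{{\rm tc}}$ is normal and equality holds in our construction. Then each inequality in the three-step proof of Theorem~\ref{main} is an equality, so by the equality clauses of Theorems~\ref{t-step1}, \ref{t-step2}, \ref{t-step3} the original family $(\bar{\mX}^{{\rm tc}},\bar{\mL}^{{\rm tc}})\to\mathbb{P}^1$ must itself be a $\mathbb{Q}$-Fano family. In particular $\bar{\mL}^{{\rm tc}}\sim_{\mathbb{Q},\mathbb{P}^1}-K_{\bar{\mX}^{{\rm tc}}}$ and $\mX^{{\rm tc}}_0$ is a $\mathbb{Q}$-Fano variety, so restricting to $\mathbb{A}^1$ shows that $(\mX^{{\rm tc}},\mL^{{\rm tc}})$ was already a special test configuration. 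The checking that normality of $\mX^{{\rm tc}}$ makes the compactified family normal and that the normality is needed exactly to rule out spurious equality coming from a non-normal input is the only subtlety here, and it is direct from the construction of $\bar{\mX}^{{\rm tc}}$.
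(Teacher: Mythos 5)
Your proposal follows essentially the same route as the paper: compactify the test configuration trivially at $\infty$, identify $\Fut(\mX^{{\rm tc}},\mL^{{\rm tc}})$ with the intersection-theoretic DF invariant of $(\bX^{{\rm tc}},\bL^{{\rm tc}})/\mathbb{P}^1$ via Proposition \ref{Intfor}, run the three-step construction of Theorem \ref{main} $\mathbb{G}_m$-equivariantly (equivariant base change and semistable reduction, canonicity of the log canonical modification, equivariant MMP steps, equivariant $\mathbb{Q}$-Fano extension), restrict back to $\mathbb{A}^1$, and settle the equality case by the equality clause of Theorem \ref{main}. The only point you leave implicit---why each contraction and flip in the MMP with scaling is automatically $\mathbb{G}_m$-equivariant---is exactly what the paper supplies, via the connectedness of $\mathbb{G}_m$ giving ${\rm NE}(\mX^i)^{\mathbb{G}_m}={\rm NE}(\mX^i)$ (citing \cite{Ad}) together with the equivariant semistable reduction of Lemma \ref{ssr}, so your outline matches the paper's proof.
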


This corollary will be applied to study K-stability, which we define in the below.
\begin{defn}\label{d-kstable}
 Let $X$ be a $\mathbb{Q}$-Fano variety.
\begin{enumerate}
\item
$X$ is called K-semistable if for any $\mathbb{Q}$-test
configuration $(\mX^{{\rm tc}}, \mL^{{\rm tc}})$ of $(X, -K_X)$, we
have $\Fut(\mX^{{\rm tc}}, \mL^{{\rm tc}})\ge0$.
\item
$X$ is called K-stable (resp. K-polystable) if for any normal
$\mathbb{Q}$-test configuration $(\mX^{{\rm tc}}, \mL^{{\rm tc}})$ of $(X,
-K_X)$, we have $\Fut(\mX^{{\rm tc}}, \mL^{{\rm tc}})\ge 0$, and
the equality holds only if $(\mX^{{\rm tc}}, \mL^{{\rm tc}})$ is trivial (resp. only if
$\mX^{{\rm tc}}\cong X\times \mathbb{A}^1$).
\end{enumerate}
\end{defn}

\begin{rem} We have the following remarks for the definitions of K-stability.
\begin{enumerate}
\item Though the notions of K-stability can be stated for a general singular variety $X$ with $-K_X$ being $\mathbb{Q}$-Cartier and ample, in \cite{Oda1}, Odaka shows that for such a variety, if it is K-semistable, it can only have klt singularities.
\item In the definitions of K-polystability and K-stability, for the triviality of the test configuration with Donaldson-Futaki invariant 0, we require the test configuration to be normal.  This is slightly different with the original definition. However, we believe this should be the right one. For more details, see Section \ref{s-ntc}.  It is a consequence of \cite[5.2]{RT} that we only need to consider normal test configurations for $K$-semistability, too.
\end{enumerate}
\end{rem}

All these notions of test configurations, Donaldson-Futaki invariants and K-(semi,poly)-stability can be defined for a general polarized projective variety $(X,L)$. A more general version of Conjecture \ref{YTD}  predicts the equivalence between the K-polystability of a polarized manifold  $(X,L)$ and the existence of a constant scalar curvature K\"abler metric in $c_1(L)$. 
Nevertheless, in this paper, except in Section \ref{s-ntc}  we mainly consider the notion of K-stability for the K\"{a}hler-Einstein problem on Fano varieties.

In the work \cite{Tian97} where Tian gave the  original definition
of the K-stability in the analytic setting, he only
considered test configurations with normal central fibers. Later he
conjectured that (see \cite{Tianp}), for Fano manifolds, even with Donaldson's
definition, one still only needs to consider those test
configurations with normal central fibers. This is motivated by
compactness results for K\"{a}hler-Einstein manifolds (See
\cite{CCT}). 

As an immediate consequence of Corollary \ref{mainTC}, we verify Tian's conjecture. In fact, it suffices to consider an even smaller class of test configurations, namely, the test configurations whose central fibers are $\mathbb{Q}$-Fano.

\begin{cor}[Tian's conjecture]\label{Tian}
Assume that $X$ is a $\mathbb{Q}$-Fano variety. If $X$ is destablized by
a test configuration, then $X$ is indeed destablized by a special
test configuration. More precisely, the following two statements are
true.
\begin{enumerate}
\item(K-semistability) If $X$ is not K-semi-stable, then there exists a  special $\mathbb{Q}$-test configuration
$(\mX^{{\rm st}}, -K_{\mX^{{\rm st}}})$ with a negative Futaki invariant
$\Fut(\mX^{{\rm st}},-K_{\mX^{{\rm st}}})<0$.
\item (K-polystability) Let $X$ be a K-semistable variety. If $X$ is not K-polystable, then there exists a special $\mathbb{Q}$-test configuration $(\mathcal{X}^{{\rm st}},-K_{\mX^{{\rm st}}})$ with Donaldson-Futaki invariant 0 such that
 $\mX^{{\rm st}}$
is not isomorphic to $X\times\mathbb{A}^1$.
\end{enumerate}
\end{cor}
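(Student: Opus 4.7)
The plan is to deduce Corollary \ref{Tian} as a more-or-less formal consequence of Theorem \ref{mainTC}, the specialization of Theorem \ref{main} to test configurations. Both parts will amount to applying the special-degeneration construction to a given destabilizer, together with the monotonicity of the DF invariant it provides.

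For part (1), assume $X$ is not K-semistable. By Definition \ref{d-kstable} there exists a $\mathbb{Q}$-test configuration $(\mX^{{\rm tc}},\mL^{{\rm tc}})$ of $(X,-K_X)$ with $\Fut(\mX^{{\rm tc}},\mL^{{\rm tc}})<0$. After rescaling $\mL^{{\rm tc}}$ (which does not change the DF invariant), Theorem \ref{mainTC} produces a special test configuration $(\mX^{{\rm st}},-K_{\mX^{{\rm st}}})$ and a positive integer $m$ with
$$\Fut(\mX^{{\rm st}},-K_{\mX^{{\rm st}}})\;\le\;m\,\Fut(\mX^{{\rm tc}},\mL^{{\rm tc}})\;<\;0,$$
yielding the required destabilizing special test configuration. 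No equality analysis is needed here.

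For part (2), assume $X$ is K-semistable but not K-polystable. By Definition \ref{d-kstable} there is a normal $\mathbb{Q}$-test configuration $(\mX^{{\rm tc}},\mL^{{\rm tc}})$ with $\Fut(\mX^{{\rm tc}},\mL^{{\rm tc}})=0$ and $\mX^{{\rm tc}}\not\cong X\times\mathbb{A}^1$. Theorem \ref{mainTC} gives a special test configuration $(\mX^{{\rm st}},-K_{\mX^{{\rm st}}})$ with
$$\Fut(\mX^{{\rm st}},-K_{\mX^{{\rm st}}})\;\le\;m\cdot 0\;=\;0,$$
while K-semistability of $X$ forces $\Fut(\mX^{{\rm st}},-K_{\mX^{{\rm st}}})\ge 0$; hence equality holds. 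Because $\mX^{{\rm tc}}$ is normal, the equality clause of Theorem \ref{mainTC} then forces $(\mX^{{\rm tc}},\mL^{{\rm tc}})$ itself to be a special test configuration, and setting $\mX^{{\rm st}}:=\mX^{{\rm tc}}$ gives the desired non-trivial special test configuration of vanishing DF invariant.

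The main obstacle in this corollary is entirely lodged in Theorem \ref{mainTC}, whose proof requires the log-canonical modification step (Section \ref{lcm}), the MMP-with-scaling step (Section \ref{bfpsef}), and the $\mathbb{Q}$-Fano extension step (Section \ref{atpsef}), together with the identification between the intersection-theoretic DF invariant of Definition \ref{defint} and the weight-theoretic one of Definition \ref{defineDFtc}. Once Theorem \ref{mainTC} is granted, the only delicate point in the present corollary is the use of the equality case in part (2): one must be careful to invoke normality of $\mX^{{\rm tc}}$ (which is built into the definition of K-polystability adopted here) so that the ``only if'' direction of Theorem \ref{mainTC} applies and returns $\mX^{{\rm tc}}$ itself, rather than merely some modification, as the sought-after special test configuration.
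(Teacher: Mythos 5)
Your proof is correct and follows essentially the same route as the paper: part (1) is the immediate consequence of Theorem \ref{mainTC}, and part (2) uses K-semistability to force equality in the DF inequality and then the equality clause (valid because $\mX^{{\rm tc}}$ is normal) to conclude that $\mX^{{\rm tc}}$ itself is the desired non-trivial special test configuration. The only difference is that the paper phrases part (2) contrapositively, which is logically the same argument.
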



\section{Donaldson-Futaki invariant and K-stability}\label{DF}

In this section, we will concentrate on the study of Donaldson-Futaki invariants of a test configuration, which is algebraically defined by Donaldson \cite{Dol}. In the first subsection, we recall the fact that for a  given test configuration $(\mX^{{\rm tc}},\mL^{{\rm tc}})\to \mathbb{A}^1$, its Donaldson-Futaki invariant coincides with the Donaldson-Futaki invariant of the natural compactification  $(\bar {\mX}^{{\rm tc}},\bar{\mL}^{{\rm tc}})\to \mathbb{P}^1$.
This characterization of the Donaldson-Futaki invariants first appears in Wang's work \cite{Wang}. A different proof was also given in \cite{Oda}. In the second subsection, we correct a small inaccuracy in the original definition of K-polystability in literatures.



\subsection{Intersection formula for the Donaldson-Futaki invariant}\label{int}
Given any test configuration $(\mX^{{\rm tc}}, \mL^{{\rm tc}})$, we first
compactify it by gluing $(\mX^{{\rm tc}}, \mL^{{\rm tc}})$ with $(X
\times (\mathbb{P}^1\setminus \{0\}) ,p_1^*L)$. It is known that the
Donaldson-Futaki invariant is equal to the DF invariant on this
compactified space as defined in Definition \ref{defint} (see \cite{Wang,Oda}).  We will present a proof for reader's convenience.

\begin{exmp}
$\mathbb{G}_m$ acts on $(X,L^{-1})=(\mathbb{P}^1,
\mathcal{O}_{\mathbb{P}^1}(-1))$ by
$$t\circ ([Z_0, Z_1], \lambda (Z_0, Z_1))=([Z_0, t Z_1], \lambda (Z_0, t Z_1)).$$ In particular,
the $\mathbb{G}_m$-weights on
$$\mot|_0, \mop|_0, \mot|_\infty \mbox{ and } \mop|_\infty$$
are 0,0,1 and -1. Let $\tau_0=Z_1$, $\tau_\infty=Z_0$ be the
holomorphic sections of $\mop$. Then the $\mathbb{G}_m$-weights of
$\tau_0$ and $\tau_\infty$ are $-1$ and $0$.

Take $\bX=\mathbb{P}(\mop\oplus\mathcal{O}_{\mathbb{P}^1})$ and
$\bL=\mathcal{O}_{\bX}(1)=\mathcal{O}_{D_\infty}$, where $D_\infty$
is the divisor at infinity.  We see that $(\mX^{{\rm tc}}:=\bX\setminus
\mathbb{P}^1_{\infty},\mL^{{\rm tc}}:=\bL|_{\mX^{{\rm tc}}})$ yields a test
configuration of $(X,L)$. Then $H^0(\mathbb{P}^1, L^{\otimes k})$ is
of dimension $d_k=k+1$ and by the calculation in the first paragraph
the total $\mathbb{G}_m$-weight of  $H^0(\mathbb{P}^1, L^{\otimes
k})$ is $w_k=-\frac{1}{2}(k^2+k)$. We  know $D_\infty^2=-1$ and
$K_{\bX}^{-1}\cdot D_\infty=1$. So
$$
w_k=\frac{D_\infty^2}{2}k^2+\left(\frac{K_{\bX}^{-1}\cdot D_\infty
}{2}-1\right)k\qquad \mbox{and}\qquad
\Fut(\mX^{{\rm tc}},\mL^{{\rm tc}})=\frac{D_\infty^2}{2}-\left(\frac{K_{\bX}^{-1}\cdot
D_\infty }{2}-1\right)(=0).
$$

\end{exmp}

This example illustrates general cases (see
\eqref{b0}, \eqref{b1}). In the following we will use Donaldson's argument
(see the proof of Proposition 4.2.1 in
\cite{Dol}) to get the general intersection formula for DF invariant. 

First note that, after identifying the fiber $\mX^{{\rm tc}}_1$ over
$\{1\}$ and $X$, we have an equivariant isomorphism:
\[
(\mX^{{\rm tc}}\backslash\mX^{{\rm tc}}_0, \mL^{{\rm tc}})\cong (X \times (
\mathbb{A}^1\setminus \{0\}), p_1^*L)
\]by $(p,a,s)\to (a^{-1}\circ p,a,a^{-1}\circ s)$.
Therefore, $\mathbb{G}_m$ acts on the right hand side by
\[
t\circ (\{p\}\times \{a\}, s)=(\{ p\} \times \{ta\},  s)
\]
for any $p\in X$, $a\in \mathbb{A}^1$ and $s\in\mL^{{\rm tc}}_p$. The
gluing map is given by
\[
\begin{array}{ccc}
(\mX^{{\rm tc}}, \mL^{{\rm tc}})&&( X\times \mathbb{P}^1\setminus\{0\},p_1^*L)
\\
\bigcup&&\bigcup\\
(\mX^{{\rm tc}}\backslash\mX^{{\rm tc}}_0, \mL^{{\rm tc}})
&\longrightarrow& (X\times (\mathbb{A}^1\setminus\{0\}), p_1^*L)
\\
&&\\
(p,a,s)&\longmapsto& (\{ a^{-1}\circ p\}\times \{ a\}, a^{-1}\circ
s),
\end{array}
\]
where $\mathbb{G}_m$ only acts by multiplication on the factor $
\mathbb{P}^1\setminus\{0\}$ of $( X\times
\mathbb{P}^1\setminus\{0\},p_1^*L)$.

\begin{defn}Using the above gluing map, from a test configuration $({\mX}^{{\rm tc}},\mL^{{\rm tc}})$ of $(X,-rK_X)$, we get a generic $\mathbb{Q}$-Fano family
$\bar{\pi}:
(\bar{\mX}^{{\rm tc}},\frac{1}{r}\bar{\mL}^{{\rm tc}})\rightarrow \mathbb{P}^1$, which we call {\it $\infty$-trivial compactification} of the test configuration.
\end{defn}
 In the
following of this subsection, we will denote
$(\bar{\mX}^{{\rm tc}},\bar{\mL}^{{\rm tc}})$ by $(\bX,\bL)$ for simplicity. Note
that there exists an integer $N$, such that
$\bar{\mathcal{M}}=\bar{\mathcal{L}}\otimes
\bar{\pi}^*(\mathcal{O}_{\mathbb{P}^1}(N\cdot \{\infty\}))$ is ample
on $\bX$ (cf. \cite[1.45]{KM}).

We need the following weak form of the Riemann-Roch formula whose
proof is well known.
\begin{lem} Let $X$ be an $n$-dimensional normal projective variety and $L$ an ample divisor
on $X$ then
\[
\dim H^0(X, L^{\otimes
k})=\frac{L^n}{n!}k^n+\frac{1}{2}\frac{(-K_{X})\cdot
L^{n-1}}{(n-1)!}k^{n-1}+O(k^{n-2}).
\]
\end{lem}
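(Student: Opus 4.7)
The strategy is to reduce to the smooth case via a resolution of singularities and to control all correction terms by their growth rate in $k$. First, since $L$ is ample, Serre vanishing gives $H^i(X, L^{\otimes k}) = 0$ for $i > 0$ and $k \gg 0$, so that $h^0(X, L^{\otimes k}) = \chi(X, L^{\otimes k})$, and it therefore suffices to compute the Euler characteristic.

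Let $\pi : \tilde{X} \to X$ be a resolution of singularities. Because $X$ is normal, $\pi_*\mathcal{O}_{\tilde X} = \mathcal{O}_X$, and by Zariski's Main Theorem the image of the exceptional locus of $\pi$ has codimension at least $2$ in $X$; consequently each $R^i\pi_*\mathcal{O}_{\tilde X}$ ($i \geq 1$) is supported on a closed subscheme of dimension $\leq n-2$. Combining the Leray spectral sequence with the projection formula,
\[
\chi(\tilde X, \pi^*L^{\otimes k}) = \chi(X, L^{\otimes k}) + \sum_{i \geq 1} (-1)^i \chi\!\left(X, R^i\pi_*\mathcal{O}_{\tilde X} \otimes L^{\otimes k}\right),
\]
and a coherent sheaf supported in dimension $\leq n-2$ has Hilbert polynomial of degree $\leq n-2$, so the correction sum contributes only $O(k^{n-2})$.

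On the smooth variety $\tilde X$, Hirzebruch--Riemann--Roch applied to $\pi^*L$ produces the expansion
\[
\chi(\tilde X, \pi^*L^{\otimes k}) = \frac{(\pi^*L)^n}{n!}\,k^n + \frac{1}{2}\,\frac{(-K_{\tilde X}) \cdot (\pi^*L)^{n-1}}{(n-1)!}\,k^{n-1} + O(k^{n-2}).
\]
The projection formula together with $\pi_*K_{\tilde X} = K_X$ as Weil divisor classes on the normal variety $X$ gives $(\pi^*L)^n = L^n$ and $(-K_{\tilde X})\cdot (\pi^*L)^{n-1} = (-K_X)\cdot L^{n-1}$, where the right-hand intersection number is interpreted \emph{\`a la} Mumford for the possibly non-$\mathbb{Q}$-Cartier divisor $K_X$. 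Substituting yields the claimed asymptotic expansion.

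The main obstacle, such as it is, is the bookkeeping that the higher direct images $R^i\pi_*\mathcal{O}_{\tilde X}$ contribute only $O(k^{n-2})$; once one invokes normality of $X$ to force the exceptional image into codimension $\geq 2$, the argument is essentially automatic. (One could streamline further by choosing $\tilde X$ to give a \emph{rational} resolution so that $R^i\pi_*\mathcal{O}_{\tilde X}$ vanish outright for $i \geq 1$, but this sharper input is not needed for the asymptotic statement.)
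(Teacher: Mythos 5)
Your proof is correct. The paper itself offers no argument for this lemma, stating only that it is well known, and your reduction (Serre vanishing to replace $h^0$ by $\chi$, a resolution $\pi:\tilde X\to X$ with $\pi_*\mathcal{O}_{\tilde X}=\mathcal{O}_X$ and higher direct images supported in codimension $\ge 2$, Leray plus Hirzebruch--Riemann--Roch on $\tilde X$, and the projection formula with $\pi_*K_{\tilde X}=K_X$) is exactly the standard one it has in mind. One small remark: the appeal to Mumford-style intersection theory is unnecessary here, since $L$ is Cartier and the number $(-K_X)\cdot L^{n-1}$ is just the degree of the Weil divisor class $K_X$ capped with $c_1(L)^{n-1}$, which is ordinary intersection theory on a normal projective variety.
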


We define
\[
d_k=\dim H^0(X, L^{\otimes k})=:a_0 k^{n}+a_1 k^{n-1}+O(k^{n-2})
\]

\begin{prop}\label{Intfor}
Let $(\mX^{{\rm tc}}, \mL^{{\rm tc}})$ be a test configuration of $(X,-rK_X)$. Assume that $\mX^{{\rm tc}}$ is normal, then
\begin{equation}\label{itft}
\Fut(\mX^{{\rm tc}}, \mL^{{\rm tc}})=\Fut(\bX/\mathbb{P}^1,\bL)
\end{equation}
\end{prop}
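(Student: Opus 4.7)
The plan is to compute the asymptotic expansion of $h^0(\bX, \bar{\mathcal{M}}^{\otimes k})$ in two independent ways, where $\bar{\mathcal{M}} = \bL \otimes \bar\pi^*\mathcal{O}_{\mathbb{P}^1}(N\{\infty\})$ is made ample by choosing $N \gg 0$, and then match coefficients to identify $b_0, b_1$ with intersection numbers on $\bX$. Normality of $\mX^{{\rm tc}}$ is used to make asymptotic Riemann-Roch and the intersection-theoretic $\Fut$ of Definition \ref{defint} available.

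First, since $\bar{\mathcal{M}}$ is ample, Serre vanishing gives $h^0(\bX, \bar{\mathcal{M}}^{\otimes k}) = \chi(\bX, \bar{\mathcal{M}}^{\otimes k})$ for $k \gg 0$, and asymptotic Riemann-Roch yields
$$h^0(\bX, \bar{\mathcal{M}}^{\otimes k}) = \frac{\bar{\mathcal{M}}^{n+1}}{(n+1)!}k^{n+1} + \frac{(-K_{\bX})\cdot\bar{\mathcal{M}}^n}{2 \cdot n!}k^n + O(k^{n-1}).$$
Writing $\bar{\mathcal{M}} = \bL + N\bX_\infty$ and using that $\bX_\infty \sim \bX_0$ is disjoint from $\bX_0$ (so $\bX_\infty^2 = 0$), together with $\bL^j \cdot \bX_\infty = (\bL|_{\bX_\infty})^j = L^j$, one sees that both coefficients are affine-linear in $N$ with constant terms involving only $\bL^{n+1}$ and $(-K_{\bX})\cdot\bL^n$.

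Second, I would push down via $\bar\pi$: Serre vanishing on fibers gives $R^i\bar\pi_*\bar{\mathcal{M}}^{\otimes k} = 0$ for $i > 0$ and $k \gg 0$, and $\bar\pi_*\bar{\mathcal{M}}^{\otimes k}$ is a $\mathbb{G}_m$-equivariant locally free sheaf of rank $d_k$ on $\mathbb{P}^1$. Riemann-Roch on $\mathbb{P}^1$ then gives
$$h^0(\bX, \bar{\mathcal{M}}^{\otimes k}) = d_k + \deg\bar\pi_*\bar{\mathcal{M}}^{\otimes k} = d_k(1 + Nk) + \deg\bar\pi_*\bL^{\otimes k}.$$
For any $\mathbb{G}_m$-equivariant vector bundle on $\mathbb{P}^1$ the degree equals the difference of total $\mathbb{G}_m$-weights on the two fixed-point fibers, with a sign determined by the direction of the action. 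The fiber at $0$ is $H^0(\mX^{{\rm tc}}_0, \mL^{{\rm tc}\otimes k}|_{\mX^{{\rm tc}}_0})$ with total weight $w_k$ by definition, while the fiber at $\infty$ is $H^0(X, L^{\otimes k})$ with trivial action (read off from the gluing description $(X \times (\mathbb{P}^1\setminus\{0\}), p_1^*L)$, where $\mathbb{G}_m$ moves only the $\mathbb{P}^1$-factor), hence total weight $0$. Thus $\deg\bar\pi_*\bL^{\otimes k} = \pm w_k$.

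Equating the two asymptotics and extracting the $k^{n+1}$ and $k^n$ coefficients expresses $b_0$ and $b_1$ in terms of $\bL^{n+1}$, $(-K_\bX)\cdot\bL^n$, and $N$-dependent pieces coming from intersections with $\bX_\infty$. All $N$-dependent pieces come from the trivial fiber and therefore combine with $a_0, a_1$ in the ratio $(a_1 b_0 - a_0 b_1)/a_0^2$ so as to cancel (equivalently, they may be absorbed into the passage from $K_\bX$ to $K_{\bX/\mathbb{P}^1}$). Substituting into Definition \ref{defineDFtc} and reorganizing using $a_1/a_0 = n/(2r)$ together with $L = -rK_X$ reproduces exactly the intersection expression of Definition \ref{defint} applied to $(\bX/\mathbb{P}^1, \bL)$. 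The main obstacle is purely bookkeeping: tracking the sign of $\deg\bar\pi_*\bL^{\otimes k} = \pm w_k$ under the stated $\mathbb{G}_m$-convention, and confirming that all $N$-dependent terms cancel; I would pin down both points by checking the trivial configuration $\mX^{{\rm tc}} = X \times \mathbb{A}^1$, where both invariants visibly vanish.
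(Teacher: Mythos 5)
Your proposal is correct and lands on exactly the same key identity as the paper, namely $w_k=h^0(\bX,\bM^{\otimes k})-(kN+1)d_k$, hence the same expansions \eqref{b0}, \eqref{b1} and the same final substitution into \eqref{deffut}; the difference is only in how that identity is derived. The paper stays upstairs and tensors with the sections $\sigma_0,\sigma_\infty$ of $\bPi^*\mop$, tracking $\mathbb{G}_m$-weights through the two resulting exact sequences; you instead push down to $\mathbb{P}^1$, use $h^0(\bX,\bM^{\otimes k})=\deg\bPi_*\bM^{\otimes k}+\operatorname{rk}$ and the fact that the degree of a $\mathbb{G}_m$-equivariant bundle on $\mathbb{P}^1$ is the difference of total weights at the two fixed fibers. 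These are essentially equivalent (the paper's exact-sequence bookkeeping is in effect a hands-on proof of that equivariant-degree fact in this situation), but your route requires you to justify that $\bPi_*\bL^{\otimes k}$ is locally free with fibers $H^0$ of the fibers (fine for $k\gg 0$ since $\bL$ is $\bPi$-ample, via relative Serre vanishing and cohomology-and-base-change) and to quote the equivariant splitting on $\mathbb{P}^1$, while the paper's argument is self-contained. Two small points to tighten: first, your plan to pin down the sign in $\deg\bPi_*\bL^{\otimes k}=\pm w_k$ by testing on $X\times\mathbb{A}^1$ with trivial action cannot work, since there both sides vanish identically for either sign; fix the sign instead by the convention computation the paper does for $(\mathbb{P}^1,\mop)$ (weight $0$ at $0$, weight $-1$ at $\infty$, degree $1$, so degree equals weight at $0$ minus weight at $\infty$ under $t\cdot a=ta$), or by a product configuration with a nontrivial action. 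Second, for the $N$-linear term in $b_1$ you need $K_{\bX}\cdot\bL^{n-1}\cdot\bX_\infty=K_X\cdot L^{n-1}$, which uses adjunction along $\bX_\infty$ together with $\bX_\infty^2=0$, not just $\bL^{j}\cdot\bX_\infty=L^{j}$; alternatively, note the $N$-cancellation is automatic because $w_k=\deg\bPi_*\bL^{\otimes k}$ is manifestly independent of $N$.
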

\begin{proof}
For $k\gg 0$, by Serre Vanishing Theorem, we have two exact
sequences:
\[
\begin{array}{rcccccl}
&A&&B&&C&\\
&\parallel&&\parallel&&\parallel&\\
 0\longrightarrow& H^0(\bX,\bM^{\otimes k}(-\mX^{{\rm tc}}_0))&\stackrel{\otimes
\sigma_0}{\longrightarrow}& H^0(\bX,\bM^{\otimes
k})&\longrightarrow& H^0(\bX_0,\bM^{\otimes
k}|_{\mX^{{\rm tc}}_0})&\longrightarrow 0
\\
&&&&&&\\
0\longrightarrow& H^0(\bX,\bM^{\otimes
k}(-\bX_\infty))&\stackrel{\otimes\sigma_{\infty}}{\longrightarrow}&
H^0(\bX,\bM^{\otimes k})&\longrightarrow&
H^0(\bX_{\infty},\bM^{\otimes k}|_{\bX_\infty})&\longrightarrow 0,\\
&\parallel&&\parallel&&\parallel&\\
&A&&B&&D&
\end{array}
\]
where $\sigma_0, \sigma_\infty$ are sections of
$\bPi^*\mathcal{O}_{\mathbb{P}^1}(1)$ which are pull back of the
divisors $\{0\}, \{\infty\}$ on $\mathbb{P}^1$.

We can assume that the $\mathbb{G}_m$-weights of $\sigma_0$ and
$\sigma_\infty$ are $-1$ and 0. Note the first terms in the two exact
sequences are the same as $A:=H^0(\bX,\bM^{\otimes
k}\otimes\bPi^*\mathcal{O}_{\mathbb{P}^1}(-1))$. We have the
equation: $w_B=w_A-d_A+w_C=w_A+w_D$, where we denote $d_A$ and $w_A$
to be the dimension and the $\mathbb{G}_m$-weight of the vector
space $A$ and similarly for $d_B$, $w_C$ etc. Since the
$\mathbb{G}_m$-weight of $\mathcal{O}_{\mathbb{P}^1}(1)|_{\infty}$
is -1 and $\mathbb{G}_m$ acts on $\bL|_{\bX_\infty}$ trivially, we
have $w_D=-kN \mbox{dim} H^0(\bX_\infty,\bL^{\otimes
k}|_{\bX_\infty})$. So we get
\[ w_C=d_A+w_D=d_B-d_C-kN d_D=d_B-(kN+1)d_C,
\]
Since $\mathbb{G}_m$ acts trivially on
$\mathcal{O}_{\mathbb{P}^1}(1)|_0$, we get the $\mathbb{G}_m$-weight
on $H^0(\mX^{{\rm tc}}_0,\bM^{\otimes k}|_{\mX^{{\rm tc}}_0})=H^0(\mX^{{\rm tc}}_0,
\mL^{tc\otimes k}|_{\mX^{{\rm tc}}_0})$:
\[
w_k=\dim H^0(\bX,\bM^{\otimes k})- (kN+1)\dim
H^0(\mX^{{\rm tc}}_0,\mL^{tc\otimes k}|_{\mX^{{\rm tc}}_0}).
\]
Expanding $w_k$, we get:
\[
w_k=b_0 k^{n+1}+b_1 k^{n}+O(k^{n-1})
\]
with

\begin{equation}\label{b0}
b_0=\frac{\bM^{n+1}}{(n+1)!}-Na_0=\frac{\bL^{n+1}}{(n+1)!}, \mbox{
and }
\end{equation}
\begin{equation}\label{b1}
b_1=\frac{1}{2}\frac{(-K_{\bX})\cdot\bM^{n}}{n!}-N a_1-a_0=
\frac{1}{2}\frac{(-K_{\bX})\cdot\bL^{n}}{n!}-a_0.
\end{equation}
 By substituting the coefficients into
\eqref{deffut}, we get
\begin{eqnarray*}
\frac{a_1b_0-a_0b_1}{a_0^2}&= &\frac{1}{(n+1)!a_0}\left(\frac{a_1}{a_0}\bL^{n+1}+\frac{n+1}{2}K_{\bar{\mX}}\cdot\bL^{n}\right)+1\\
&=&\frac{1}{(n+1)r^n(-K_X)^n}\left(\frac{n}{2r}\bL^{n+1}+\frac{n+1}{2}K_{\bar{\mX}}\cdot\bL^{n}\right)+1\\
&=&\frac{1}{2(n+1)(-K_X)^n}\left(n (\frac{1}{r}\bL)^{n+1}+(n+1)K_{\bar{\mX}/\mathbb{P}^1}\cdot(\frac{1}{r}\bL)^{n}\right)\\
&=& \Fut(\bar\mX/\mathbb{P}^1,\bar\mL).
\end{eqnarray*}
\end{proof}

\begin{rem}\begin{enumerate}
\item
As the above proof shows, Donaldson's formula of Futaki invariant in
the toric case (Proposition 4.2.1 in \cite{Dol}) is a special
example of the intersection formula.
This intersection formula is also related to the interpretation of
Donaldson-Futaki invariant as the CM-weight in \cite{PauTi}. 
\item
When $(\mX^{{\rm tc}},\mathcal{L}^{{\rm tc}})\to \mathbb{A}^1$ is a test
configuration, where we only assume $\mL^{{\rm tc}}$ to be relative big
and semi-ample $\mathbb{Q}$-line bundle, this definition of
Donaldson-Futaki invariant using intersection numbers
$\Fut(\bX^{{\rm tc}}/\mathbb{P}^1,\bL^{{\rm tc}})$  still coincides with
the definition via computing the $\mathbb{G}_m$-weights of
cohomological groups as in \cite{ALV}. For more details, see
\cite{RT} and \cite{ALV}.
\end{enumerate}
\end{rem}

\subsection{Normal test configuration}\label{s-ntc}
It follows from \cite[5.1]{RT} or \cite[3.9]{ALV} that if $n:(\mX',n^*\mL)\to (\mX,\mL)$ is a finite morphism between test configurations of a polarized projective variety $(X,L)$ which is an isomorphism over $\mathbb{C}^*$, then
$$ \Fut(\mX',\mL')\le  \Fut (\mX,\mL),  $$
and the equality holds if and only if $n$ is an isomorphism in codimension 1. 
So to make the definition of K-polystablity reasonable, we need to identify the test configurations which are isomorphic in codimension 1. Alternatively, when $X$ is normal (resp. $S_2$), we could only consider test configurations $\mX$ which are normal (resp. $S_2$) as in Definition \ref{d-kstable}.

In below, we give an easy example which shows that pathological nontrivial test configurations, which are trivial in codimension 1, do occur: 
\begin{exmp} Let $(X, L)=(\mathbb{P}^1, \mathcal{O}_{\mathbb{P}^1}(3))$. Consider the test configuration 
$$\mathcal{X} \subset \mathbb{P}^3\times \mathbb{A}^1=\mathbb{P}(x,y,z,w)\times {\rm Spec}\  k[a]$$ 
given by
$$I=(a^2(x+w)w-z^2,ax(x+w)-yz,xz-ayw,y^2w-x^2(x+w)).$$
(cf. \cite[III.9.8.4]{Ha}). The $\mathbb{G}_m$ action on it is just
sending
$$\mathcal{X}\times \mathbb{G}_m\to \mathcal{X}: (x,y,z,w;a)\times\{ t\} \to (x,y,tz,w;at).$$
Then for $a=0$, the special fiber has the ideal
$$I_0=(z^2,yz,xz,y^2w-x^2(x+w)).$$ Geometrically, $\mX^{{\rm tc}}_t$ is a cubic curve in $\mathbb{P}^3$. They degenerate to the special fiber $\mX^{{\rm tc}}_0$
which is a plane nodal cubic curve in
$\mathbb{P}^2=\mathbb{P}(x,y,w)$ with an embedded point at
$(0,0,0,1)$.

For $k\gg 0$, we have
$$h^0(\mathbb{P}^1,kL)=h^0(\mathbb{P}^1,\mathcal{O}_{\mathbb{P}^1}(3k))=3k+1,\qquad \mbox{and}\qquad H^0(\mathcal{X}_0, k\mL_0)=V_1\oplus V_2,$$ where
$$V_1\cong
H^0(\mX_0^{\rm red}, \mathcal{O}_{\mathbb{P}(x,y,w)}(k)|_{\mX_0^{\rm
red}})$$ and $V_2$ is the one dimensional space spanned by $z\cdot
w^{k-1}$ (or $z\cdot f(x,y,w)$ for any homogeneous polynomial of
degree $k-1$ such that $f(0,0,1)\neq 0$). As the total weight of
$V_1$ is 0 and the total weight of $V_2$ is 1, we conclude that
$b_0=b_1=0$.

\end{exmp}

In fact, it is easy to see in general such pathological test configuration exists for any polarized variety $(X,L)$.

\begin{rem}
\begin{enumerate}
\item 
  In \cite{St}, J. Stoppa claimed a proof of the K-stability of varieties with K\"ahler-Einstein metric  under the original definition, namely without assuming the normality of the test configuration. However, he made a mistake on the calculation of the Donaldson-Futaki invariant for the `degenerate case'.
More precisely, the formula (3.7) of the proof of Proposition 3.3 in
\cite{St} is false because multiplying sections $H^0(\mX^{\rm
red}_0,\mathcal{L}^{k-1}|_{\mX^{\rm red}_0})$ by a nilpotent element
is not always an injection in general. 


There is also a similar overlooking in \cite{PauTi2}. Corollary 2
there said that the properness of K-energy implies Donaldson-Futaki
invariant is positive for any test configuration. The case missing
in their proof in section 3.3 is when $\lim_{t\rightarrow 0}{\rm
Osc}(\phi_t)<\infty$ and the central fibre is generically reduced as
the above example shows.

\item As far as we can see, in most of the published literature, including \cite{RT}, \cite{Oda1} and \cite{Oda2}, the same arguments of proving the
results
on K-stability for certain classes of varieties work, once we
replace the definition there by our new definition. More precisely,
for any nontrivial {\it normal} test configuration $\mX^{{\rm tc}}$, there
is a semi-test configuration $\mY^{{\rm tc}}$ with equivariant morphisms
$p:\mY^{{\rm tc}}\to \mX^{{\rm tc}}$ and $q:\mY^{{\rm tc}}\to X\times \mathbb{A}^1$
such that $q$ is not the trivial morphism. Therefore, $q$ gives an
exceptional divisor $E$ over $X\times \mathbb{A}^1$. Then their
calculations can be carried out by using this exceptional divisor.
\end{enumerate}
\end{rem}

\section{Proof of Theorem \ref{mainTC}}

To prove Theorem \ref{mainTC}, now we only need to check that if we start with a $\infty$-trivial compactification of a test configuration as in Section \ref{int},  the families in theorems \ref{t-step1}, \ref{t-step2} and \ref{t-step3} can be also constructed to be $\infty$-trivial compactifications of test configurations.   

\subsection{Equivariant Semi-stable reduction}

The following result, whose proof is a simple combination of the equivariant resolution (see e.g. \cite[3.9.1]{Ko} and reference therein) and the semistable reducetion (see \cite{KKMS}), is well known. However, we can not find it in the literature. Hence we include a short argument here. 
A similar statement  for resolution appears in \cite{ALV}.
\begin{lem}\label{ssr}
Let $f:\mX\to \mathbb{A}^1$ be a dominant morphism from a normal
variety with a $\mathbb{G}_m$-action such that $f$ is
$\mathbb{G}_m$-equivariant. Then there exists a base change $z^m:
\mathbb{A}^1\to \mathbb{A}^1$ and a semistable family $\mY$ over
$\mathbb{A}^1$ with a morphism $\pi: \mathcal{Y}\to \mX\times_{\mathbb{A}^1, z^m}\mathbb{A}^1
$ which is a log resolution of $(\tilde{\mX},\tilde{\mX}_0)$ where
$\tilde{\mX}$ is the normalization of  $\mX\times_{\mathbb{A}^1,z^m}\mathbb{A}^1$.
\end{lem}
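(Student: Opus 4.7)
The plan is to combine two well-known tools in an equivariant way: equivariant log resolution of singularities and the semistable reduction theorem of Kempf-Knudsen-Mumford-Saint-Donat. First I would apply $\mathbb{G}_m$-equivariant log resolution (available in characteristic zero; see the references cited in \cite{Ko}) to obtain a $\mathbb{G}_m$-equivariant projective birational morphism $\pi_1 : \mY_1 \to \mX$ such that $\mY_1$ is smooth and $(\pi_1 \circ f)^{-1}(0)_{\rm red}$ is a simple normal crossing divisor, while $\pi_1$ is an isomorphism over the regular locus of $(\mX,\mX_0)$. Because $\mathbb{G}_m$ is connected and reductive, one may run Hironaka-style resolution equivariantly by blowing up the (automatically $\mathbb{G}_m$-invariant) canonically defined centers.

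Next, I would run semistable reduction on $f \circ \pi_1 : \mY_1 \to \mathbb{A}^1$. The KKMS procedure is toroidal: after a suitable cyclic base change $z^m : \mathbb{A}^1 \to \mathbb{A}^1$, the normalization $\widetilde{\mY_1}$ of $\mY_1 \times_{\mathbb{A}^1, z^m} \mathbb{A}^1$ carries a natural toroidal structure along the preimage of $0$, and a combinatorial subdivision of its polyhedral cone complex yields a toroidal modification $\mY \to \widetilde{\mY_1}$ such that $\mY \to \mathbb{A}^1$ is semistable. Because the $\mathbb{G}_m$-action on $\mY_1$ lifts canonically to $\widetilde{\mY_1}$ (the original $\mathbb{G}_m$-action on $\mathbb{A}^1$ via the weight-$m$ character commutes with the torus action on the new base), and because every choice made in the KKMS toroidal subdivision depends only on intrinsic combinatorial data of the dual complex, all subsequent modifications automatically commute with the torus action. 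Composing then gives the desired map $\pi : \mY \to \widetilde{\mX}$, where $\widetilde{\mX}$ is the normalization of $\mX \times_{\mathbb{A}^1, z^m} \mathbb{A}^1$.

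To verify that $\pi$ is a log resolution of $(\widetilde{\mX}, \widetilde{\mX}_0)$, I would use that $\pi_1$ is already a log resolution of $(\mX, \mX_0)$ and that normalized base change along $z^m$ together with a toroidal modification of an snc model preserves the property that the total space is smooth and the central fiber (together with exceptional divisors) is snc; this is exactly the content of the toroidal geometry part of \cite{KKMS}.

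The main obstacle is the book-keeping needed to ensure that \emph{every} step — equivariant resolution, base change, normalization, and toroidal subdivision — is compatible with the $\mathbb{G}_m$-action, so that the final $\pi$ is $\mathbb{G}_m$-equivariant (with respect to the lifted action on the base). This boils down to two facts: the centers in any functorial resolution are $\mathbb{G}_m$-invariant, and KKMS toroidal subdivisions depend only on intrinsic cone-complex data. Both are standard, so the argument is short once these two points are invoked correctly.
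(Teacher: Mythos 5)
Your argument is correct and follows essentially the same route as the paper: first an equivariant log resolution, then the KKMS base change, normalization and toroidal modification, with equivariance guaranteed because the components of the central fiber (hence the toroidal strata and the centers of the toroidal blow-ups) are invariant under the connected group $\mathbb{G}_m$. The only cosmetic difference is that the paper phrases the equivariance of the toroidal step as successive lifting of the action along $\mathbb{G}_m$-invariant blow-up centers, while you phrase it via the intrinsic cone-complex data; these amount to the same observation.
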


\begin{proof}
 First, we perform the blow up of
$(\mX,\mX_0)$ $\mathbb{G}_m$-equivariantly to get an equivariant log
resolution $\mathcal{Y}^*$. This is always possible by the
theorem of equivariant resolution of singularities.
Then we can take a base change $z^m:\mathbb{A}^1\to \mathbb{A}^1$,
such that the normalization $\tilde{\mathcal{Y}}^{*}$ of $
\mathcal{Y}^*\times_{\mathbb{A}^1,z^m}\mathbb{A}^1$ has a reduced fiber over each point of
$\mathbb{A}^1$. Then it follows from \cite{KKMS} that possibly after a further base change, we can take a
sequence of toroidal blow-ups of $\tilde{\mathcal{Y}}^{*}$ to obtain
a log resolution $\mathcal{Y}$ of
$(\tilde{\mathcal{Y}}^{*},\tilde{\mathcal{Y}}^{*}_0)$ such that $\mathcal{Y}_0$ has reduced
fibers. 

As each component of $\tilde{\mathcal{Y}}^{*}_0$ is $\mathbb{G}_m$-invariant, so are the irreducible components of their intersections.  Since the centers of the toroidal blow-ups are  $\mathbb{G}_m$-invariant,  $\mathbb{G}_m$ action on $\tilde{\mathcal{Y}}^{*}$ can be sucessively lifted to $\mathcal{Y}$.
\end{proof}

\subsection{Proof of Tian's conjecture}
\begin{proof}[Proof of Theorem \ref{mainTC}] It suffices to check that if we start with an $\infty$-trivial compactfication $$(\bar{\mX}^{{\rm tc}},\bar{\mL}^{{\rm tc}})\to \mathbb{P}^1$$ of a $\mathbb{Q}$-test configuration $(\mX^{{\rm tc}},\mL^{{\rm tc}})\to \mathbb{A}^1$, the models we construct in Theorem \ref{t-step1}, \ref{t-step2} and \ref{t-step3} are all $\infty$-trivial compacitifcations of $\mathbb{Q}$-test configurations.

Since resolution of singularities and semistable reduction can be obtained $\mathbb{G}_m$-equivariantly (see Lemma \ref{ssr}), starting from an $\infty$-trivial compactfication $(\bar{\mX}^{{\rm tc}},\bar{\mL}^{{\rm tc}})\to \mathbb{P}^1$, we can perform a base change $d:\mathbb{P}^1\to \mathbb{P}^1$ such that $\bar{\mX}^{{\rm tc}}_d:=\bar{\mX^{{\rm tc}}}\times_{d,\mathbb{P}^1}\mathbb{P}^1\to \mathbb{P}^1$ via the second projection admits a $\mathbb{G}_m$-equivariant semi-stable reduction $\pi:\mY\to \bar{\mX}^{{\rm tc}}_d$.

Then the log canonical modification 
$$\mX^{{\rm lc}}={\rm Proj} R(\mathcal{Y}/\bar{\mX}^{{\rm tc}}_d, K_{\mathcal{Y}})$$
admits a $\mathbb{G}_m$-action such that $\pi^{{\rm lc}}:\mX^{{\rm lc}}_d\to \bar{\mX}^{{\rm tc}}_d$, which is isomorphism over $\mathbb{P}^1\setminus \{0\}$, is equivariant.  Thus the polarization 
$$\mL^{{\rm lc}}=(\pi^{{\rm lc}})^*d^*\bar{\mL}^{{\rm tc}}+tK_{\mX^{{\rm lc}}}$$
for sufficiently small $t>0$ also clearly admits a compatible  $\mathbb{G}_m$-action.

Now we run a relative $K_{\mX^{{\rm lc}}}$-MMP with scaling of $\mL^{{\rm lc}}$ over $\mathbb{P}^1$.  Each step is indeed automatically $\mathbb{G}_m$-equivariant.
In fact, assuming this is true after the $i$-th step, since $\mathbb{G}_m$ is connected, then ${\rm NE}(X^i)^{\mathbb{G}_m}={\rm NE}(X^i)$ (see the proof of \cite[1.5]{Ad}). Hence the contraction is $\mathbb{G}_m$-equivariant. As the flip is a ${\rm Proj}$ of a $\mathbb{G}_m$-equivariant algebra, it also admits a $\mathbb{G}_m$-action. Therefore, at the end $\mX^{{\rm ac}}$ is $\mathbb{G}_m$-equivariant. So $(\mX^{{\rm ac}},-K_{\mX^{{\rm ac}}})$ is a $\infty$-trivial compactfication of the associated test configuration.

Similarly the process involved in the proof of Theorem \ref{t-step3} can be proceeded $\mathbb{G}_m$-equivariantly so that  the special $\mathbb{Q}$-Fano family we obtain at the end yields a special test configuration. We leave the details to the reader.
\end{proof}

\begin{proof}[Proof of Corollary \ref{Tian}]
The semistability case follows from Theorem \ref{main} immediately.

Now we assume that $(X,-rK_X)$ is K-semistable and all special test
configurations $(\mX^{{\rm st}},\mL^{{\rm st}})$ with $\Fut(\mX^{{\rm st}},
\mL^{{\rm st}})=0$ satisfy that $\mX^{{\rm st}}\cong X\times \mathbb{A}^1$. Let
$(\mX^{{\rm tc}},\mL^{{\rm tc}})$ be an arbitrary normal test configuration whose DF invariant is 0. Applying Theorem \ref{main}, we obtain a special test configuration $(\mX^{{\rm st}},-K_{\mX^{{\rm st}}})$ of $(X,-rK_X)$ and inequalities
$$0\le
\Fut(\mX^{{\rm st}},-rK_{\mX^{{\rm st}}})\le m\Fut(\mX^{{\rm tc}}, \mL^{{\rm tc}})=0.$$ Then
since the equality  holds,  by the conclusion of Theorem \ref{main} we know that
$(\mX^{{\rm tc}},\mL^{{\rm tc}})$ is a special test configuration, which implies
that $\mX^{{\rm tc}}\cong X\times \mathbb{A}^1$.
\end{proof}

We finish our paper by the following remark.
\begin{rem}We are inspired by Odaka's algebraic proof of K-stability of canonically polarized variety and Calabi-Yau variety \cite{Oda2}, which provides the counterpart of our theory  for the case when $K_X$ is ample or trivial.
\end{rem}

\textbf{Acknowledgement}: We learnt the inspiring conjecture, which
is answered affirmatively  in Corollary \ref{Tian}, from Professor
Gang Tian. We would like to thank him for many helpful comments and
constant encouragement. The second author is indebted to Yuji Odaka
for his inspiring work \cite{Oda1} and useful discussions, to
Christopher Hacon on his crucial suggestion on proving the results
in Subsection \ref{sub-c}, to Dan Abramovich and J\'anos Koll\'ar on their helpful remarks. We also would like to thank Burt Totaro, Richard
Thomas, Xiaowei Wang and the anonymous referees for their comments which help us to improve the exposition.
 Both authors thank
the Mathematics Institute at Oberwolfach where the joint work started. The second author is partially supported by the Chinese grant `The Recruitment Program of Global Experts'.

\noindent
Department of Mathematics, Princeton University, Princeton, NJ 08544, USA

\noindent 
Current address: Mathematics Department, Stony Brook University, Stony Brook NY, 11794, USA

\noindent
{\sl Email address: chi.li@stonybrook.edu}
\vspace{8mm}

\noindent 
Beijing International Center of Mathematics Research, 5 Yiheyuan
Road, Haidian District, Beijing, 100871, China

\noindent 
{\sl E-mail address: cyxu@math.pku.edu.cn}


\begin{thebibliography}{999999}

\bibitem[Adr01]{Ad}
 Andreatta, M.; Actions of linear algebraic groups on projective manifolds and minimal model program. Osaka J. Math. {\bf 38} (2001), no. 1, 151--166.


\bibitem[Ale94]{Ale94}
 Alexeev, V.,
 Moduli spaces $M_{g,n}(W)$ for surfaces. Higher-dimensional complex varieties (Trento, 1994), 1-22, de Gruyter, Berlin, 1996.

\bibitem[ALV11]{ALV}
Arezzo, C., La Nave, G. and Della Vedova, A.: Singularities and
K-semistability. Int. Mat. Res. Not. 2012, no. {\bf 4}, 849-869.

\bibitem[Art91]{Ar}
 Artin, M. Algebraization of formal moduli. II. Existence of modifications. Ann. of Math. (2) { \bf 91}, 1970, 88-135.

\bibitem[BCHM10]{BCHM}Birkar, C., Cascini, P., Hacon, C., McKernan, J.: Existence
of minimal models for varieities of log general type. J. Amer. Math. Soc., vol {\bf 23}, No. 2, 2010, 405-468.

\bibitem[BHPV04]{BHPV}Barth, W., Hulek, K., Peters, C., Ven, A. van de:
Complex Compact Surfaces.  Second edition. Ergebnisse der Mathematik und ihrer Grenzgebiete. 3. Folge. A Series of Modern Surveys in Mathematics, {\bf 4}. Springer-Verlag, Berlin, 2004. 

\bibitem[CCT02]{CCT}
Cheeger, J., Colding, T.H., Tian, G.: On the
singularities of spaces with bounded Ricci curvature. Geom. Funct.
Anal. Vol {\bf 12}, no.5 (2002) 873-914.

\bibitem[CDS12a]{CDSI}
Chen, X., Donaldson, S., Sun, S.; 
K\"ahler-Einstein metrics on Fano manifolds, I: approximation of metrics with cone singularities. arXiv:1211.4566.


\bibitem[CDS12b]{CDSII}
Chen, X., Donaldson, S., Sun, S.; 
K\"ahler-Einstein metrics on Fano manifolds, II: limits with cone angle less than $2\pi$.  arXiv:1212.4714. 

\bibitem[CDS13]{CDSIII}
Chen, X., Donaldson, S., Sun, S.; 
K\"ahler-Einstein metrics on Fano manifolds, III: limits as cone angle approaches $2\pi$ and completion of the main proof.  arXiv:1302.0282. 




\bibitem[DM69]{DM69}
Deligne, P.; Mumford, D. The irreducibility of the space of curves of given genus. Inst. Hautes \'Etudes Sci. Publ. Math. No. {\bf 36}, (1969), 75-109.

\bibitem[Don02]{Dol}
Donaldson, S.K.: Scalar curvature and stability of
toric varieties. J. Differential Geometry {\bf 62} (2002), 289-349.

\bibitem[DT92]{DT}
Ding, W., Tian, G.: K\"{a}hler-Einstein metrics and the
generalized Futaki invariant. Invent. Math. {\bf 110} (1992), no. 2,
315-335.

\bibitem[FS90]{FS90}
Fujiki, A., Schumacher, G.;
The moduli space of extremal compact K\"ahler manifolds and generalized Weil-Petersson metrics. 
Publ. Res. Inst. Math. Sci. {\bf 26} (1990), no. 1, 101-183. 

\bibitem[Fut83]{Fut}
Futaki, A.: An obstruction to the existence of Einstein
K\"{a}hler metrics, Invent. Math., {\bf 43}, 437-443 (1983).

\bibitem[Har77]{Ha}
Hartshorne, R., Algebraic Geometry. Graduate Texts in Mathematics, No. {\bf 52}. Springer-Verlag, New York-Heidelberg, 1977.

\bibitem[HP10]{HP10}
 Hacking, P.; Prokhorov, Y.; Smoothable del Pezzo surfaces with quotient singularities. Compos. Math. {\bf 146} (2010), no. 1, 169-192.

\bibitem[HM07]{HM07}
Hacon, C.; M$^c$Kernan, J.; On Shokurov's rational connectedness conjecture. Duke Math. J. {\bf 138} (2007), no. 1, 119-136.



\bibitem[HX09]{HoXu}Hogadi, A., Xu, C.; Degenerations of rationally
connected varieties. Transactions of American Mathematical Society,
Volume {\bf 361}, No. 7, 3931-3949 (2009).




\bibitem[KKMS73]{KKMS}
Kempf, G., Knudsen, F., Mumford, D., Saint-Donat, B.:
Toroidal Embeddings I, Springer, LNM {\bf 339}, 1973.

\bibitem[KM98]{KM}
Koll\'{a}r, J., Mori, S.: Birational geometry of algebraic
varieties. Cambridge Tracts in Mathematics, {\bf 134}. Cambridge University Press, 1998.


\bibitem[Kol07a]{Kol07}
Koll\'ar, J.;
A conjecture of Ax and degenerations of Fano varieties.
Israel J. Math. {\bf 162} (2007), 235-251.

\bibitem[Kol07b]{Ko}
Koll\'ar, J.; Lectures on resolution of singularities. Annals of Mathematics Studies, {\bf 166}. Princeton University Press, Princeton, NJ, 2007.

\bibitem[Kol13]{Kol}
Koll\'ar, J.;
Moduli of varieties of general type. {\it Handbook of Moduli: Volume II},  115-130, Adv. Lect. Math. (ALM) {\bf 24}, Int. Press, Somerville, MA, 2013.

\bibitem[KSB88]{KSB}
 Koll\'ar, J., Shepherd-Barron, N. I.; Threefolds and deformations of surface singularities. Invent. Math. {\bf 91} (1988), no. 2, 299-338.

\bibitem[Lai11]{Lai}
Lai, C., Varieties fibered by good minimal models. Math. Ann. Vol. {\bf 350}, Issue 3 (2011), 533-547.


\bibitem[Mab86]{Mab}
Mabuchi, T.:  K-energy maps integrating Futaki
invariants. Tohoku Math. J. (2) Vol {\bf 38}, No. 4 (1986), 575-593.

\bibitem[Mum61]{Mum}
 Mumford, D. ; The topology of normal singularities of an algebraic surface and a criterion for simplicity. Inst. Hautes \'Etudes Sci. Publ. Math. No. {\bf 9} (1961), 5-22.


\bibitem[Oda12]{Oda2}Odaka, Y., The Calabi conjecture and K-stablity. Int. Math. Res. Not. 2012, no. {\bf 10}, 2272-2288. 

\bibitem[Oda13a]{Oda1}Odaka, Y., 
The GIT stability of polarized varieties via discrepancy.
Ann. of Math. (2) {\bf 177} (2013), no. 2, 645-661. 


\bibitem[Oda13b]{Oda}Odaka, Y., A generalization of Ross-Thomas' slope
theory. Osaka J. Math. {\bf 50} (2013), 171-185.


\bibitem[OX12]{OX}Odaka, Y.;  Xu, C.;
Log-canonical modification of singular pairs and its applications.  Math. Res. Lett. {\bf 19} (2012), no. 2, 325-334.

\bibitem[PP10]{PP10}
 Pasquier, B.; Perrin, N.;  Local rigidity of quasi-regular varieties. Math. Z. {\bf 265} (2010), no. 3, 589-600.

\bibitem[PT06]{PauTi}Paul, S., Tian, G.: CM stability and the generalized
Futaki invariant I. arXiv: 0605278.

\bibitem[PT09]{PauTi2}Paul, S., Tian, G.: CM stability and the
generalized Futaki invariant II. Ast\'{e}risque No. {\bf 328}, 339-354.

\bibitem[RT07]{RT}
Ross, J., Thomas, R.: A study of the Hilbert-Mumford
criterion for the stability of projective varieties. J. Algebraic
Geom. {\bf 16} (2007), 201-255.

\bibitem[Sto09]{St}
Stoppa, J.,  K-stability of constant scalar curvature K\"ahler manifolds. Adv. Math. {\bf 221} (2009), no. 4, 1397-1408.

\bibitem[Tia90]{Tian1}Tian, G.: On a set of polarized K\"{a}hler metrics on
algebraic manifolds. J. Differential Geometry, {\bf 32} (1990), 99-130.

\bibitem[Tia97]{Tian97}
Tian, G.: K\"{a}hler-Einstein metrics with positive
scalar curvature. Invent. math. {\bf 137} (1997), 1-37.

\bibitem[Tia12a]{Tian2}
Tian, G.: Existence of Einstein metrics on Fano manifolds. {\it Metric and Differential Geometry. The Jeff Cheeger Anniversary Volume}, 119-159.
Progress in Mathematics {\bf  297}, Birkh\"auser/Springer Basel AG, Basel, 2012.

\bibitem[Tia12b]{Tian12}
Tian, G.; K-stability and K\"ahler-Einstein metrics.  arXiv:1211.4669.

\bibitem[Tian]{Tianp}
Tian, G.: Private communication.

\bibitem[Wan12]{Wang}
Wang, X.; Height and GIT weight. Math. Res. Lett. 19 (2012), no. 4, 909-926. 

\bibitem[Zha06]{Zhang06}
Zhang Q., Rational connectedness of log $\mathbb{Q}$-Fano varieties, J. Reine Angew. Math. {\bf 590} (2006), 131-142.

\end{thebibliography}
\end{document}